\newif\ifarxiv\arxivfalse
\numberwithin{equation}{section}
\DeclarePairedDelimiter\floor{\lfloor}{\rfloor}
 \DeclareMathOperator*{\argmin}{arg\,min}
 \newcommand\innprod[2]{\left\langle{}#1{},{}#2{}\right\rangle}%
 \newcommand\func[3]{#1:#2\rightarrow#3}
 \newcommand\E{\mathbb E}%
 \newcommand\R{\mathbb R}%
 \newcommand\A{\mathcal{A}_H^{p}}%
 \newcommand\PS{\psi_{\bar x, H}^p}%
 \newcommand\br{\beta_\rho}
 \newcommand\prox{\mathrm{prox}_{F/H}^p}%
 \newcommand\dom{\mathrm{dom}}%
 \newcommand\dist{\mathrm{dist}}%
 \newcommand\set[1]{\left\{{}#1{}\right\}} 
 \newcommandx\seq[3][{2=k\geq 0},{3={}}]{\{#1\}_{#2}^{#3}}
 \newtheorem{thm}{Theorem}
 \numberwithin{thm}{section}
 \newtheorem{lem}[thm]{Lemma}
 \newtheorem{exa}[thm]{Example}
 \newtheorem{defin}[thm]{Definition}
    \title{{\bf High-order methods beyond the classical complexity bounds, I: inexact high-order proximal-point methods}}
    \author{
    {Masoud Ahookhosh\footnote{Department of Mathematics, University of Antwerp, Middelheimlaan 1, B-2020 Antwerp, Belgium.
			E-mail: masoud.ahookhosh@uantwerp.be}} \qquad  {Yurii Nesterov\footnote{Center for Operations Research and Econometrics (CORE) and Department of Mathematical Engineering (INMA), Catholic University of Louvain (UCL), 34voie du Roman Pays, 1348 Louvain-la-Neuve, Belgium. E-mail: Yurii.Nesterov@uclouvain.be\\ This project has received funding from the European Research Council (ERC) under the European Union's Horizon 2020 research and innovation programme (grant agreement No. 788368).}}
    }
    \providecommand{\keywords}[1]{\textbf{Keywords:} #1}
\begin{document}
    \maketitle
     \begin{abstract}
         In this paper, we introduce a \textit{Bi-level OPTimization} (BiOPT) framework for minimizing the sum of two convex functions, where both can be nonsmooth. The BiOPT framework involves two levels of methodologies. At the upper level of BiOPT, we first regularize the objective by a $(p+1)$th-order proximal term and then develop the generic inexact high-order proximal-point scheme and its acceleration using the standard estimation sequence technique. At the lower level, we solve the corresponding $p$th-order proximal auxiliary problem inexactly either by one iteration of the $p$th-order tensor method or by a lower-order non-Euclidean composite gradient scheme with the complexity $\mathcal{O}(\log \tfrac{1}{\varepsilon})$, for the accuracy parameter $\varepsilon>0$. Ultimately, if the accelerated proximal-point method is applied at the upper level, and the auxiliary problem is handled by a non-Euclidean composite gradient scheme, then we end up with a $2q$-order method with the convergence rate $\mathcal{O}(k^{-(p+1)})$, for $q=\floor{p/2}$, where $k$ is the iteration counter. 
     \end{abstract}
    
     \keywords{Convex composite optimization, High-order proximal-point operator, Bi-level optimization framework, Lower complexity bounds, Optimal methods, Superfast methods}
    
 \else
    \begin{document}
    \journalname{Mathematical Programming}
    \title{High-order methods beyond the classical complexity bounds, I: inexact high-order proximal-point methods}
    \author{%
		Masoud Ahookhosh \and
		Yurii Nesterov%
	}
	\institute{
	M. Ahookhosh
		\at
			Department of Mathematics, University of Antwerp, Middelheimlaan 1, B-2020 Antwerp, Belgium.\\
			\email{masoud.ahookhosh@uantwerp.be}%
	\and 
	Y. Nesterov 
	    \at
              Center for Operations Research and Econometrics (CORE) and Department of Mathematical Engineering (INMA), Catholic University of Louvain (UCL), 34voie du Roman Pays, 1348 Louvain-la-Neuve, Belgium. \\
              \email{Yurii.Nesterov@uclouvain.be} \\          %
              This project has received funding from the European Research Council (ERC) under the European Union's Horizon 2020 research and innovation programme (grant agreement No. 788368).
    }
 \date{}
    \maketitle
    \begin{abstract}
        In this paper, we introduce a \textit{Bi-level OPTimization} (BiOPT) framework for minimizing the sum of two convex functions, where both can be nonsmooth. The BiOPT framework involves two levels of methodologies. At the upper level of BiOPT, we first regularize the objective by a $(p+1)$th-order proximal term and then develop the generic inexact high-order proximal-point scheme and its acceleration using the standard estimation sequence technique. At the lower level, we solve the corresponding $p$th-order proximal auxiliary problem inexactly either by one iteration of the $p$th-order tensor method or by a lower-order non-Euclidean composite gradient scheme with the complexity $\mathcal{O}(\log \tfrac{1}{\varepsilon})$, for the accuracy parameter $\varepsilon>0$. Ultimately, if the accelerated proximal-point method is applied at the upper level, and the auxiliary problem is handled by a non-Euclidean composite gradient scheme, then we end up with a $2q$-order method with the convergence rate $\mathcal{O}(k^{-(p+1)})$, for $q=\floor{p/2}$, where $k$ is the iteration counter. 
        \keywords{Convex composite optimization\and High-order proximal-point operator\and Bi-level optimization framework\and Lower complexity bounds\and Optimal methods\and Superfast methods}
    \end{abstract}
\fi

\section{Introduction}\label{sec:intro}

\paragraph{{\bf Motivation.}}\label{sec:motiv}
Central to the entire discipline of convex optimization is the concept of complexity analysis for evaluating the efficiency of a wide spectrum of algorithms dealing with such problems; see  \cite{nemirovsky1983problem,nesterov2018lectures}. For example, under the Lipschitz smoothness of the gradient of the objective function, the fastest convergence rate for first-order methods is of order $\mathcal{O}(k^{-2})$ for the iteration counter $k$; cf. \cite{ahookhosh2017optimal,ahookhosh2018solving,nesterov2005smooth,nesterov2013gradient}. Likewise, if the objective is twice differentiable with Lipschitz continuous Hessian, the best complexity for second-order methods is of order $\mathcal{O}(k^{-7/2})$; see \cite{arjevani2019oracle}. In the recent years, there is an increasing interest to applying high-order methods for both convex and nonconvex problems; see, e.g., \cite{agarwal2018lower,arjevani2019oracle,birgin2017worst,gasnikov2019optimal,jiang2019optimal}. If the objective is $p$-times differentiable with Lipschitz continuous $p$th derivatives, then the fastest convergence rate for $p$th-order methods is of order $\mathcal{O}(k^{-(3p+1)/2})$; cf. \cite{arjevani2019oracle}. 

In general, for convex problems, the classical setting involves a one-to-one correspondence between the methods and problem classes. In other words, there exists and unimprovable complexity bound for a class of methods applied to a class of problems. In fact, under the Lipschitz (H\"older) continuity of the $p$th derivatives, the $p$th-order methods is called {\it optimal} if it attains the convergence rate $\mathcal{O}(k^{-(3p+1)/2})$, and if a method attains a faster convergence rate (under stronger assumptions than the optimal methods), we call it {\it superfast}. For example, first-order methods with the convergence rate $\mathcal{O}(k^{-2})$ and second-order methods with the convergence rate $\mathcal{O}(k^{-7/2})$ are optimal under the Lipschitz (H\"older) continuity of the first and the second derivatives, respectively. Recently, in \cite{nesterov2020superfast}, a {\it superfast second-order method} with the convergence rate $\mathcal{O}(k^{-4})$ has been presented, which is faster than the classical lower bound $\mathcal{O}(k^{-7/2})$. The latter method consists of an implementation of a third-order tensor method where its auxiliary problem is handled by a Bregman gradient method requiring second-order oracles, i.e., this scheme is implemented as a second-order method. We note that this method assumes the Lipschitz continuity of third derivatives while the classical second-order methods apply to problems with Lipschitz continuous Hessian. This clearly explains that the convergence rate $\mathcal{O}(k^{-4})$ for this method is not a contradiction with classical complexity theory for second-order methods. 

One of the classical methods for solving optimization problems is the {\it proximal-point} method that is given by
\begin{equation}\label{eq:proxMethod}
    x_{k+1} = \argmin_{x} \set{h(x)+\tfrac{1}{2\lambda}\|x-x_k\|^2},
\end{equation}
for the function $h(\cdot)$, a given point $x_k$, and $\lambda>0$. The first appearance of this algorithm dated back to 1970 in the works of Martinet \cite{martinet1970breve,martinet1972determination}, which is further studied by Rockafellar \cite{rockafellar1976monotone} when $\lambda$ is replaced by a sequence of positive numbers $\seq{\lambda_k}$. Since its first presentation, this algorithm has been subject of great interest in both Euclidean and non-Euclidean settings, and many extensions has been proposed; for example see \cite{ahookhosh2019bregman,bauschke2016descent,bolte2018first,guler1992newproximal,iusem1994entropy,teboulle1992entropic}. 

Recently, Nesterov in \cite{nesterov2020inexact} proposed a {\it bi-level unconstrained minimization} (BLUM) framework by defining a novel high-order proximal-point operator using a $p$th-order regularization term 
\begin{equation*}\label{eq:prox0}
    \mathrm{prox}_{h/H}^p(\bar x)=\argmin_{x\in E} \set{h(x)+\tfrac{H}{p+1} \|x-\bar x\|^{p+1}},
\end{equation*}
see Section \ref{sec:inexact} for more details. This framework consists of two levels, where the upper level involves a scheme using the high-order proximal-point operator, and the lower-level is a scheme for solving the corresponding proximal-point minimization inexactly. Therefore, one has a freedom of choosing the order $p$ of the proximal-point operator and can also choose a proper method to approximate the solution of the proximal-point auxiliary problem. Applying this framework to twice smooth unconstrained problems with $p=3$, using an accelerated third-order method at the upper level, and solving the auxiliary problem by a Bregman gradient method lead to a second-order method with the convergence rate $\mathcal{O}(k^{-4})$.  The main goal of this paper is to extend the results of \cite{nesterov2020inexact} onto the composite case.

\vspace{-4mm}
\subsection{Content}\label{sec:content}
In this paper, we introduce a {\it Bi-level OPTimization} (BiOPT) framework that is an extension of the BLUM framework (see \cite{nesterov2020inexact}) for the convex composite minimization. 
In our setting, the objective function is the sum of two convex functions, where both of them can be nonsmooth. At the first step, we regularize the objective function by a power of the Euclidean norm $\|\cdot\|^{p+1}$ with $p\geq 1$, following the same vein as \eqref{eq:proxMethod}. The resulted mapping is called {\it high-order proximal-point operator}, which is assumed to be minimized approximately at a reasonable cost. If the first function in our composite objective is smooth enough, in Section~\ref{sec:inexact}, we show that this auxiliary problem can be inexactly solved by one step of the $p$th-order tensor method (see Section \ref{sec:ptensor}). Afterwards, we show that the plain proximal-point method attains the convergence rate $\mathcal{O}(k^{-p})$ (see Section \ref{sec:ihopp}),  while its accelerated counterpart obtains the convergence rate $\mathcal{O}(k^{-(p+1)})$ (see Section \ref{sec:aihopp}). 

We next present our bi-level optimization framework in Section~\ref{sec:biLevel}, which opens up entirely new ground for developing highly efficient algorithms for simple constrained and composite minimization problems. In the upper level, we can choose the order $p$ of the proximal-point operator and apply both plain and accelerated proximal-point schemes using the estimation sequence technique. We then assume that the differentiable part of the proximal-point objective is smooth and strongly convex relative to some scaling function (see \cite{bauschke2016descent,lu2018relatively}) and then design a non-Euclidean composite gradient algorithm using a Bregman distance to solve this auxiliary problem inexactly. It is shown that the latter algorithm will be stopped after $\mathcal{O}(\log \tfrac{1}{\varepsilon})$ of iterations, for the accuracy parameter $\varepsilon>0$. Hence, choosing a lower-order scaling function for the Bregman distance, there is a possibility to apply lower-order schemes for solving the auxiliary problem that will lead to lower-order methods in our convex composite setting. 

Following our BiOPT framework, we finally pick a constant $p$ for the $p$th-order proximal-point operator and apply the accelerated method to the composite problem at the upper level. Then, we introduce a high-order scaling function and show that the differentiable part of the proximal-point objective is $L$-smooth and $\mu$-strongly convex relative to this scaling function, for $L, \mu>0$. We consequently apply the non-Euclidean composite gradient method to the auxiliary problem, which only needs the $p$th-order oracle for even $p$ and the $(p-1)$th-order oracle for odd $p$. Therefore, we end up with a high-order method with the convergence rate of order $\mathcal{O}(k^{-(p+1)})$ under some suitable assumptions. We emphasize while this convergence rate is faster than the classical lower bound $\mathcal{O}(k^{-(3p-2)/2})$ for $p=3$, it is sub-optimal for other choices of $p$. However, we show that our method can overpass the classical optimal rates for some class of structured problems. We finally deliver some conclusion in Section~\ref{sec:conclusion}.

\subsection{Notation and generalities} \label{sec:notation}
In what follows, we denote by $\E$ a finite-dimensional real vector space and by $\E^*$ its dual spaced composed by linear functions on $\E$. For such a function $s\in \E^*$, we denote by $\innprod{s}{x}$ its value at $x\in \E$.

Let us measure distances in $\E$ and $\E^*$ in a Euclidean norm. For that, using a self-adjoint positive-definite operator $\func{B}{\E}{\E^*}$ (notation $B=B^* \succ 0$), we define
\begin{align*}
    \|x\|=\innprod{Bx}{x}^{1/2},\quad x\in \E,\quad \|g\|_{*} = \innprod{g}{B^{-1}g}^{1/2}, \quad g\in \E^*.
\end{align*}
Sometimes, it will be convenient to treat $x\in \E$ as a linear operator from $\R$ to $\E$, and $x^*$ as a linear operator from $\E^*$ to $\R$. In this case, $xx^*$ is a linear operator from $\E^*$ to $\E$, acting as follows:
\begin{align*}
    (xx^*)g = \innprod{g}{x} x\in\E,\quad g\in\E^*.
\end{align*}
For a smooth function $\func{f}{\E}{\R}$ denote by $\nabla f(x)$ its gradient, and by $\nabla^2 f(x)$ its Hessian evaluated at the point $x\in \E$. Note that
\begin{align*}
    \nabla f(x)\in\E^*, \quad \nabla^2 f(x)h\in\E^*, \quad x,h\in\E.
\end{align*}
We denote by $\ell_{\bar x}(\cdot)$ the linear model of convex function $f(\cdot)$ at point $\bar x\in\E$ given by
\begin{equation}
    \ell_{\bar x}(x) = f(\bar x)+\innprod{\nabla f(\bar x)}{x-\bar x}, \quad x\in\E.
\end{equation}
Using the above norm, we can define the standard Euclidean prox-functions
\begin{align*}
    d_{p+1}(x)=\tfrac{1}{p+1}\|x\|^{p+1}, \quad x\in\E.
\end{align*}
where $p\geq 1$ is an integer parameter. These functions have the following derivatives:
\begin{equation}\label{eq:dp1Der}
    \begin{array}{rcl}
        \nabla d_{p+1}(x) &= &\|x\|^{p-1}Bx, \quad x\in\E, \\
        \nabla^2 d_{p+1}(x) &= &\|x\|^{p-1}B+(p-1)\|x\|^{p-3}Bxx^*B \succeq \|x\|^{p-1}B. 
    \end{array}
\end{equation}
Note that function $d_{p+1}(\cdot)$ is uniformly convex (see, for example, \cite[Lemma 4.2.3]{nesterov2018lectures}):
\begin{equation}\label{eq:dp1LowerBound}
    d_{p+1}(y) \geq d_{p+1}(x)+\innprod{d_{p+1}(x)}{y-x}+\tfrac{1}{p+1} \left(\tfrac{1}{2}\right)^{p-1} \|y-x\|^{p+1}, \quad x,y\in \E.
\end{equation}

In what follows, we often work with directional derivatives. For $p\geq 1$, denote by 
\begin{align*}
    D^pf(x)[h_1,\ldots,h_p]
\end{align*}
the directional derivative of function $f$ at $x$ along directions $h_i\in\E$, $i= 1,...,p$. Note that $D^pf(x)[\cdot]$ is a symmetric $p$-linear form. Its norm is defined in a standard way:
\begin{equation}\label{eq:dpfhiNorm}
    \|D^pf(x)\| = \max_{h_1,\ldots,h_p} \set{\left|D^pf(x)[h_1,\ldots,h_p]\right| ~:~ \|h_i\|\leq 1,~ i=1,\ldots,p}.
\end{equation}
If all directions $h_1,\ldots,h_p$ are the same, we apply the notation
\begin{align*}
    D^pf(x)[h]^p,\quad h\in\E.
\end{align*}
Note that, in general, we have (see, for example, \cite[Appendix 1]{nesterov1994interior})
\begin{equation}\label{eq:dpfhpNorm}
    \|D^pf(x)\| = \max_{h} \set{\left|D^pf(x)[h]^p\right| ~:~ \|h\|\leq 1}.
\end{equation}
In this paper, we work with functions from the problem classes $\mathcal{F}_p$, which are convex and $p$ times continuously differentiable on $\E$. Denote by $M_p(f)$ its uniform upper bound for its $p$th derivative:
\begin{equation}\label{eq:mpf}
    M_p(f)=\sup_{x\in\E} \|D^pf(x)\|.
\end{equation}

\section{Inexact high-order proximal-point methods}\label{sec:inexact}
Let function $\func{f}{\E}{\R}$ be closed convex and possibly non-differentiable and let $\func{\psi}{\E}{\R}$ be a simple closed convex function such that $\dom \psi\subseteq \mathrm{int}(\dom f)$. We now consider the convex composite minimization problem
\begin{equation}\label{eq:prob}
    \min_{x\in\dom \psi}~\set{F(x)=f(x)+\psi(x)},
\end{equation}
where it is assumed that \eqref{eq:prob} has at least one optimal solution $x^*\in\dom \psi$ and $F^*=F(x^*)$. This class of problems is general enough to cover many practical problems from many application fields like signal and image processing, machine learning, statistics, and so on. In particular, for the simple closed convex set $Q\subseteq\E$, the simple constrained problem
\begin{equation}\label{eq:conProb}
    \begin{array}{ll}
        \min & f(x) \\
        \mathrm{s.t.} & x\in Q
    \end{array}
\end{equation}
can be rewritten in the form \eqref{eq:prob}, i.e.,
\begin{equation}\label{eq:compProb}
        \min_{x\in\dom \psi}~ f(x)+\delta_Q(x),
\end{equation}
where $\delta_Q(\cdot)$ is the indicator function of the set $Q$ given by
\begin{align*}
    \delta_Q(x)=\left\{
    \begin{array}{ll}
        0       &~~\mathrm{if}~x\in Q,  \\
        +\infty &~~\mathrm{if}~x\not\in Q.
    \end{array}
    \right.
\end{align*}

Let us define the \textit{$p$th-order composite proximal-point operator}
\begin{equation}\label{eq:prox}
    \prox(\bar x)=\argmin_{x\in\dom \psi} \set{\PS(x)=f(x)+\psi(x)+H d_{p+1}(x-\bar x)},
\end{equation}
for $H>0$ and $p\geq 1$, which is an extension of the $p$th-order proximal-point operator given in \cite{nesterov2020inexact}. Moreover, if $p=1$, it reduces to the classical proximal operator
\begin{align*}
    \prox\bar x) = \argmin_{x\in\dom \psi} \set{f(x)+\psi(x)+\tfrac{H}{2} \|x-\bar x\|^2}.
\end{align*}
Our main objective is to investigate the global rate of convergence of high-order proximal-point methods in accelerated and non-accelerated forms, where we approximate the proximal-point operator \eqref{eq:prox} and study the complexity of such approximation. To this end, let us introduce the set of \textit{acceptable solutions} of \eqref{eq:prox} by
\begin{equation}\label{eq:A}
     \A(\bar x,\beta)= \set{(x,g)\in\dom \psi\times\E^* ~:~ 
g\in\partial \psi(x), \; \|\nabla f_{\bar x, H}^p(x)+g\|_*\leq \beta \|\nabla f(x)+g\|_*},
\end{equation}
where
\begin{equation}\label{eq:f}
    f_{\bar x, H}^p(x)=f(x)+H d_{p+1}(x-\bar x),
\end{equation}
where $\beta\in[0,1)$ is the tolerance parameter. 
Note that if $\psi\equiv 0$, then the set $ \A(\bar x,\beta)$ leads to {\it inexact acceptable solutions} for the problem \eqref{eq:prox}, which was recently studied for smooth convex problems in \cite{nesterov2020inexact}. 
Let us emphasize that extending the definition of inexact acceptable solutions from \cite{nesterov2020inexact} for nonsmooth functions is not a trivial task because not all subgradients $g\in\partial \psi(x)$ satisfy the inequality \eqref{eq:A}. In the more general setting of the composite minimization, we address this issue in Section~\ref{sec:nepgm} using a non-Euclidean composite gradient scheme that suggests which subgradient $g\in\partial \psi(x)\neq \emptyset$ can be explicitly used in \eqref{eq:A}.

Since function $F(\cdot)$ is convex and $d_{p+1}(\cdot)$ is uniformly convex, the minimization problem \eqref{eq:prox} has a unique solution that we assume to be computable at reasonable cost. Let us first see how the exact solution of \eqref{eq:prox} satisfies \eqref{eq:A}. The first-order optimality conditions for \eqref{eq:prox} ensure that
\begin{align*}
   H \|T-\bar x\|^{p-1}B(\bar x-T)- \nabla f(T) \in\partial \psi(T).
\end{align*}
Thus, for $g=H \|T-\bar x\|^{p-1}B(\bar x-T)- \nabla f(T)$, the inequality in \eqref{eq:A} holds with any $\beta\in[0,1)$, i.e., $\prox(\bar x),g)\in \A(\bar x,\beta)$. Furthermore, since $\nabla f_{\bar x, H}^p(\bar x)=\nabla f(\bar x)$, we have $(\bar x,g)\not\in\A(\bar x,\beta)$ except if $\bar x = x^*$. In the next subsection, we show that an acceptable approximation of the operator \eqref{eq:prox} can be computed by applying one step of the $p$th-order tensor method (see \cite{nesterov2019implementable}) satisfying \eqref{eq:A}, while a lower-order method will be presented in Section \ref{sec:nepgm}. Let us highlight here that we are not able to find an inexact solution in the sense of \eqref{eq:A} for all points $\bar x$ in a neighbourhood of the solution $x^*$; however its exact solution always satisfies this inequality. We study this in the following example.

\vspace{-1mm}
\begin{exa}
    Let us consider the minimization of function $\func{f}{\R}{\R}$ given by $f(x)=x$ over the set $Q=\set{x\in\R ~:~x\geq 0}$, where $x^*=0$ is its unique solution. The indicator function of the set $Q$ is given by $\func{\psi}{\R}{\R}$ that is
    \begin{align*}
        \psi(x) = \delta_Q(x) = \left\{
        \begin{array}{ll}
            0       & ~~\mathrm{if}~ x\geq 0, \\
            +\infty & ~~\mathrm{if}~ x<0,
        \end{array}
        \right.
    \end{align*}
    where its subdifferential is given by
    \begin{align*}
        \partial \psi(x) = \left\{
        \begin{array}{ll}
            (-\infty,0]       & ~~\mathrm{if}~ x=0, \\
            \{0\} & ~~\mathrm{if}~ x>0,\\
            \emptyset & ~~\mathrm{if}~ x<0.
        \end{array}
        \right.
    \end{align*}
    Let us set $H=1, B=1, p=3, \bar x \neq 0$. Hence, $f_{\bar x, H}^3(x)=x+\tfrac{1}{4} |x-\bar x|^4$ that for $x\geq 0$ and $g\in\partial \psi(x)$ yield $\|\nabla f_{\bar x, H}^3(x)+g\|_*=|1+g+(x-\bar x)^3|,~ \|\nabla f(x)+g\|_*=|1+g|$. Therefore, for $\beta\in[0,1)$, the inequality $\|\nabla f_{\bar x, H}^3(T)+g\|_*\leq \beta\|\nabla f(T)+g\|_*$ leads to $|1+g+(T-\bar x)^3|\leq \beta|1+g|$, i.e.,
    \begin{align*}
        \bar x-\sqrt[3]{1+g+\beta|1+g|} \leq T \leq \bar x-\sqrt[3]{1+g-\beta|1+g|},\quad T\geq 0.
    \end{align*}
    It is clear that there is no $T>0$ (i.e., $g=0$) such that the right-hand-side inequality holds if we have $\bar x <\sqrt[3]{1-\beta}$ (see Subfigure~(a) of Figure~\ref{fig:fig1}). In this case, only the exact solution $T=0$ of the auxiliary problem satisfies the inequality \eqref{eq:A}.
    Indeed, $(T,g)\in\A(\bar x,\beta)$ if we have
    \begin{align*}
        T \in \left\{
        \begin{array}{ll}
            \left[\bar x-\sqrt[3]{1+g+\beta|1+g|},\bar x-\sqrt[3]{1+g-\beta|1+g|}\right] & ~~\mathrm{if}~ \bar x-\sqrt[3]{1+g+\beta|1+g|}\geq 0, \vspace{1mm}\\
            \left[0,\bar x-\sqrt[3]{1+g-\beta|1+g|}\right]     & ~~\mathrm{if}~ \bar x-\sqrt[3]{1+g+\beta|1+g|}<0,
        \end{array}
        \right.
    \end{align*}
    which we illustrate in Subfigure~(b) of Figure~\ref{fig:fig1} for some special choices of $\beta$ and $\bar x$.
    
    \begin{figure}[ht]
        \subfloat[\vspace{-2mm} No acceptable solutions for $\bar x=0.6$.]{\includegraphics[width= 7.4cm]{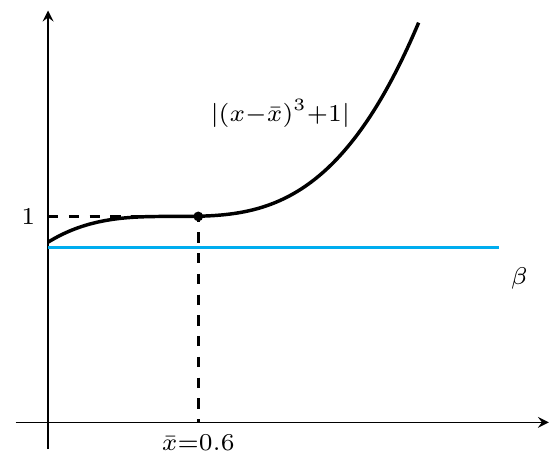}}\qquad
        \subfloat[\vspace{-2mm} Acceptable solutions for $\bar x=1.4$.]{\includegraphics[width= 7.4cm]{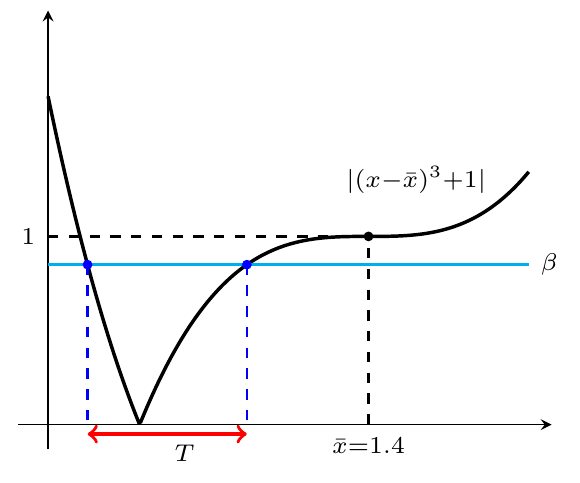}}\\
        \vspace{-1mm}
        \caption{
        Subfigure (a) shows that for $\bar x <\sqrt[3]{1-\beta}$, only the exact solution of the auxiliary problem satisfies \eqref{eq:A}, and Subfigure (b) illustrates the set of solutions for $\bar x=1.4$ and $\beta=0.85$ satisfying $\bar x\geq \sqrt[3]{1+\beta}$.\label{fig:fig1}}
    \end{figure}
\end{exa}

We first present the following lemma, which is a direct consequence of the definition of acceptable solutions \eqref{eq:A}.

\begin{lem}[properties of acceptable solutions]\label{lem:solProp}
    Let $(T,g)\in  \A(\bar x,\beta)$ for some $g\in\partial\psi(T)$. Then, we have
    \begin{equation}\label{eq:solProp1}
        (1-\beta)\|\nabla f(T)+g\|_* \leq H \|T-\bar x\|^p \leq (1+\beta)\|\nabla f(T)+g\|_*,
    \end{equation}
    \begin{equation}\label{eq:solProp2}
        \innprod{\nabla f(T)+g}{\bar x-T} \geq \tfrac{H}{1+\beta} \|T-\bar x\|^{p+1}.
    \end{equation}
    If additionally $\beta\leq\tfrac{1}{p}$, then
    \begin{equation}\label{eq:solProp3}
        \innprod{\nabla f(T)+g}{\bar x-T} \geq \left(\tfrac{1-\beta}{H}\right)^{1/p} \|\nabla f(T)+g\|_*^{\tfrac{p+1}{p}}.
    \end{equation}
\end{lem}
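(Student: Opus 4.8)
The plan is to reduce all three estimates to a single squared form of the defining inequality of $\A(\bar x,\beta)$. Write $v=\nabla f(T)+g$ and, using \eqref{eq:dp1Der}, $w=H\|T-\bar x\|^{p-1}B(T-\bar x)$, so that by \eqref{eq:f} one has $\nabla f_{\bar x,H}^p(T)+g=v+w$ and the membership $(T,g)\in\A(\bar x,\beta)$ reads $\|v+w\|_*\le\beta\|v\|_*$. Since $B=B^*\succ0$, three identities drive everything: $\|w\|_*=H\|T-\bar x\|^p$, $\innprod{w}{\bar x-T}=-H\|T-\bar x\|^{p+1}$, and $\innprod{v}{B^{-1}w}=-H\|T-\bar x\|^{p-1}\innprod{v}{\bar x-T}$. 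For \eqref{eq:solProp1} I would just apply the triangle inequality to $\|v+w\|_*\le\beta\|v\|_*$, which gives $\|w\|_*\le(1+\beta)\|v\|_*$ and $(1-\beta)\|v\|_*\le\|w\|_*$; substituting $\|w\|_*=H\|T-\bar x\|^p$ is exactly \eqref{eq:solProp1}.

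The core step is to square the acceptability inequality. Expanding $\|v+w\|_*^2=\|v\|_*^2+2\innprod{v}{B^{-1}w}+\|w\|_*^2$, inserting the identities above, and rearranging $\|v+w\|_*^2\le\beta^2\|v\|_*^2$ yields the key estimate
\begin{equation*}
  2H\|T-\bar x\|^{p-1}\innprod{v}{\bar x-T}\ge(1-\beta^2)\|v\|_*^2+H^2\|T-\bar x\|^{2p}. \tag{$\star$}
\end{equation*}
Then \eqref{eq:solProp2} follows by bounding the right-hand side of $(\star)$ from below through $\|v\|_*\ge H\|T-\bar x\|^p/(1+\beta)$, taken from \eqref{eq:solProp1}: the two terms combine to $\tfrac{2}{1+\beta}H^2\|T-\bar x\|^{2p}$, and dividing by $2H\|T-\bar x\|^{p-1}$ gives \eqref{eq:solProp2}.

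For \eqref{eq:solProp3} I would view the right-hand side of $(\star)$, divided by $2H\|T-\bar x\|^{p-1}$, as a function of $t=\|T-\bar x\|$, namely $\varphi(t)=\tfrac{(1-\beta^2)\|v\|_*^2}{2H}t^{-(p-1)}+\tfrac{H}{2}t^{p+1}$, so that $\innprod{v}{\bar x-T}\ge\varphi(t)$. By \eqref{eq:solProp1} the feasible $t$ satisfy $t\ge t_{\min}:=\left(\tfrac{(1-\beta)\|v\|_*}{H}\right)^{1/p}$, and since $\varphi$ is convex on $(0,\infty)$ it is enough to verify $\varphi'(t_{\min})\ge0$; a direct computation gives $\varphi'(t_{\min})=\|v\|_*(1-p\beta)$, which is nonnegative precisely because $\beta\le\tfrac1p$. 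Hence $\varphi$ is nondecreasing on $[t_{\min},\infty)$, so $\innprod{v}{\bar x-T}\ge\varphi(t_{\min})$, and evaluating $\varphi$ at $t_{\min}$ collapses its two terms to $\left(\tfrac{1-\beta}{H}\right)^{1/p}\|v\|_*^{(p+1)/p}$, which is \eqref{eq:solProp3}. I expect the identity $\varphi'(t_{\min})=\|v\|_*(1-p\beta)$ to be the crux, since it is the only place where $\beta\le\tfrac1p$ is used and it explains why the naive chaining of \eqref{eq:solProp1} and \eqref{eq:solProp2} (which loses a factor $\tfrac{1-\beta}{1+\beta}$) must be replaced by this boundary analysis of $(\star)$.
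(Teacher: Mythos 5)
Your proposal is correct and follows essentially the same route as the paper: triangle inequality for \eqref{eq:solProp1}, squaring the acceptability inequality and bounding the resulting right-hand side via \eqref{eq:solProp1} for \eqref{eq:solProp2}, and the same boundary analysis of the one-variable function (the paper's $\zeta$, your $\varphi$) at $t_{\min}=\widehat r$ for \eqref{eq:solProp3}, including the identity $\varphi'(t_{\min})=(1-p\beta)\|v\|_*$. Your version is in fact slightly more complete, since you state explicitly the convexity of $\varphi$ that justifies passing from $\varphi'(t_{\min})\ge 0$ to monotonicity on $[t_{\min},\infty)$, a step the paper leaves implicit.
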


\begin{proof}
    From \eqref{eq:A} and the reverse triangle inequality, we obtain
    \begin{align*}
        \big|H\|T-\bar x\|^p-\|\nabla f(T)+g\|_*\big|\leq \|\nabla f(T)+H\|T-\bar x\|^{p-1}B(T-\bar x)+g\|_* \leq \beta \|\nabla f(T)+g\|_*,
    \end{align*}
    i.e., the inequality \eqref{eq:solProp1} holds. Squaring both sides of the inequality in \eqref{eq:A}, we come to
    \begin{align*}
        \|\nabla f(T)+g\|_*^2+2H\|T-\bar x\|^{p-1} \innprod{\nabla f(T)+g}{B(T-\bar x)} &+H^2\|T-\bar x\|^{2p}
        \leq \beta^2 \|\nabla f(x)+g\|_*^2,
    \end{align*}
    leading to
    \begin{align}
         \innprod{\nabla f(T)+g}{B(\bar x-T)}
         &\geq \tfrac{1-\beta^2}{2H\|T-\bar x\|^{p-1}} \|\nabla f(T)+g\|_*^2+\tfrac{H}{2}\|T-\bar x\|^{p+1} \label{eq:solProp31}\\
         &\geq \tfrac{H(1-\beta^2)}{2(1+\beta)^2} \|T-\bar x\|^{p+1}+\tfrac{H}{2}\|T-\bar x\|^{p+1} = \tfrac{H}{(1+\beta)} \|T-\bar x\|^{p+1}, \nonumber
    \end{align}
    giving \eqref{eq:solProp2}. Let us consider the function $\func{\zeta}{\R_+}{\R}$ with $\zeta(r)=\tfrac{1-\beta^2}{2H r^{p-1}} \|\nabla f(T)+g\|_*^2+\tfrac{H}{2} r^{p+1}$, which is the right-hand side of the inequality \eqref{eq:solProp31} with $r=\|T-\bar x\|$. From the inequality \eqref{eq:solProp1}, we obtain $r\geq \widehat{r}= \left(\tfrac{1-\beta}{H}\|\nabla f(T)+g\|_*\right)^{1/p}$. Taking the derivative of $\zeta$ at $\widehat{r}$ and $\beta\leq \tfrac{1}{p}$, we get
    \begin{align*}
        \zeta'(\widehat{r}) = \left(\tfrac{(1-p)(1+\beta)}{2}+\tfrac{(p+1)(1-\beta)}{2}\right) \|\nabla f(T)+g\|_* = (1-\beta p)\|\nabla f(T)+g\|_*\geq 0.
    \end{align*}
    Together with \eqref{eq:solProp31}, this implies \eqref{eq:solProp3}.
\end{proof}
\subsection{Solving \eqref{eq:prox} with $p$th-order tensor methods} 
\label{sec:ptensor}

In this section, we assume that $f(\cdot)$ is $p$th-order differentiabe with $M_{p+1}(f)<+\infty$ and show that an acceptable solution satisfying the inequality \eqref{eq:A} can be obtained by applying one step of the tensor method given in \cite{nesterov2019implementable}. 

The Taylor expansion of the function $f(\cdot)$ at $x\in\E$ is denoted by
\begin{align*}
    \Omega_{x,p}(y) = f(x)+\sum_{k=1}^p\tfrac{1}{k!} D^k f(x)[y-x]^k,\quad y\in\E,
\end{align*}
and it holds that
\begin{equation}\label{eq:tensorGradIneq}
    \|\nabla f(y)-\nabla \Omega_{x,p}(y)\|_* \leq \tfrac{M_{p+1}(f)}{p!} \|y-x\|^p. 
\end{equation}
Let us define the \textit{augmented Taylor approximation} as
\begin{align*}
    \widehat{\Omega}_{x,p}(y) = \Omega_{x,p}(y)+\tfrac{M_{p+1}(f)}{(p+1)!} \|y-x\|^{p+1}.
\end{align*}
Note that if $M\geq M_{p+1}(f)$, then $F(y)\leq \widehat\Omega_{x,p}(y)+\psi(y)$, which is a uniform upper bound for $F(\cdot)$. In the case $M\geq p M_{p+1}(f)$, the function $\widehat\Omega_{x,p}(y)+\psi(y)$ is convex, as confirmed by \cite[Theorem 1]{nesterov2019implementable}, which implies that one will be able to minimize the problem \eqref{eq:prob} by the \textit{tensor step}, i.e.,
\begin{equation}\label{eq:tensor}
    T_{p,M}^{f,g}(x)=\displaystyle \argmin_{y\in\dom\psi} \widehat\Omega_{x,p}(y)+\psi(y).
\end{equation}
We next show that an approximate solution of \eqref{eq:tensor} can be employed as an acceptable solution of the proximal-point operator \eqref{eq:prox} by the inexact $p$th-order tensor method proposed in \cite{grapiglia2020inexact,nesterov2019implementable}.

\begin{lem}[acceptable solutions by the tensor method \eqref{eq:tensor}]\label{lem:tensorApprox}
Let $(1-\gamma)M>M_{p+1}(f)$ and the approximate solution $T$ of \eqref{eq:tensor} satisfies
\begin{equation}\label{eq:approxTensor}
    \|\nabla \widehat\Omega_{x,p}(T)+ g\|_* \leq \tfrac{\gamma}{1+\gamma}\|\nabla \Omega_{x,p}(T)+ g\|_*,
\end{equation}
for some $g\in\partial \psi(T)$ and $\gamma\in \left[0, \tfrac{\beta}{1+\beta}\right)$. Then, for point $T=T_{p,M}^{f,g}(x)$, it holds that
\begin{equation}\label{eq:tensorSolIneq}
    \|\nabla f(T)+\tfrac{M}{p!}\nabla d_{p+1}(T-x)+g\|_* \leq \tfrac{M_{p+1}(f)+\gamma M}{(1-\gamma)M-M_{p+1}(f)} \|\nabla f(T)+ g\|_*.
\end{equation}
\end{lem}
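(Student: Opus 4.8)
The plan is to reduce everything to estimates in terms of the single quantity $\|w\|_*$, where I set $w := \tfrac{M}{p!}\nabla d_{p+1}(T-x)$, the gradient of the regularizing term. The starting point is the pair of identities
\begin{equation*}
    \nabla \widehat\Omega_{x,p}(T) = \nabla \Omega_{x,p}(T) + w, \qquad \nabla f(T) + w + g = \big(\nabla f(T) - \nabla \Omega_{x,p}(T)\big) + \big(\nabla \widehat\Omega_{x,p}(T) + g\big),
\end{equation*}
together with $\|w\|_* = \tfrac{M}{p!}\|T-x\|^p$, which follows from \eqref{eq:dp1Der} and $\|Bz\|_* = \|z\|$. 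The first identity says that $w$ is exactly the difference between the augmented and the plain Taylor gradients; the second rewrites the target vector as the Taylor remainder $\nabla f(T) - \nabla\Omega_{x,p}(T)$ plus the inexactness residual $\nabla\widehat\Omega_{x,p}(T)+g$ controlled by \eqref{eq:approxTensor}.

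First I would bound both pieces by $\|w\|_*$. Applying the reverse triangle inequality to $w = (\nabla\widehat\Omega_{x,p}(T)+g) - (\nabla\Omega_{x,p}(T)+g)$ and invoking the acceptance test \eqref{eq:approxTensor} gives
\begin{equation*}
    \|w\|_* \geq \|\nabla\Omega_{x,p}(T)+g\|_* - \|\nabla\widehat\Omega_{x,p}(T)+g\|_* \geq \tfrac{1}{1+\gamma}\|\nabla\Omega_{x,p}(T)+g\|_*,
\end{equation*}
hence $\|\nabla\Omega_{x,p}(T)+g\|_* \leq (1+\gamma)\|w\|_*$; feeding this back into \eqref{eq:approxTensor} produces the sharp estimate $\|\nabla\widehat\Omega_{x,p}(T)+g\|_* \leq \gamma\|w\|_*$. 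For the remainder I would use \eqref{eq:tensorGradIneq} to get $\|\nabla f(T)-\nabla\Omega_{x,p}(T)\|_* \leq \tfrac{M_{p+1}(f)}{p!}\|T-x\|^p = \tfrac{M_{p+1}(f)}{M}\|w\|_*$. Combining the two through the second identity and the triangle inequality yields the numerator estimate
\begin{equation*}
    \|\nabla f(T) + w + g\|_* \leq \Big(\gamma + \tfrac{M_{p+1}(f)}{M}\Big)\|w\|_*.
\end{equation*}

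Finally I would derive a matching lower bound on $\|\nabla f(T)+g\|_*$. Writing $\nabla f(T)+g = (\nabla f(T)+w+g) - w$ and applying the reverse triangle inequality together with the numerator estimate gives
\begin{equation*}
    \|\nabla f(T)+g\|_* \geq \|w\|_* - \|\nabla f(T)+w+g\|_* \geq \Big(1 - \gamma - \tfrac{M_{p+1}(f)}{M}\Big)\|w\|_*,
\end{equation*}
where the hypothesis $(1-\gamma)M > M_{p+1}(f)$ guarantees that the factor $1-\gamma-\tfrac{M_{p+1}(f)}{M}$ is strictly positive. Dividing the numerator estimate by this lower bound eliminates $\|w\|_*$ and reproduces exactly the constant $\tfrac{M_{p+1}(f)+\gamma M}{(1-\gamma)M - M_{p+1}(f)}$ of \eqref{eq:tensorSolIneq}. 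I expect the main obstacle to be conceptual rather than computational: normalizing by $\|\nabla f(T)+g\|_*$ from the outset forces repeated use of $\|\nabla\Omega_{x,p}(T)+g\|_* \leq \|\nabla f(T)+g\|_* + \|w\|_*$ and leads to a looser bound carrying spurious factors of $(1+2\gamma)$. The key insight is that $\|w\|_*$ is the natural pivot, since it simultaneously sharpens the residual bound to $\gamma\|w\|_*$ and makes the denominator estimate immediate. The restriction $\gamma < \tfrac{\beta}{1+\beta}$ plays no role in this inequality and is only needed downstream to force the resulting constant below $\beta$.
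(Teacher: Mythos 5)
Your proposal is correct and follows essentially the same route as the paper's proof: the bound $\|\nabla\Omega_{x,p}(T)+g\|_*\leq(1+\gamma)\tfrac{M}{p!}\|T-x\|^p$ via the reverse triangle inequality, the resulting residual estimate $\|\nabla\widehat\Omega_{x,p}(T)+g\|_*\leq\tfrac{\gamma M}{p!}\|T-x\|^p$, the Taylor-remainder bound \eqref{eq:tensorGradIneq}, and the matching upper and lower bounds on the target vector and on $\|\nabla f(T)+g\|_*$ are exactly the steps of the paper, merely reorganized with $\|w\|_*=\tfrac{M}{p!}\|T-x\|^p$ as an explicit pivot rather than carried through one long chain of inequalities.
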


\begin{proof}
    It follows from \eqref{eq:approxTensor} that
    \begin{align*}
        \tfrac{\gamma}{1+\gamma}\|\nabla \Omega_{x,p}(T)+ g\|_* \geq \|\nabla \widehat\Omega_{x,p}(T)+ g\|_* \geq \|\nabla \Omega_{x,p}(T)+ g\|_*-\tfrac{M}{p!}\|T-x\|^p,
    \end{align*}
    which consequently implies
    \begin{align*}
        \|\nabla \Omega_{x,p}(T)+ g\|_* \leq (1+\gamma) \tfrac{M}{p!}\|T-x\|^p,
    \end{align*}
    for some $g\in\partial \psi(T)$. Together with \eqref{eq:tensorGradIneq} and \eqref{eq:approxTensor}, this yields
    \begin{align*}
        \tfrac{M_{p+1}(f)}{p!} \|T-x\|^p &\geq \|\nabla f(T)-\nabla \Omega_{x,p}(T)\|_* = \|\nabla f(T)-\nabla \widehat\Omega_{x,p}(T)+\tfrac{M}{p!}\nabla d_{p+1}(T-x)\|_*\\
        &= \|\nabla f(T)+\tfrac{M}{p!}\nabla d_{p+1}(T-x)+g-(\nabla \widehat\Omega_{x,p}(T)+g)\|_*\\
        &\geq \|\nabla f(T)+\tfrac{M}{p!}\nabla d_{p+1}(T-x)+g\|_*-\|\nabla \widehat\Omega_{x,p}(T)+g\|_*\\
        &\geq \|\nabla f(T)+\tfrac{M}{p!}\nabla d_{p+1}(T-x)+g\|_*-\tfrac{\gamma M}{p!}\|T-x\|^p\\
        &\geq \|\tfrac{M}{p!}\nabla d_{p+1}(T-x)\|_*-\|\nabla f(T)+g\|_*-\tfrac{\gamma M}{p!}\|T-x\|^p,
    \end{align*}
    implying $\tfrac{1}{p!} \|T-x\|^p\leq \tfrac{1}{(1-\gamma) M-M_{p+1}(f)} \|\nabla f(T)+g\|_*$. This and the inequality
    \begin{align*}
         \|\nabla f(T)+\tfrac{M}{p!}\nabla d_{p+1}(T-x)+g\|_* &\leq  \tfrac{M_{p+1}(f)+\gamma M}{p!} \|T-x\|^p,
    \end{align*}
obtained in the above chain, leads to the desired result \eqref{eq:tensorSolIneq}.
\end{proof}

We note that setting $M=\tfrac{1+\beta}{\beta(1-\gamma)-\gamma}M_{p+1}(f)$ and $H=\tfrac{M}{p!}$, the inequality \eqref{eq:tensorSolIneq} can be rewritten in the form
\begin{align*}
    \|\nabla f(T)+H\nabla d_{p+1}(T-x)+g\|_* \leq \beta \|\nabla f(T)+g\|_*,
\end{align*}
which implies $T\in\A(x,\beta)$. In order to illustrate the results of Lemma \ref{lem:tensorApprox}, we study the following one-dimensional example.

\begin{exa}\label{exa:tensor}
    Let us consider the minimization of the one-dimensional function $\func{F}{\R}{\R}$ given by $F(x)=x^4+|x|$, where $x^*=0$ is its unique solution. In the setting of the problem \eqref{eq:prob}, we have $f(x)=x^4$ and $\psi(x)=|x|$. Let us set $p=3$, i.e., we have $M_4(f)=24$ and
    \begin{equation*}
        \Omega_{x,3}(y)= x^4+4x^3(y-x)+6x^2(y-x)^2+4x(y-x)^3,\quad \widehat{\Omega}_{x,3,M}(y)=\Omega_{x,3}(y)+\tfrac{M}{24} (y-x)^4,
    \end{equation*}
    where $M=1.9 M_4(f)$. Thus,
    \begin{equation*}
        \Omega_{x,3}'(y)= 4x^3+12x^2(y-x)+12x(y-x)^2,\quad \widehat{\Omega}_{x,3,M}'(y)=\Omega_{x,3}'(y)+\tfrac{M}{6} (y-x)^3.
    \end{equation*}
    Setting $\gamma=\tfrac{8}{19}\in[0,\tfrac{9}{19})$ and $x=0.8$, we illustrate the feasible area of $|\widehat{\Omega}'_{x,3,M}(y)|\leq \tfrac{\gamma}{1+\gamma}|\Omega'_{x,3}(y)|$ and acceptable solutions in Subfigures (a) and (b) of Figure \ref{fig:tensor}, respectively. We note that with our choice of $\gamma$ and $M$, we have $(1-\gamma)M>M_4(f)$, which implies that all assumptions of Lemma~\ref{lem:tensorApprox} are valid. 
    
    \begin{figure}[H]
        \subfloat[Points satisfying $|\widehat{\Omega}'_{x,3,M}(y)|\leq \tfrac{\gamma}{1+\gamma}|\Omega'_{x,3}(y)|$.]{\includegraphics[width= 7.4cm]{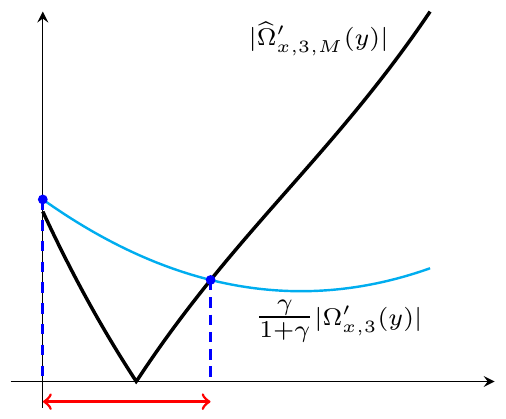}}\qquad
        \subfloat[Acceptable solutions for $\beta=0.9$ and $ x=0.8$.]{\includegraphics[width= 7.4cm]{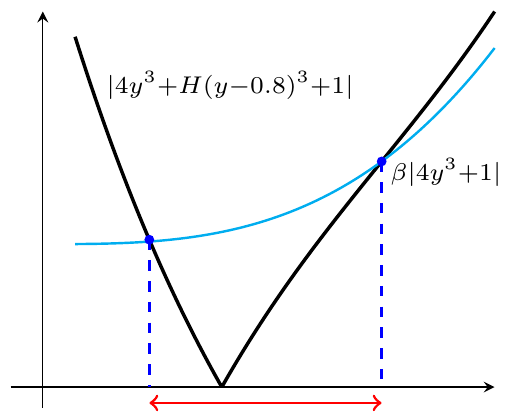}}
        \caption{Subfigure (a) stands for the set of points $y$ satisfying the inequality $|\widehat{\Omega}'_{x,3,M}(y)|\leq \tfrac{\gamma}{1+\gamma}|\Omega'_{x,3}(y)|$ with $x=0.8$, $\gamma=8/19$, and $\beta=0.9$, and Subfigure (b) illustrates the set of acceptable solutions for $x=0.8$ and $\beta =0.9$. \label{fig:tensor}}
    \end{figure}
\end{exa}

In Section \ref{sec:biLevel}, we further extend our discussion concerning the computation of an acceptable solution $\A(\bar x,\beta)$ for the $p$th-order proximal-point problem \eqref{eq:prox} by the lower-level methods.

\subsection{Inexact high-order proximal-point method}\label{sec:ihopp}
In this section, we introduce our inexact high-order proximal-point method for the composite minimization \eqref{eq:prob} and verify its rate of convergence. 

We now consider our first inexact high-order proximal-point scheme that generates a sequence of iterations satisfying 
\begin{equation}\label{eq:ihopp}
             (T_k,g) \in \A(x_k,\beta),
        \end{equation}
which we summarize in Algorithm~\ref{alg:ihoppa}.

\vspace{3mm}
\RestyleAlgo{boxruled}
\begin{algorithm}[H]
\DontPrintSemicolon \KwIn{ $x_{0}\in\dom \psi$,~ $\beta\in [0,1/p]$,~ $H>0$,~$\varepsilon>0$,~ $k=0$;} 
\Begin{ 
    \While {$F(x_k)-F^*>\varepsilon$}{  
        Find $T_k$ and $g\in\partial\psi(x_{k+1})$ such that \eqref{eq:ihopp} holds and set $x_{k+1}=T_k$;
        $k = k+1$;
     } 
}
\caption{Inexact High-Order Proximal-Point Algorithm\label{alg:ihoppa}}
\end{algorithm}

\vspace{3mm}
In order to verify the the convergence rate of Algorithm \ref{alg:ihoppa}, we need the next lemma, which was proved in \cite[Lemma 11]{nesterov2019inexact}.

\begin{lem}\cite[Lemma 11]{nesterov2019inexact}
\label{lem:xiConv}
    Let $\seq{\xi_k}$ be a sequence of positive numbers satisfying
    \begin{equation}\label{eq:xikIneq}
        \xi_k-\xi_{k+1} \geq \xi_{k+1}^{1+\alpha},\quad k\geq 0,
    \end{equation}
    for $\alpha\in (0,1]$. Then, for $k\geq 0$, the following holds
    \begin{equation}\label{eq:xikIneqRes}
        \xi_k \leq \tfrac{\xi_0}{\left(1+\tfrac{\alpha k}{1+\alpha}\log(1+\xi_0^\alpha)\right)^{1/\alpha}} \leq \left((1+\tfrac{1}{\alpha})(1+\xi_0^\alpha).\tfrac{1}{k}\right)^{1/\alpha}.
    \end{equation}
\end{lem}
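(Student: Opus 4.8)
The plan is to treat this as a purely scalar recursion and to work throughout with the potential $v_k = \xi_k^{-\alpha}$. Since $\xi_{k+1}<\xi_k$ by \eqref{eq:xikIneq}, the sequence $\{v_k\}$ is strictly increasing, and both bounds in \eqref{eq:xikIneqRes} are statements about how fast it grows. The second inequality in \eqref{eq:xikIneqRes} does not involve the recursion at all: it compares two explicit functions of $k$, and after raising to the power $\alpha$ and cross-multiplying it reduces to $\alpha k\,\xi_0^\alpha \le (1+\alpha)(1+\xi_0^\alpha) + \alpha k\,(1+\xi_0^\alpha)\log(1+\xi_0^\alpha)$, which follows from the elementary estimate $\tfrac{t}{1+t}\le\log(1+t)$ applied with $t=\xi_0^\alpha$. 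So all of the real work is in the first inequality, i.e. in showing $v_k \ge \xi_0^{-\alpha}\big(1+\tfrac{\alpha k}{1+\alpha}\log(1+\xi_0^\alpha)\big)$.

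First I would record the exact one-step gain. Writing \eqref{eq:xikIneq} as $\xi_k\ge \xi_{k+1}(1+\xi_{k+1}^\alpha)$ and raising to the power $-\alpha$ gives $v_{k+1}-v_k \ge v_{k+1}\big(1-(1+\xi_{k+1}^\alpha)^{-\alpha}\big)$, which is tight for the extremal sequence. The weaker (second) bound drops out immediately: with $t=\xi_{k+1}^\alpha$ and $v_{k+1}=t^{-1}$, the elementary inequality $1-(1+t)^{-\alpha}\ge \tfrac{\alpha t}{(1+\alpha)(1+t)}$ — equivalently $w+\alpha\ge(1+\alpha)w^{1-\alpha}$ for $w=1+t\ge 1$, which holds because the difference vanishes at $w=1$ and has nonnegative derivative on $[1,\infty)$ — yields the constant per-step gain $v_{k+1}-v_k \ge \tfrac{\alpha}{(1+\alpha)(1+\xi_{k+1}^\alpha)} \ge \tfrac{\alpha}{(1+\alpha)(1+\xi_0^\alpha)}$, using $\xi_{k+1}\le\xi_0$. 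Telescoping and inverting reproduce exactly the right-hand expression of \eqref{eq:xikIneqRes}.

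The first (logarithmically improved) bound is the hard part, because a single worst-case step is \emph{not} enough: bounding every increment by its value at $\xi_{k+1}=\xi_0$ would force the false inequality $1-(1+\xi_0^\alpha)^{-\alpha}\ge \tfrac{\alpha}{1+\alpha}\log(1+\xi_0^\alpha)$, so the contribution of the rapid initial decrease must be retained. I would normalize by $R_k:=v_k/v_0=\xi_0^\alpha\,\xi_k^{-\alpha}$, turning the recursion into $R_{k+1}\ge R_k\big(1+\xi_0^\alpha/R_{k+1}\big)^\alpha$, and prove $R_k\ge S_k:=1+\tfrac{\alpha k}{1+\alpha}\log(1+\xi_0^\alpha)$ by induction. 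Since the map sending $R_k$ to the smallest admissible $R_{k+1}$ is monotone, it suffices to push $S_k$ forward one step; using $(1+y)^\alpha\ge 1+\alpha\log(1+y)$ this reduces to the scalar inequality $\xi_0^\alpha\ge \Phi(S_k)$, where $\Phi(S)=\big(S+\tfrac{\alpha}{1+\alpha}\ell\big)\big(e^{\ell/((1+\alpha)S)}-1\big)$ and $\ell=\log(1+\xi_0^\alpha)$. The main obstacle is precisely this inequality, which is genuinely tight (equality in the limit $\alpha\to 0$), so no crude slackening survives. I would close it by showing that $\Phi$ is decreasing in $S$ (via $1-e^{-x}\le x$), so that only $S=S_0=1$ needs checking, and that $\Phi(1)\le \xi_0^\alpha$ — the one-variable estimate $e^\ell-1\ge\big(1+\tfrac{\alpha}{1+\alpha}\ell\big)\big(e^{\ell/(1+\alpha)}-1\big)$, which holds since both sides agree at $\ell=0$ and the left side grows faster (its derivative dominates, using $e^x\ge 1+x$).
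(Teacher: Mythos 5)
The paper does not actually prove this lemma --- it is imported verbatim from \cite[Lemma~11]{nesterov2019inexact} with no argument given --- so there is no in-paper proof to compare against; your proposal has to stand on its own, and it does. I checked each step. The one-step identity $v_{k+1}-v_k\ge v_{k+1}\bigl(1-(1+\xi_{k+1}^{\alpha})^{-\alpha}\bigr)$ for $v_k=\xi_k^{-\alpha}$ is correct (raising $\xi_k\ge\xi_{k+1}(1+\xi_{k+1}^{\alpha})$ to the power $-\alpha$ reverses the inequality as you use it). The elementary bound $w+\alpha\ge(1+\alpha)w^{1-\alpha}$ on $[1,\infty)$ does give the uniform per-step gain $\alpha/((1+\alpha)(1+\xi_0^{\alpha}))$, which telescopes to the right-hand expression of \eqref{eq:xikIneqRes}; and the purely algebraic comparison of the two explicit bounds via $\tfrac{t}{1+t}\le\log(1+t)$ is correct, so the chain is complete once the first bound is established. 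For that first bound, your observation that a worst-case per-step estimate provably fails is the right diagnosis, and the induction on $R_k\ge S_k$ is sound: the map $\phi(R)=R^{1+\alpha}(R+\xi_0^{\alpha})^{-\alpha}$ is indeed increasing (so the reduction to $\phi(S_{k+1})\le S_k$ is legitimate, though you state this monotonicity without verification --- it is a one-line derivative computation worth including), the inequality $(1+y)^{\alpha}\ge1+\alpha\log(1+y)$ reduces the step to $\Phi(S_k)\le\xi_0^{\alpha}$, the monotone decrease of $\Phi$ via $1-e^{-u}\le u$ reduces that to $S=1$, and the final estimate $e^{\ell}-1\ge\bigl(1+\tfrac{\alpha}{1+\alpha}\ell\bigr)\bigl(e^{\ell/(1+\alpha)}-1\bigr)$ follows by derivative comparison using $e^{c\ell}\ge1+c\ell$ with $c=\alpha/(1+\alpha)$, exactly as you indicate. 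The argument is tight in the limit $\alpha\to0$, as you note, which explains why the chain of sufficient conditions has to be chosen this carefully. This is a complete and correct self-contained proof of the cited result.
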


Let us investigate the rate of convergence of Algorithm \ref{alg:ihoppa}. Let us first define the \textit{radius of the initial level set} of the function $\psi$ in \eqref{eq:prob} as $D_0= \max_{x\in\dom\psi} \set{\|x-x^*\| ~:~F(x)\leq F(x_0)}<+\infty$. 

\begin{thm}[convergence rate of Algorithm \ref{alg:ihoppa}]
\label{thm:ihoppConv}
    Let the sequence $\seq{x_k}$ be generated by the inexact high-order proximal-point method \eqref{eq:ihopp} with $\beta\in [0,1/p]$. Then, for $k\geq 0$, we have
    \begin{equation}\label{eq:ihoppConv}
        F(x_k)-F^* \leq \tfrac{1}{2} \left(\tfrac{1}{1-\beta} H D_0^{p+1}+F(x_0)-F^*\right) \left(\tfrac{2p+2}{k}\right)^p.
    \end{equation}
\end{thm}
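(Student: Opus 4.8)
The plan is to reduce the analysis to the scalar recursion of Lemma~\ref{lem:xiConv} by tracking the functional residual $\xi_k := F(x_k)-F^*$. First I would record two consequences of $(T_k,g)\in\A(x_k,\beta)$ with $g\in\partial\psi(T_k)$, using that $\nabla f(T_k)+g\in\partial F(T_k)$. Testing this subgradient inequality at the point $x_k$ and invoking \eqref{eq:solProp2} gives $F(x_k)-F(T_k)\geq \innprod{\nabla f(T_k)+g}{x_k-T_k}\geq \tfrac{H}{1+\beta}\|T_k-x_k\|^{p+1}\geq 0$, so the method is monotone, $F(x_{k+1})\leq F(x_k)\leq F(x_0)$. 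Consequently every iterate stays in the initial level set and $\|x_{k+1}-x^*\|\leq D_0$.

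Next I would lower-bound the subgradient norm by the residual. Testing the same subgradient inequality at $x^*$ yields $\xi_{k+1}=F(T_k)-F^*\leq \innprod{\nabla f(T_k)+g}{T_k-x^*}\leq \|\nabla f(T_k)+g\|_*\,\|T_k-x^*\|\leq D_0\,\|\nabla f(T_k)+g\|_*$, hence $\|\nabla f(T_k)+g\|_*\geq \xi_{k+1}/D_0$. Now I would build the key recursion: combining $F(x_k)-F(T_k)\geq \innprod{\nabla f(T_k)+g}{x_k-T_k}$ with the sharpened estimate \eqref{eq:solProp3} (valid since $\beta\leq 1/p$) and the previous bound gives $\xi_k-\xi_{k+1}\geq \big(\tfrac{1-\beta}{H}\big)^{1/p}\|\nabla f(T_k)+g\|_*^{(p+1)/p}\geq \big(\tfrac{1-\beta}{HD_0^{p+1}}\big)^{1/p}\xi_{k+1}^{1+1/p}$.

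To apply Lemma~\ref{lem:xiConv} I would normalize by $A:=\tfrac{HD_0^{p+1}}{1-\beta}$, setting $\zeta_k:=\xi_k/A$; the recursion becomes exactly $\zeta_k-\zeta_{k+1}\geq \zeta_{k+1}^{1+\alpha}$ with $\alpha=1/p\in(0,1]$. Lemma~\ref{lem:xiConv} then yields $\zeta_k\leq\big((1+p)(1+\zeta_0^{1/p})\tfrac1k\big)^p$, and multiplying back by $A$ together with the elementary convexity bound $(1+\zeta_0^{1/p})^p\leq 2^{p-1}(1+\zeta_0)=2^{p-1}\tfrac{A+\xi_0}{A}$ collapses the constants to $\tfrac12(A+\xi_0)\big(\tfrac{2p+2}{k}\big)^p$, which is precisely \eqref{eq:ihoppConv} after substituting $A$ and $\xi_0=F(x_0)-F^*$.

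The routine parts are the two subgradient inequalities and the normalization; the step demanding the most care is producing the recursion with the correct exponent $1+1/p$ and constant, which hinges on using \eqref{eq:solProp3} rather than \eqref{eq:solProp2} (so that the right power of $\|\nabla f(T_k)+g\|_*$ appears) and on the monotone-descent bound $\|T_k-x^*\|\leq D_0$ to convert the gradient norm into the residual $\xi_{k+1}$. The only genuinely non-mechanical estimate at the end is the convexity inequality $(a+b)^p\leq 2^{p-1}(a^p+b^p)$, needed to match the clean $\tfrac12(\,\cdot\,)\big(\tfrac{2p+2}{k}\big)^p$ form.
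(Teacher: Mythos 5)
Your proposal is correct and follows essentially the same route as the paper's proof: the subgradient inequality at $x_k$ combined with \eqref{eq:solProp3}, the Cauchy--Schwarz bound $F(x_{k+1})-F^*\leq D_0\|\nabla f(x_{k+1})+g\|_*$, and the reduction to the scalar recursion of Lemma~\ref{lem:xiConv} with $\alpha=1/p$, finished by the same estimate $(1+\xi_0^{1/p})^p\leq 2^{p-1}(1+\xi_0)$. The only (welcome) addition is that you explicitly verify monotone descent via \eqref{eq:solProp2} to justify $\|x_{k+1}-x^*\|\leq D_0$, which the paper leaves implicit in the definition of $D_0$.
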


\begin{proof}
    From the convexity of $\psi(\cdot)$ and \eqref{eq:solProp3}, we obtain 
    \begin{equation*}
        F(x_k)-F(x_{k+1}) \geq \innprod{\nabla f(x_{k+1})+g}{x_{k+1}-x_k}\\
        \geq \left(\tfrac{1-\beta}{H}\right)^{1/p} \|\nabla f(x_{k+1})+g\|_*^{\tfrac{p+1}{p}},
    \end{equation*}
    with $g\in\partial\psi(x_{k+1})$ and $(x_{k+1},g) \in \A(x_k,\beta)$. By Cauchy-Schwartz inequalitiy, we get
    \begin{align*}
        F(x_{k+1})-F^* &\leq \innprod{\nabla f(x_{k+1})+g}{x_{k+1}-x^*}\leq \|\nabla f(x_{k+1})+g\|_*\|x_{k+1}-x^*\|\\
        &\leq D_0 \|\nabla f(x_{k+1})+g\|_*.
    \end{align*}
    It follow from the last two inequalities, that
    \begin{align*}
        F(x_k)-F(x_{k+1}) \geq \left(\tfrac{1-\beta}{H D_0^{p+1}}\right)^{1/p} \left(F(x_{k+1})-F^*\right)^{\tfrac{p+1}{p}}.
    \end{align*}
    Setting $\xi_k=\tfrac{1-\beta}{H D_0^{p+1}}(F(x_k)-F^*)$ and $\alpha=1/p$, we see that the condition \eqref{eq:xikIneq} is satisfied for all $k\geq 0$. Therefore, from Lemma \ref{lem:xiConv}, we have
    \begin{align*}
        \xi_k \leq \left(\left(1+\tfrac{1}{\alpha}\right)(1+\xi_0^\alpha).\tfrac{1}{k}\right)^{\tfrac{1}{\alpha}} \leq \left(1+\tfrac{1}{\alpha}\right)^{\tfrac{1}{\alpha}} 2^{\tfrac{1-\alpha}{\alpha}}(1+\xi_0) \left(\tfrac{1}{k}\right)^{\tfrac{1}{\alpha}},
    \end{align*}
    giving \eqref{eq:ihoppConv}.
\end{proof}

\subsection{Accelerated inexact high-order proximal-point method}\label{sec:aihopp}
In this section, we accelerate the scheme \eqref{eq:ihopp} by applying a variant of the \textit{standard estimating sequences technique}, which has been used as an standard tools for accelerating first- and second-order methods; see, e.g., \cite{ahookhosh2019accelerated,baes2009estimate,nesterov2005smooth,nesterov2008accelerating,nesterov2013gradient,nesterov2015universal,nesterov2018lectures}. 

Let $\seq{A_k}$ be a sequence of positive numbers generated by $A_{k+1}=A_k+a_{k+1}$ for $a_k>0$. The idea of the estimating sequences techniques is to generate a sequence of estimating functions $\seq{\Psi_k(x)}$ of $F(\cdot)$ in such a way that, at each iteration $k\geq 0$, the inequality
\begin{equation}\label{eq:estSeqIneq}
    A_k F(x_k)\leq \Psi_k^*\equiv \min_{x\in\dom\psi} \Psi_k(x), \quad k\geq 0.
\end{equation}
Let us set $c_p=\left(\tfrac{1-\beta}{H}\right)^{1/p}$. Following  \cite{nesterov2020inexact,nesterov2020superfast}, we set
\begin{equation}\label{eq:Ak}
    A_{k}= \left(\tfrac{c_p}{2}\right)^p \left(\tfrac{k}{p+1}\right)^{p+1}, \quad a_{k+1}=A_{k+1}-A_k, \quad k\geq 0.
\end{equation}
For $x_0, y_k\in\E$ and $(T_k,g)\in\A(y_k,\beta)$, let us define the \textit{estimating sequence} 
\begin{equation}\label{eq:estSeq}
    \Psi_{k+1}(x)=\left\{
    \begin{array}{ll}
        d_{p+1}(x-x_0) &~~\mathrm{if}~ k=0,\vspace{2mm}\\
        \Psi_k(x)+a_{k+1}[\ell_{T_k}(x)+\psi(x)] &~~\mathrm{if}~ k\geq 1.  
    \end{array}
    \right.
\end{equation}

\begin{lem}\label{lem:estSeqIneq}
    Let the sequence $\seq{\Psi_k(x)}$ be generated by \eqref{eq:estSeq} and $\upsilon_k=\argmin_{x\in\E} \Psi_k(x)$. Then, we have
    \begin{equation}\label{eq:psikPsiIneq}
        A_k F(x)+ d_{p+1}(x-x_0)\geq \Psi_k(x) \geq \Psi_k^*+\tfrac{1}{p+1}\left(\tfrac{1}{2}\right)^{p-1} \|x-\upsilon_k\|^{p+1}, \quad \forall x\in\dom\psi,~ k\geq 0.
    \end{equation}
\end{lem}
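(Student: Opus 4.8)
The plan is to establish the two inequalities in \eqref{eq:psikPsiIneq} separately: the left-hand bound $A_k F(x)+d_{p+1}(x-x_0)\geq \Psi_k(x)$ by induction on $k$, and the right-hand bound by exploiting the uniform convexity that $\Psi_k(\cdot)$ inherits from $d_{p+1}(\cdot)$. Both parts are elementary consequences of the construction \eqref{eq:estSeq} together with the convexity of $f$ and the lower bound \eqref{eq:dp1LowerBound}.

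For the left inequality I would induct on $k$. The base case $k=0$ is immediate, since $A_0=0$ by \eqref{eq:Ak} and $\Psi_0(x)=d_{p+1}(x-x_0)$, so both sides coincide. For the inductive step I would invoke the recursion $\Psi_{k+1}(x)=\Psi_k(x)+a_{k+1}[\ell_{T_k}(x)+\psi(x)]$ from \eqref{eq:estSeq}. Because $f(\cdot)$ is convex, its linear model obeys $\ell_{T_k}(x)\leq f(x)$, whence $\ell_{T_k}(x)+\psi(x)\leq F(x)$. Combining the induction hypothesis with $A_{k+1}=A_k+a_{k+1}$ then gives
\[
\Psi_{k+1}(x)\leq A_k F(x)+d_{p+1}(x-x_0)+a_{k+1}F(x)=A_{k+1}F(x)+d_{p+1}(x-x_0),
\]
which closes the induction.

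For the right inequality I would write $\Psi_k(x)=d_{p+1}(x-x_0)+\phi_k(x)$, where $\phi_k(\cdot)$ collects the affine models $\ell_{T_i}(\cdot)$ and the terms $\psi(\cdot)$ weighted by the positive coefficients $a_i$; in particular $\phi_k(\cdot)$ is convex. Since $\upsilon_k$ minimizes $\Psi_k(\cdot)$ over $\E$, the optimality condition reads $0\in\nabla d_{p+1}(\upsilon_k-x_0)+\partial\phi_k(\upsilon_k)$, so there is a subgradient $s\in\partial\phi_k(\upsilon_k)$ with $s=-\nabla d_{p+1}(\upsilon_k-x_0)$. I would then add the uniform-convexity bound \eqref{eq:dp1LowerBound} for $d_{p+1}(\cdot-x_0)$ evaluated at base point $\upsilon_k$ to the subgradient inequality $\phi_k(x)\geq\phi_k(\upsilon_k)+\innprod{s}{x-\upsilon_k}$. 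The two first-order terms cancel because $\nabla d_{p+1}(\upsilon_k-x_0)+s=0$, while the zeroth-order terms sum to $d_{p+1}(\upsilon_k-x_0)+\phi_k(\upsilon_k)=\Psi_k(\upsilon_k)=\Psi_k^*$, leaving exactly $\Psi_k(x)\geq\Psi_k^*+\tfrac{1}{p+1}\left(\tfrac{1}{2}\right)^{p-1}\|x-\upsilon_k\|^{p+1}$.

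The argument is essentially bookkeeping, and I expect no genuine obstacle. The only points requiring care are the nonsmoothness of $\psi(\cdot)$, which forces the use of subdifferentials rather than gradients in the optimality condition for $\upsilon_k$, and the structural observation that augmenting the uniformly convex function $d_{p+1}(\cdot-x_0)$ by an arbitrary convex function preserves its uniform-convexity modulus $\tfrac{1}{p+1}\left(\tfrac{1}{2}\right)^{p-1}$. The remaining subtlety is purely notational: keeping the indexing of $\Psi_k$ and $A_k$ in \eqref{eq:estSeq}--\eqref{eq:Ak} consistent so that the base case $k=0$ is correctly aligned.
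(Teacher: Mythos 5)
Your proposal is correct and follows essentially the same route as the paper: the left inequality by induction using $\ell_{T_k}(x)\leq f(x)$ and $A_{k+1}=A_k+a_{k+1}$, and the right inequality from the uniform convexity \eqref{eq:dp1LowerBound} of $d_{p+1}(\cdot-x_0)$ combined with the convexity of the remaining terms. The paper merely asserts the right-hand bound as a "direct consequence"; your subgradient-cancellation argument at $\upsilon_k$ is the standard way to fill in that step and is sound.
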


\begin{proof}
    The proof is given by induction on $k$. For $k=0$, $\Psi_0=d_{p+1}(x-x_0)$ and so \eqref{eq:psikPsiIneq} holds. We now assume that \eqref{eq:psikPsiIneq} holds for $k$ and show it for $k+1$. Then, it follows from \eqref{eq:estSeq} and the subgradient inequality that
    \begin{align*}
        \Psi_{k+1}(x)&=\Psi_k(x)+a_{k+1}[\ell_{x_{k+1}}(x)+\psi(x)]\\
        &\leq A_k F(x)+ d_{p+1}(x-x_0)+a_{k+1}[\ell_{x_{k+1}}(x)+\psi(x)]\\
        &\leq A_k F(x)+ d_{p+1}(x-x_0)+a_{k+1}F(x),
    \end{align*}
    leading to \eqref{eq:psikPsiIneq} for $k+1$. The right hand side inequality in \eqref{eq:psikPsiIneq} is a direct consequence of the definition of $\Psi_k(\cdot)$ and \eqref{eq:dp1LowerBound}. 
\end{proof}

We next present an accelerated version of the scheme \eqref{eq:A}.

\RestyleAlgo{boxruled}
\begin{algorithm}[ht!]
\DontPrintSemicolon \KwIn{$x_{0}\in\dom \psi$,~ $\beta\in [0,1/p]$,~ $H>0$,~ $y_0=\upsilon_0=x_0$,~ $\Psi_0=d_{p+1}(x-x_0)$,~$\varepsilon>0,~$ $k=0$;} 
\Begin{ 
    \While {$F(x_k)-F^*>\varepsilon$}{  
        Compute $\upsilon_{k}=\argmin_{x\in\E} \Psi_{k}(x)$ and compute $A_{k+1}$ and $a_{k+1}$ by \eqref{eq:Ak};\;
        Set $y_k= \tfrac{A_k}{A_{k+1}}x_k+\tfrac{a_{k+1}}{A_{k+1}}\upsilon_k$ and compute $(T_k,g) \in \A(y_k,\beta)$;\;
        Find $x_{k+1}$ such that $F(x_{k+1})\leq F(T_k)$;\;
        Update $\Psi_{k+1}(x)$ by \eqref{eq:estSeq} and set $k=k+1$;\;
     } 
}
\caption{Accelerated Inexact High-Order Proximal-Point Algorithm\label{alg:aihoppa}}
\end{algorithm}

In the subsequent result, we investigate the convergence rate of the sequence generated by the accelerated inexact high-order proximal-point method (Algorithm \ref{alg:aihoppa}).

\begin{thm}[convergence rate of Algorithm \ref{alg:aihoppa}]
\label{thm:aihppConv}
     Let the sequence $\seq{x_k}$ be generated by Algorithm \ref{alg:aihoppa} with $\beta\in [0,1/p]$. Then, the following statements holds:
     \begin{itemize}
         \item[(i)] \label{thm:aihppConv1}
         for all $k\geq 0$, the inequality \eqref{eq:estSeqIneq} holds;
         \item[(ii)] \label{thm:aihppConv2}
         for all $k\geq 0$, 
             \begin{equation}\label{eq:aihoppConv}
        F(x_k)-F^* \leq \tfrac{H}{2(1-\beta)} d_{p+1}(x_0-x^*) \left(\tfrac{2p+2}{k}\right)^{p+1};
    \end{equation} 
         \item[(iii)] \label{thm:aihppConv3}
         $d_{p+1}(\upsilon_k-x^*) \leq 2^{p-1} d_{p+1}(x_0-x^*)$,  for all $k\geq 0$.
     \end{itemize}
\end{thm}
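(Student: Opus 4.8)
The plan is to establish the three claims in sequence, with claim~(i) carrying essentially all of the work and claims~(ii)--(iii) following as short consequences of~(i) combined with Lemma~\ref{lem:estSeqIneq}. For~(i) I would argue by induction on $k$. The base case $k=0$ is immediate, since $A_0=0$ and $\Psi_0^*=\min_x d_{p+1}(x-x_0)=0$. For the inductive step, assuming $A_kF(x_k)\le\Psi_k^*$, the key observation is that because the algorithm guarantees $F(x_{k+1})\le F(T_k)$, it suffices to prove the stronger statement $A_{k+1}F(T_k)\le\Psi_{k+1}^*$.

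To obtain this, I would lower-bound $\Psi_{k+1}$ as follows. First apply the right inequality of \eqref{eq:psikPsiIneq} together with the inductive hypothesis to get $\Psi_k(x)\ge A_kF(x_k)+\frac{1}{p+1}(\frac12)^{p-1}\|x-\upsilon_k\|^{p+1}$. Then linearize the new term via the subgradient inequalities at $T_k$: since $g\in\partial\psi(T_k)$ and $f$ is convex, $\ell_{T_k}(x)+\psi(x)\ge F(T_k)+\innprod{\nabla f(T_k)+g}{x-T_k}$, and likewise $F(x_k)\ge F(T_k)+\innprod{\nabla f(T_k)+g}{x_k-T_k}$. Substituting the definition $y_k=\frac{A_k}{A_{k+1}}x_k+\frac{a_{k+1}}{A_{k+1}}\upsilon_k$, the cross terms collapse cleanly: $A_k(x_k-T_k)+a_{k+1}(x-T_k)=A_{k+1}(y_k-T_k)+a_{k+1}(x-\upsilon_k)$, so the bound becomes $\Psi_{k+1}(x)\ge A_{k+1}F(T_k)+A_{k+1}\innprod{\nabla f(T_k)+g}{y_k-T_k}+a_{k+1}\innprod{\nabla f(T_k)+g}{x-\upsilon_k}+\frac{1}{p+1}(\frac12)^{p-1}\|x-\upsilon_k\|^{p+1}$.

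Minimizing this lower bound over $x\in\E$ (legitimate since $\psi$ has been linearized away) is explicit: the last two terms evaluate to $-\frac{p}{p+1}2^{(p-1)/p}a_{k+1}^{(p+1)/p}\|\nabla f(T_k)+g\|_*^{(p+1)/p}$. This deficit must be absorbed by the gain term $A_{k+1}\innprod{\nabla f(T_k)+g}{y_k-T_k}$, which, because $(T_k,g)\in\A(y_k,\beta)$ with $\beta\le 1/p$, is bounded below by $A_{k+1}c_p\|\nabla f(T_k)+g\|_*^{(p+1)/p}$ thanks to \eqref{eq:solProp3}. Hence everything reduces to the scalar inequality $A_{k+1}c_p\ge\frac{p}{p+1}2^{(p-1)/p}a_{k+1}^{(p+1)/p}$, which I expect to be the main obstacle. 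Using $a_{k+1}=A_{k+1}-A_k$ with the definition of $A_k$ and the elementary estimate $(k+1)^{p+1}-k^{p+1}\le(p+1)(k+1)^p$, one gets $a_{k+1}\le(\frac{c_p}{2})^p(\frac{k+1}{p+1})^p$; substituting this, every factor of $c_p$ and $(k+1)/(p+1)$ cancels and the inequality collapses to $2\ge\frac{p}{p+1}2^{(p-1)/p}$, which holds since $\frac{p}{p+1}2^{-1/p}<1$. This closes the induction and proves~(i).

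Claim~(ii) then follows immediately: by~(i) and the left inequality of \eqref{eq:psikPsiIneq} evaluated at $x^*$, $A_kF(x_k)\le\Psi_k^*\le\Psi_k(x^*)\le A_kF^*+d_{p+1}(x_0-x^*)$, so $F(x_k)-F^*\le d_{p+1}(x_0-x^*)/A_k$; inserting $A_k=\frac{1}{2^p}\frac{1-\beta}{H}(\frac{k}{p+1})^{p+1}$ and simplifying reproduces exactly the stated rate. For~(iii), the right inequality of \eqref{eq:psikPsiIneq} at $x=x^*$ gives $(\frac12)^{p-1}d_{p+1}(\upsilon_k-x^*)\le\Psi_k(x^*)-\Psi_k^*$, and the same two one-line bounds ($\Psi_k^*\ge A_kF^*$ from~(i) and $\Psi_k(x^*)\le A_kF^*+d_{p+1}(x_0-x^*)$) show the right-hand side is at most $d_{p+1}(x_0-x^*)$, yielding the claim. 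The only genuinely delicate point in the whole argument is the scalar verification in the inductive step of~(i), where the growth of the sequence $\seq{A_k}$ has been tuned precisely so that the acceleration gain from \eqref{eq:solProp3} dominates the curvature deficit produced by minimizing the estimating function.
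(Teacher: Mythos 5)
Your proposal is correct and follows essentially the same route as the paper's proof: the same induction on $A_kF(x_k)\le\Psi_k^*$, the same linearization via the subgradient inequalities at $T_k$ and the collapse of cross terms through the definition of $y_k$, the same explicit minimization producing the deficit $-\tfrac{p}{p+1}2^{(p-1)/p}a_{k+1}^{(p+1)/p}\|\nabla f(T_k)+g\|_*^{(p+1)/p}$, the same use of \eqref{eq:solProp3}, and the same two one-line consequences for (ii) and (iii). The only cosmetic difference is that you verify $a_{k+1}^{(p+1)/p}\le\tfrac{c_p}{2}A_{k+1}$ via the convexity estimate $(k+1)^{p+1}-k^{p+1}\le(p+1)(k+1)^p$, whereas the paper bounds the ratio $(A_{k+1}-A_k)^{(p+1)/p}/A_{k+1}$ directly; these are equivalent.
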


\begin{proof}
    We first show by induction that \eqref{eq:estSeqIneq} holds. Since $A_0=0$ and $\Psi_0=d_{p+1}(x-x_0)$, it clearly holds for $k=0$. We now assume that inequality \eqref{eq:estSeqIneq} holds  for $k \geq 0$, and prove it for $k+1$. From \eqref{eq:psikPsiIneq}, the induction assumption $\Psi_k^*\geq A_kF(x_k)$, and the subgradient inequality, we obtain
    \begin{align*}
        \Psi_{k+1}^* &= \min_{x\in\dom\psi} \set{\Psi_k(x)+a_{k+1}[\ell_{T_k}(x)+\psi(x)]}\\
        &\geq  \min_{x\in\dom\psi} \set{\Psi_k^*+\sigma_p \|x-\upsilon_k\|^{p+1}+a_{k+1}[\ell_{T_k}(x)+\psi(x)]}\\
        &\geq  \min_{x\in\dom\psi} \set{A_k F(x_k)+a_{k+1}[\ell_{T_k}(x)+\psi(x)]+\sigma_p \|x-\upsilon_k\|^{p+1}}\\
        &\geq  \min_{x\in\dom\psi} \set{A_k F(T_k)+a_{k+1}[f(T_k)+\innprod{\nabla f(T_k)+g}{x-T_k}+\psi(T_k)]+\sigma_p \|x-\upsilon_k\|^{p+1}}\\
        & =  \min_{x\in\dom\psi} \set{A_{k+1}F(T_k)+\innprod{\nabla f(T_k)+g}{a_{k+1}(x-T_k)+A_k(x_k-T_k)}+\sigma_p \|x-\upsilon_k\|^{p+1}}\\
        &\geq  \min_{x\in\dom\psi} \set{A_{k+1}F(T_k)+\innprod{\nabla f(T_k)+g}{a_{k+1}(x-\upsilon_k)+A_{k+1}(y_k-T_k)}+ \sigma_p\|x-\upsilon_k\|^{p+1}},
    \end{align*}
    with $\sigma_p=\tfrac{1}{p+1}\left(\tfrac{1}{2}\right)^{p-1}$. For all $x\in\dom\psi$, we have
    \begin{align*}
        a_{k+1}\innprod{\nabla f(T_k)+g}{x-\upsilon_k}+\tfrac{1}{p+1}\left(\tfrac{1}{2}\right)^{p-1} \|x-\upsilon_k\|^{p+1} \geq -\tfrac{p}{p+1} 2^{\tfrac{p-1}{p}} \left(a_{k+1}\|\nabla f(T_k)+g\|_*\right)^{\tfrac{p+1}{p}}.
    \end{align*}
    It follows from \eqref{eq:solProp3} and $T_k\in\A(y_k,\beta)$ that
    \begin{equation*}
        \innprod{\nabla f(T_k)+g}{y_k-T_k} \geq c_p \|\nabla f(T_k)+g\|_*^{\tfrac{p+1}{p}}.
    \end{equation*}
    Combining the last three inequalities yields
    \begin{equation}\label{eq:psiStarIneq}
        \begin{split}
        \Psi_{k+1}^* &\geq A_{k+1}F(T_k)+c_p A_{k+1} \|\nabla f(T_k)+g\|_*^{\tfrac{p+1}{p}}-\tfrac{p}{p+1} 2^{\tfrac{p-1}{p}} \left(a_{k+1}\|\nabla f(T_k)+g\|_*\right)^{\tfrac{p+1}{p}}\\
        &= A_{k+1}F(T_k)+\left(c_p A_{k+1}-\tfrac{p}{p+1} 2^{\tfrac{p-1}{p}} a_{k+1}^{\tfrac{p+1}{p}}\right) \|\nabla f(T_k)+g\|_*^{\tfrac{p+1}{p}}\\
        &\geq A_{k+1}F(T_k)+\left(c_p A_{k+1}-2a_{k+1}^{\tfrac{p+1}{p}}\right) \|\nabla f(T_k)+g\|_*^{\tfrac{p+1}{p}}.
        \end{split}
    \end{equation}
    On the other hand, from \eqref{eq:Ak}, it can be deduced
    \begin{align*}
        \frac{(A_{k+1}-A_k)^{\tfrac{p+1}{p}}}{A_{k+1}}=\frac{c_p}{2}\frac{\left(\left(\tfrac{k+1}{p+1}\right)^{p+1}-\left(\tfrac{k}{p+1}\right)^{p+1}\right)^{\tfrac{p+1}{p}}}{\left(\tfrac{k+1}{p+1}\right)^{p+1}}=\frac{c_p}{2} \left(\tfrac{k+1}{p+1}-\tfrac{k}{p+1}\left(1-\tfrac{1}{k+1}\right)^p\right)^{\tfrac{p+1}{p}}\leq \frac{c_p}{2},
    \end{align*}
    leading to
    \begin{equation*}
        a_k^{\tfrac{p+1}{p}}\leq \tfrac{c_p}{2} A_{k+1}, \quad k\geq 0.
    \end{equation*}
    Together with \eqref{eq:psiStarIneq} and $f(T_k)\geq F(x_{k+1})$, this ensures $\Psi_{k+1}^*\geq A_{k+1}F(x_{k+1})$, i.e., the assertion~(i) holds. 
    
    Invoking the inequalities \eqref{eq:estSeqIneq} and \eqref{eq:psikPsiIneq}, we come to
    \begin{align*}
        F(x_k)-F^*\leq \tfrac{1}{A_k} d_{p+1}(x_0-x^*) = \left(\tfrac{2}{c_p}\right)^p \left(\tfrac{p+1}{k}\right)^{p+1} d_{p+1}(x_0-x^*),
    \end{align*}
    giving the inequality \eqref{eq:aihoppConv}.
    
    It follows from \eqref{eq:estSeqIneq}, \eqref{eq:psikPsiIneq}, $F(x_k)-F^*\geq 0$, and $x=x^*$ that
    \begin{align*}
        d_{p+1}(x_0-x^*) \geq -A_k F^*+\Psi_k^*+\left(\tfrac{1}{2}\right)^{p-1} d_{p+1}(\upsilon_k-x^*)\geq (F(x_k)-F^*)+\left(\tfrac{1}{2}\right)^{p-1}d_{p+1}(\upsilon_k-x^*),
    \end{align*}
    which leads to the assertion (iii).
\end{proof}

\section{Bi-level optimization framework}\label{sec:biLevel}
As we have seen in  the previous sections, solving the  convex composite problem \eqref{eq:prob} by an inexact high-order proximal-point method involves two steps: (i) choosing a $p$th-order proximal-point method as an upper-level scheme; (ii) choosing a lower-level method for computing a point $T\in\A(\bar x, \beta)$. This gives us two degrees of freedom in the strategy of finding a solution to the problem \eqref{eq:prob}. This is why we call this framework  \textit{Bi-level OPTimization} (BiOPT). At the upper level, we do not need to impose any assumption on the objective $F(\cdot)$ apart from its convexity. At the lower-level method, we need some additional assumption on this objective function. Moreover, in the BiOPT setting, the complexity of a scheme leans on the complexity of both upper- and lower-level methods.

On the basis of the results of Section \ref{sec:ptensor}, the auxiliary problem \eqref{eq:prox} can be solved by applying one step of the $p$th-order tensor method. This demands the computation of $i$th ($i=1,\ldots,p$) directional derivatives of function $f(\cdot)$ and the condition \eqref{eq:approxTensor}, which might not be practical in general. Therefore, we could try to apply a lower-order method to the auxiliary problem \eqref{eq:prox}, which leads to an efficient implementation of the BiOPT framework. This is the main motivation of the following sections.

\subsection{Non-Euclidean composite gradient method}\label{sec:nepgm}
Let us assume that $k$ is a fixed iteration of either Algorithm \ref{alg:ihoppa} or Algorithm \ref{alg:aihoppa}, and we need to compute an {\it acceptable solution} $z_k$ of \eqref{eq:prox} satisfying \eqref{eq:A}. To do so, we introduce a non-Euclidean composite gradient method and analyze the convergence properties of the sequence $\{z_i\}_{i\geq 0}$ generated by this scheme, which 
satisfies in the limit inequality \eqref{eq:A}. Our main tool for such developments is the \textit{relative smoothness condition} (see \cite{bauschke2016descent,lu2018relatively} for more details and examples).  

Notice that an acceptable solution of the auxiliary problem \eqref{eq:prox} requires that the function $\func{\varphi_k}{\E}{\R}$ given by
\begin{equation}\label{eq:psik}
    \varphi_k(z)=f_{y_k,H}^p(z)+\psi(z), \quad \forall k\geq 0,~ z\in\dom \psi
\end{equation}
be minimized approximately, delivering a point $y_k\in\dom\psi$, satisfying the inequality \eqref{eq:A} holds for given . 

Let us consider a simple example in which $\func{f}{\R}{\R}$ with $f \equiv 0$ and $y_k=0$. Then, the function $\func{f_{0,H}^2}{\R}{\R}$ defined as $f_{0,H}^2(z)=\tfrac{1}{3}|z|^3$
with $\nabla f_{0,H}^2(z)=|z|z$, which is not Lipschitz continuous. This shows that one cannot expect the Lipschitz smoothness of $f_{y_k,H}^p(\cdot)$ for $p\geq 2$. However, it can be shown that this function belongs to a wider class of functions called {\it relatively smooth}.

Let function $\func{\rho}{\E}{\R}$ be closed, convex, and differentiable. We call it a {\it scaling function}. Now, the non-symmetric \textit{Bregman} distance function $\func{\beta_\rho}{\E\times\E}{\R}$ 
with respec to $\rho$ is given by
\begin{equation}\label{eq:BregmanDist}
    \beta_\rho(x,y)=\rho(y)-\rho(x)-\innprod{\nabla \rho(x)}{y-x}.
\end{equation}
For $x, y, z\in\E$, it is easy to see (e.g., the proof of Lemma 3 in \cite{nesterov2019inexact}) that
\begin{equation}\label{eq:threepointIneq}
    \beta_\rho(x,z)-\beta_\rho(y,z)+\beta_\rho(y,x)=\innprod{\nabla \rho(y)-\nabla \rho(x)}{z-x}.
\end{equation}
For a convex function $\func{h}{\E}{\R}$, we say that $h(\cdot)$ is \textit{$L_h$-smooth relative to $\rho(\cdot)$} if there exists a constant $L_h>0$ such that $(L_h\rho-h)(\cdot)$ is convex, and we call it \textit{$\mu_h$-strongly convex relative to $\rho(\cdot)$} if there exists $\mu_h>0$ such that $(h-\mu\rho)(\cdot)$ is convex; cf. \cite{bauschke2016descent,lu2018relatively}. The constant $\kappa_h=\mu_h/L_h$ is called the \textit{condition number} of $h(\cdot)$ relative to the scaling function $\rho(\cdot)$.

In the following lemma, we characterize the latter two conditions.

\begin{lem}\cite[Proposition 1.1]{lu2018relatively}
\label{lem:sigmaL}%
	~The following assertions are equivalent:
	\begin{description}
	\item[(i)]
		\(h(\cdot)\) is \(L_h\)-smooth and \(\mu_h\)-strongly convex relative to the scaling function $\rho(\cdot)$;
	\item[(ii)]\label{thm:LipIneq}%
		\(
			\mu_h\beta_\rho(x,y)
		{}\leq{}
			h(x)-h(y)-\innprod{\nabla h(y)}{x-y}
		{}\leq{}
			L_h\beta_\rho(x,y);
		\)
	\item[(iii)]
		\(
			\mu_h\innprod{\nabla \rho(y)-\nabla \rho(x)}{y-x}
		{}\leq{}
			\innprod{\nabla h(y)-\nabla h(x)}{y-x}
		{}\leq{}
			L_h\innprod{\nabla \rho(y)-\nabla \rho(x)}{y-x};
		\)
	\item[(iv)]
		\(
			\mu_h\nabla^2\rho(x)
		{}\preceq{}
			\nabla^2 h(x)
		{}\preceq{}
			L_h\nabla^2\rho(x)
		\).
	\end{description}
\end{lem}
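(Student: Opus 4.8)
The plan is to observe that condition (i) asserts precisely that the two auxiliary functions
\[
\phi_1 := h-\mu_h\rho, \qquad \phi_2 := L_h\rho - h
\]
are both convex, and that each of (ii), (iii), (iv) is merely this joint convexity re-expressed through one of the three classical equivalent characterizations of convexity of a differentiable function $\phi$: the subgradient inequality $\phi(x)\geq\phi(y)+\innprod{\nabla\phi(y)}{x-y}$; the monotonicity of the gradient $\innprod{\nabla\phi(x)-\nabla\phi(y)}{x-y}\geq 0$; and, when $\phi$ is twice differentiable, positive semidefiniteness $\nabla^2\phi(x)\succeq 0$. Since $(L_h\rho-h)$ convex is the definition of $L_h$-smoothness and $(h-\mu_h\rho)$ convex is the definition of $\mu_h$-strong convexity relative to $\rho$, I would prove the three equivalences (i)$\Leftrightarrow$(ii), (i)$\Leftrightarrow$(iii), (i)$\Leftrightarrow$(iv) in parallel, each time applying one characterization to $\phi_1$ to obtain the lower ($\mu_h$) bound and to $\phi_2$ to obtain the upper ($L_h$) bound.

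For (i)$\Leftrightarrow$(ii) I would apply the subgradient inequality. Convexity of $\phi_1$ reads $h(x)-\mu_h\rho(x)\geq h(y)-\mu_h\rho(y)+\innprod{\nabla h(y)-\mu_h\nabla\rho(y)}{x-y}$, which after rearranging becomes $h(x)-h(y)-\innprod{\nabla h(y)}{x-y}\geq\mu_h\big(\rho(x)-\rho(y)-\innprod{\nabla\rho(y)}{x-y}\big)$; by the definition \eqref{eq:BregmanDist} the right-hand parenthesis is a Bregman term associated with $\rho$, yielding the lower inequality of (ii). Applying the same inequality to $\phi_2$ gives the upper inequality. The converse is immediate: each half of (ii) is exactly the subgradient inequality for the corresponding auxiliary function, so (ii) forces both $\phi_1$ and $\phi_2$ to be convex, which is (i).

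For (i)$\Leftrightarrow$(iii) I would instead use gradient monotonicity. Convexity of $\phi_1$ gives $\innprod{\nabla h(x)-\nabla h(y)-\mu_h(\nabla\rho(x)-\nabla\rho(y))}{x-y}\geq 0$, which rearranges to the lower bound of (iii), while $\phi_2$ produces the upper bound. For (i)$\Leftrightarrow$(iv), under the standing assumption that $h$ and $\rho$ are twice differentiable, I would use the Hessian characterization: $\nabla^2\phi_1(x)\succeq 0$ and $\nabla^2\phi_2(x)\succeq 0$ read $\nabla^2 h(x)\succeq\mu_h\nabla^2\rho(x)$ and $\nabla^2 h(x)\preceq L_h\nabla^2\rho(x)$, which together are exactly (iv). In each case both directions follow because a differentiable (resp. $C^2$) function is convex if and only if its gradient is monotone (resp. its Hessian is positive semidefinite).

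I expect no genuinely hard step here; the argument is a systematic unpacking of definitions. The only points demanding care are bookkeeping ones: keeping the order of the arguments of $\beta_\rho$ aligned with \eqref{eq:BregmanDist} (which point serves as the linearization point and which as the evaluation point), and flagging that the equivalence involving (iv) is valid only when $h$ and $\rho$ are twice differentiable, whereas (i)--(iii) hold under mere differentiability. These are precisely the places where an otherwise routine chain of rearrangements could slip in a sign or index error.
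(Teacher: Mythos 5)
Your proof is correct. The paper itself offers no proof of this lemma --- it is quoted verbatim from the cited reference \cite[Proposition 1.1]{lu2018relatively} --- and your argument (reading (i) as joint convexity of $h-\mu_h\rho$ and $L_h\rho-h$, then applying the three classical characterizations of convexity to each) is exactly the standard one used there. Your bookkeeping caveat is well placed: with the paper's convention $\beta_\rho(x,y)=\rho(y)-\rho(x)-\innprod{\nabla\rho(x)}{y-x}$, the quantity produced by your derivation in (ii) is $\beta_\rho(y,x)$ rather than the $\beta_\rho(x,y)$ printed in the statement, so the lemma as transcribed carries a swapped-argument typo that your proof correctly identifies rather than reproduces.
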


Let us introduce the following assumptions on the minimization problem \eqref{eq:psik}:
\begin{description}
    \item[{\bf(H1)}] $\rho(\cdot)$ is uniformly convex of degree $p+1$ with the modulus $\sigma>0$, i.e., $\beta_\rho(x,y)\geq \tfrac{\sigma}{p+1} \|y-x\|^{p+1}$;
    \item[{\bf(H2)}] there exist constants $\mu, L\geq 0$ such that the function $f_{y_k,H}^p(\cdot)$ is $L$-smooth and $\mu$-strongly convex relative to the scaling function $\rho(\cdot)$.
\end{description}
In this subsection, for the sake of generality, we assume the existence of the scaling function $\rho(\cdot)$ such that the conditions (H1)-(H2) hold; however, in Section~\ref{sec:supFastPth} we introduce a specific scaling function satisfying (H1)-(H2).

We are in position now to develop a non-Euclidean composite gradient scheme for minimizing \eqref{eq:psik} based on the assumptions (H1)-(H2). For given $y_k,z_i\in\dom\psi$ and $H, L>0$,  we introduce the non-Euclidean composite gradient scheme
\begin{equation}\label{eq:zk1}
    z_{i+1} = \argmin_{z\in\E}\set{\innprod{\nabla f_{y_k,H}^p(z_i)}{z-z_i}+\psi(z)+2L \br(z_i,z)},
\end{equation}
which is first-order method and the point $z_k^*$ denotes the optimal solution of \eqref{eq:zk1}. Note that the first-order optimality conditions for \eqref{eq:zk1} leads to the following variational principle
\begin{equation}\label{eq:varPrinciple}
    \innprod{\nabla f_{y_k,H}^p(z_i)+2L(\nabla\rho(z_{i+1})-\nabla\rho(z_i))}{z-z_{i+1}}+\psi(z)\geq \psi(z_{i+1}).
\end{equation}

For the sequence $\{z_i\}_{i\geq 0}$ generated by the scheme \eqref{eq:zk1}, we next show the monotonicity of the sequence $\{\varphi_k(z_i)\}_{i\geq 0}$.

\begin{lem}[non-Euclidean composite gradient inequalities]\label{lem:monotoneIneq} 
    Let $\{z_i\}_{i\geq 0}$ be generated by the scheme \eqref{eq:zk1}. Then, it holds that
    \begin{equation}\label{eq:monotonezk}
        \varphi_k(z_{i+1}) \leq \varphi_k(z_i)-L \br(z_i,z_{i+1}).
    \end{equation}
    Moreover, we have
    \begin{equation}\label{eq:betaRhoIneq1}
        \beta_{\rho}(z_{i+1},z) \leq \left(1-\tfrac{\kappa}{2}\right)^{i+1} \beta_{\rho}(z_0,z) +\tfrac{1}{2L}\left(\varphi_k(z)-\varphi_k(z_{i+1})\right).
    \end{equation}
\end{lem}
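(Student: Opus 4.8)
The plan is to obtain both inequalities from three ingredients: the optimality characterization \eqref{eq:varPrinciple} of the inner step $z_{i+1}$, the relative smoothness and strong convexity of $f_{y_k,H}^p(\cdot)$ granted by (H2) via Lemma~\ref{lem:sigmaL}, and the Bregman three-point identity \eqref{eq:threepointIneq}. Throughout I write $b_i=\beta_\rho(z_i,z)$ and $q=1-\tfrac{\kappa}{2}$, recalling $\kappa=\mu/L$.

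For the descent inequality \eqref{eq:monotonezk}, I would specialize \eqref{eq:varPrinciple} to $z=z_i$, which bounds $\psi(z_{i+1})-\psi(z_i)$ by $\innprod{\nabla f_{y_k,H}^p(z_i)}{z_i-z_{i+1}}$ plus the term $2L\innprod{\nabla\rho(z_{i+1})-\nabla\rho(z_i)}{z_i-z_{i+1}}$, which \eqref{eq:BregmanDist} rewrites as $-2L\big(\beta_\rho(z_i,z_{i+1})+\beta_\rho(z_{i+1},z_i)\big)$. Adding the relative-smoothness upper bound $f_{y_k,H}^p(z_{i+1})\leq f_{y_k,H}^p(z_i)+\innprod{\nabla f_{y_k,H}^p(z_i)}{z_{i+1}-z_i}+L\beta_\rho(z_i,z_{i+1})$ makes the gradient inner products cancel, leaving $\varphi_k(z_{i+1})\leq\varphi_k(z_i)-L\beta_\rho(z_i,z_{i+1})-2L\beta_\rho(z_{i+1},z_i)$; discarding the nonnegative $\beta_\rho(z_{i+1},z_i)$ yields \eqref{eq:monotonezk}.

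For the distance recursion I would invoke \eqref{eq:varPrinciple} at a general $z$ and split $\innprod{\nabla f_{y_k,H}^p(z_i)}{z-z_{i+1}}$ into $\innprod{\cdot}{z-z_i}$ and $\innprod{\cdot}{z_i-z_{i+1}}$. The first piece I bound from above by relative strong convexity as $f_{y_k,H}^p(z)-f_{y_k,H}^p(z_i)-\mu\beta_\rho(z_i,z)$, and the second by relative smoothness as $f_{y_k,H}^p(z_i)-f_{y_k,H}^p(z_{i+1})+L\beta_\rho(z_i,z_{i+1})$, thereby reintroducing $\varphi_k(z_{i+1})$. Rewriting the residual $\rho$-inner product through \eqref{eq:threepointIneq} as $\beta_\rho(z_i,z)-\beta_\rho(z_{i+1},z)-\beta_\rho(z_i,z_{i+1})$ and collecting terms gives $\varphi_k(z_{i+1})\leq\varphi_k(z)+(2L-\mu)\beta_\rho(z_i,z)-2L\beta_\rho(z_{i+1},z)-L\beta_\rho(z_i,z_{i+1})$. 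Dropping the last nonnegative term and dividing by $2L$ produces the one-step contraction $b_{i+1}\leq q\,b_i+\tfrac{1}{2L}\big(\varphi_k(z)-\varphi_k(z_{i+1})\big)$.

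It then remains to unroll this recursion into \eqref{eq:betaRhoIneq1}, which I would do by induction on $i$: assuming $b_i\leq q^i b_0+\tfrac{1}{2L}(\varphi_k(z)-\varphi_k(z_i))$, the one-step bound gives $b_{i+1}\leq q^{i+1}b_0+\tfrac{q}{2L}(\varphi_k(z)-\varphi_k(z_i))+\tfrac{1}{2L}(\varphi_k(z)-\varphi_k(z_{i+1}))$, so the claim follows once the surplus $\varphi_k(z)-\varphi_k(z_i)$ is nonpositive. This is precisely where the first part is used: \eqref{eq:monotonezk} shows $\{\varphi_k(z_i)\}$ is nonincreasing, so for the relevant target $z=z_k^*$ (the minimizer of $\varphi_k$) one has $\varphi_k(z)\leq\varphi_k(z_i)$ for every $i$ and the induction closes. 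I expect the delicate point to be exactly this collapse of the accumulated objective corrections into the single term $\tfrac{1}{2L}(\varphi_k(z)-\varphi_k(z_{i+1}))$—rather than a looser geometric sum carrying the factor $\tfrac{1}{\kappa L}$—together with keeping the Bregman argument orders straight throughout.
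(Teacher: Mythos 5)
Your proposal is correct and follows essentially the same route as the paper: the variational inequality \eqref{eq:varPrinciple}, the relative smoothness and strong convexity bounds of Lemma~\ref{lem:sigmaL}(ii), and the three-point identity \eqref{eq:threepointIneq} combine into exactly the one-step contraction $\beta_\rho(z_{i+1},z)\leq\left(1-\tfrac{\kappa}{2}\right)\beta_\rho(z_i,z)+\tfrac{1}{2L}\left(\varphi_k(z)-\varphi_k(z_{i+1})\right)$ that the paper derives, and your derivation of \eqref{eq:monotonezk} from \eqref{eq:varPrinciple} at $z=z_i$ is an equivalent (indeed slightly stronger) variant of the paper's comparison of objective values at $z_{i+1}$ and $z_i$. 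The one substantive difference is that you make explicit the condition $\varphi_k(z)\leq\varphi_k(z_i)$ needed to collapse the unrolled geometric sum into the single correction term of \eqref{eq:betaRhoIneq1} --- a point the paper's ``$\leq\cdots\leq$'' passes over silently --- and this added care is warranted, since the recursion alone yields \eqref{eq:betaRhoIneq1} only for targets $z$ satisfying $\varphi_k(z)\leq\inf_i\varphi_k(z_i)$, such as the minimizer $z_k^*$.
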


\begin{proof}
    Since $z_{i+1}$ is a solution of \eqref{eq:zk1}, it holds that 
    \begin{align*}
        \innprod{\nabla f_{y_k,H}^p(z_i)}{z_{i+1}-z_i}+\psi(z_{i+1})+2L \br(z_i,z_{i+1}) \leq \psi(z_i).
    \end{align*}
    Together with the $L$-smoothness of $f_{y_k,H}^p(\cdot)$ relative to $\rho(\cdot)$, this implies
    \begin{align*}
        f_{y_k,H}^p(z_{i+1}) &\leq f_{y_k,H}^p(z_i) + \innprod{\nabla f_{y_k,H}^p(z_i)}{z_{i+1}-z_i}+L \br(z_i,z_{i+1})\\
        &\leq f_{y_k,H}^p(z_i) + \psi(z_i)-\psi(z_{i+1})-L \br(z_i,z_{i+1}),
    \end{align*}
    giving \eqref{eq:monotonezk}.
    
    Setting $x=z_{i+1}$ and $y=z_i$ in the three point inequality \eqref{eq:threepointIneq} and applying the inequality \eqref{eq:varPrinciple}, it can be concluded that
    \begin{align*}
        \beta_{\rho}(z_{i+1},z) -\beta_{\rho}(z_i,z) &= \innprod{\nabla \rho(z_i)-\nabla \rho(z_{i+1})}{z-z_{i+1}}-\beta_{\rho}(z_i,z_{i+1})\\
        &\leq \tfrac{1}{2L}\left[\innprod{\nabla f_{y_k,H}^p(z_i)}{z-z_{i+1}}+\psi(z)-\psi(z_{i+1})\right]-\beta_{\rho}(z_i,z_{i+1})\\
        &= \tfrac{1}{2L}\left[f_{y_k,H}^p(z_i)+\innprod{\nabla f_{y_k,H}^p(z_i)}{z-z_i}+\psi(z)\right]\\
        &~~~- \tfrac{1}{2L}\left[f_{y_k,H}^p(z_i)+\innprod{\nabla f_{y_k,H}^p(z_i)}{z_{i+1}-z_i}+\psi(z_{i+1})\right]-\beta_{\rho}(z_i,z_{i+1})\\
        &\leq \tfrac{1}{2L}\left[f_{y_k,H}^p(z_i)+\innprod{\nabla f_{y_k,H}^p(z_i)}{z-z_i}+\psi(z)-\varphi_k(z_{i+1})\right]\\
        &=\tfrac{1}{2L}\left[\varphi_k(z)-\varphi_k(z_{i+1})-\mu\beta_{\rho}(z_i,z)\right].
    \end{align*}
    Accordingly, we get
    \begin{align*}
        \beta_{\rho}(z_{i+1},z) &\leq \left(1-\tfrac{\kappa}{2}\right) \beta_{\rho}(z_i,z) +\tfrac{1}{2L}\left(\varphi_k(z)-\varphi_k(z_{i+1})\right)\\
        &\leq \cdots \leq\left(1-\tfrac{\kappa}{2}\right)^{i+1} \beta_{\rho}(z_0,z) +\tfrac{1}{2L}\left(\varphi_k(z)-\varphi_k(z_{i+1})\right),
    \end{align*}
    justifying the inequality \eqref{eq:betaRhoIneq1}.
\end{proof}

In summary, we come to the following non-Euclidean composite gradient algorithm.

\vspace{2mm}
\RestyleAlgo{boxruled}
\begin{algorithm}[H]
\DontPrintSemicolon \KwIn{$z_{0}=y_k\in \dom \psi$,~ $\beta\in[0,1/p]$,~ $L>0$,~ $i=0$;} 
\Begin{ 
    \Repeat{$\|\nabla f_{y_k, H}^p(z_i)+g\|_*\leq \beta \|\nabla f(z_i)+g\|_*$}{  
        Compute $z_{i+1}$ by \eqref{eq:zk1};\;
        Set $g=L(\nabla \rho(z_i)-\nabla \rho(z_{i+1}))-\nabla f_{y_k,H}^p(z_i)\in\partial\psi(z_{i+1})$ and $i=i+1$;
     } 
     $i_k^* = i$;\;
}
\KwOut{$z_k=z_{i_k^*}$ and $g=L(\nabla \rho(z_{i_k^*-1})-\nabla \rho(z_{i_k^*}))-\nabla f_{y_k,H}^p(z_{i_k^*-1})\in\partial\psi(z_{i+1})$.}
\caption{ Non-Euclidean Composite Gradient Algorithm\label{alg:zk1}}
\end{algorithm}
\vspace{2mm}

We now assume that the auxiliary problem \eqref{eq:zk1} can be solved exactly. For the sequence $\{z_i\}_{i\geq 0}$ given by \eqref{eq:zk1}, we will stop the scheme as soon as $\|\nabla f_{y_k, H}^p(z_{i+1})+g\|_*\leq \beta \|\nabla f(z_{i+1})+g\|_*$ holds, and then we set $z_k=z_{i+1}$. In the remainder of this section, we show that the stopping criterion holds for $i$ large enough. 

Setting $z=z_0$ in the inequality \eqref{eq:betaRhoIneq1}, it follows the $(p+1)$-uniform convexity of $\rho(\cdot)$ that
\begin{align*}
        \|z_0-z_{i+1}\|^{p+1} &\leq \tfrac{p+1}{\sigma} \beta_{\rho}(z_{i+1},z_0) \leq  \tfrac{p+1}{2\sigma L}\left(\varphi_k(z_0)-\varphi_k(z_{i+1})\right) 
        \leq \tfrac{p+1}{2\sigma L}\left(\varphi_k(z_0)-\inf \varphi_k\right)\\
        &\leq  \tfrac{p+1}{2\sigma L}\left(F(y_k)-F^*\right)<+\infty,\quad \forall i\in\mathbb{N}.
    \end{align*}
Let us define the bounded convex set
\begin{equation}\label{eq:levelSetPsik}
    \mathcal{L}_k(z_0, \Delta_k) = \set{z\in\E ~:~ \|z_0-z\|\leq \Delta_k,~\varphi_k(z)\leq \varphi_k(z_0)}, \quad \Delta_k=\left(\tfrac{p+1}{2\sigma L}\left(F(y_k)-F^*\right)\right)^{1/(p+1)},
\end{equation}
i.e., $\{z_i\}_{i\geq 0} \subseteq \mathcal{L}_k(z_0, \Delta_k)$.

The next results shows that the sequence $\{\dist(0,\partial \varphi_k(z_i))\}_{i\geq 0}$ vanishes, for $\{z_i\}_{i\geq 0}$ generated by Algorithm \ref{alg:zk1}. For doing so, we also require that
\begin{description}
    \item[{\bf(H3)}] $\|\nabla^2\rho(\cdot)\|\leq \overline L$ on the set $\mathcal{L}_k(z_0, \Delta_k)$ with $\overline L>0$.
\end{description}


\begin{lem}[subsequential convergence]\label{lem:subseqConv}
    Let $\{z_i\}_{i\geq 0}$ be generated by Algorithm \ref{alg:zk1}. If (H1)--(H3) hold, then 
    \begin{equation}\label{eq:Gi1Ineq}
        \varphi_k(z_i)-\varphi_k(z_{i+1}) \geq C\|\mathcal{G}_{i+1}\|_*^{p+1}, \quad C=\tfrac{L\sigma}{(p+1)(L-\mu)^{p+1} \overline L^{p+1}},
    \end{equation}
    where 
    \begin{equation}\label{eq:Gi1}
        \mathcal{G}_{i+1} = \nabla f_{y_k,H}^p(z_{i+1})+g,\quad g=L(\nabla \rho(z_i)-\nabla \rho(z_{i+1}))-\nabla f_{y_k,H}^p(z_i).
    \end{equation}
    This consequently implies
    \begin{equation}\label{eq:limSubdiff}
        \lim_{i\to +\infty } \dist(0,\partial \varphi_k(z_{i+1})) = 0.
    \end{equation}
\end{lem}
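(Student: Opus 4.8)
The plan is to establish the key per-iteration inequality \eqref{eq:Gi1Ineq} first, and then deduce the limit \eqref{eq:limSubdiff} as a routine consequence. To bound $\|\mathcal{G}_{i+1}\|_*$ from above in terms of a quantity controlled by the monotone decrease, I would start from the defining expression \eqref{eq:Gi1}. Writing $\mathcal{G}_{i+1} = \nabla f_{y_k,H}^p(z_{i+1}) + L(\nabla\rho(z_i)-\nabla\rho(z_{i+1})) - \nabla f_{y_k,H}^p(z_i)$, I would add and subtract so as to form the difference $\nabla f_{y_k,H}^p(z_{i+1})-\nabla f_{y_k,H}^p(z_i)$ together with the Bregman-gradient term $L(\nabla\rho(z_i)-\nabla\rho(z_{i+1}))$. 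The first piece is controlled by the relative smoothness from (H2): via Lemma~\ref{lem:sigmaL}(iii) (or the mean-value form using (H3)), the map $L\rho - f_{y_k,H}^p$ is convex, so $\nabla f_{y_k,H}^p(z_{i+1})-\nabla f_{y_k,H}^p(z_i)$ and $L(\nabla\rho(z_{i+1})-\nabla\rho(z_i))$ differ in a controlled way, leaving an $(L-\mu)$ factor. Using (H3) to bound $\|\nabla\rho(z_{i+1})-\nabla\rho(z_i)\|_* \leq \overline L\,\|z_{i+1}-z_i\|$ then yields an estimate of the shape $\|\mathcal{G}_{i+1}\|_* \leq (L-\mu)\overline L\,\|z_{i+1}-z_i\|$.

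Next I would connect $\|z_{i+1}-z_i\|$ to the guaranteed decrease. By the monotonicity inequality \eqref{eq:monotonezk}, $\varphi_k(z_i)-\varphi_k(z_{i+1}) \geq L\,\beta_\rho(z_i,z_{i+1})$, and the $(p+1)$-uniform convexity of $\rho$ from (H1) gives $\beta_\rho(z_i,z_{i+1}) \geq \tfrac{\sigma}{p+1}\|z_{i+1}-z_i\|^{p+1}$. Combining, $\varphi_k(z_i)-\varphi_k(z_{i+1}) \geq \tfrac{L\sigma}{p+1}\|z_{i+1}-z_i\|^{p+1}$. Substituting the upper bound $\|z_{i+1}-z_i\| \geq \big(\|\mathcal{G}_{i+1}\|_*/((L-\mu)\overline L)\big)$ into this lower bound produces exactly
\[
\varphi_k(z_i)-\varphi_k(z_{i+1}) \geq \frac{L\sigma}{(p+1)(L-\mu)^{p+1}\overline L^{p+1}}\,\|\mathcal{G}_{i+1}\|_*^{p+1},
\]
which is \eqref{eq:Gi1Ineq} with the stated constant $C$.

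For the limit \eqref{eq:limSubdiff}, I would observe that $\mathcal{G}_{i+1} \in \partial\varphi_k(z_{i+1})$: indeed $g = L(\nabla\rho(z_i)-\nabla\rho(z_{i+1}))-\nabla f_{y_k,H}^p(z_i) \in \partial\psi(z_{i+1})$ by the variational principle \eqref{eq:varPrinciple}, and adding $\nabla f_{y_k,H}^p(z_{i+1})$ gives a subgradient of $\varphi_k = f_{y_k,H}^p+\psi$ at $z_{i+1}$. Hence $\dist(0,\partial\varphi_k(z_{i+1})) \leq \|\mathcal{G}_{i+1}\|_*$. Since $\{\varphi_k(z_i)\}_{i\geq 0}$ is nonincreasing by \eqref{eq:monotonezk} and bounded below by $\inf\varphi_k \geq F^* > -\infty$, it converges, so the telescoping sum $\sum_i (\varphi_k(z_i)-\varphi_k(z_{i+1}))$ is finite; by \eqref{eq:Gi1Ineq} this forces $\sum_i \|\mathcal{G}_{i+1}\|_*^{p+1} < +\infty$, whence $\|\mathcal{G}_{i+1}\|_* \to 0$ and therefore $\dist(0,\partial\varphi_k(z_{i+1})) \to 0$.

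The main obstacle I anticipate is the first step: cleanly extracting the factor $(L-\mu)$ in the bound $\|\mathcal{G}_{i+1}\|_* \leq (L-\mu)\overline L\,\|z_{i+1}-z_i\|$. This requires using \emph{both} the relative smoothness and relative strong convexity of $f_{y_k,H}^p$ simultaneously (the two-sided bound in Lemma~\ref{lem:sigmaL}), rather than smoothness alone, since the naive estimate would only yield $L\overline L$; the improvement to $(L-\mu)$ comes from recognizing that $\mathcal{G}_{i+1}$ measures the gradient of $(f_{y_k,H}^p - \mu\rho)$ against the scaled Bregman step, and $(L-\mu)\rho - (f_{y_k,H}^p-\mu\rho)$ is the relevant convex function whose gradient variation is controlled by $(L-\mu)$ times that of $\rho$, bounded in turn via (H3).
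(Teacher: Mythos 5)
Your proposal is correct and follows essentially the same route as the paper: decompose $\mathcal{G}_{i+1}$ as $L(\nabla\rho(z_{i+1})-\nabla\rho(z_i))$ minus the gradient difference of $f_{y_k,H}^p$, bound it by $(L-\mu)\overline{L}\,\|z_{i+1}-z_i\|$ using the two-sided relative bounds of Lemma~\ref{lem:sigmaL} together with (H3) (the paper does this via an integral mean-value representation of $-\mathcal{G}_{i+1}$ and Lemma~\ref{lem:sigmaL}(iv)), then combine \eqref{eq:monotonezk} with the $(p+1)$-uniform convexity from (H1) and finish by telescoping. No meaningful differences.
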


\begin{proof}
    Writing the first-order optimality conditions for the minimization problem \eqref{eq:zk1}, there exists $g\in\partial \psi(z_{i+1})$ such that
    \begin{align*}
        \nabla f_{y_k,H}^p(z_i)+g+L(\nabla \rho(z_{i+1})-\nabla \rho(z_i))=0,
    \end{align*}
    leading to
    \begin{align*}
        g=L(\nabla \rho(z_i)-\nabla \rho(z_{i+1}))-\nabla f_{y_k,H}^p(z_i).
    \end{align*}
    From the convexity of $f_{y_k,H}^p(\cdot)$ and $\psi(\cdot)$, we obtain $\partial \varphi_k (\cdot) = \nabla f_{y_k,H}^p(\cdot)+\partial \psi(\cdot)$, i.e., 
    \begin{align*}
        \mathcal{G}_{i+1} = \nabla f_{y_k,H}^p(z_{i+1})+g\in \partial \varphi_k (z_{i+1}).
    \end{align*}
    From the Lipschitz continuity of $\nabla f_{y_k,H}^p(\cdot)$ and $\nabla \rho(\cdot)$ on the bounded set $\mathcal{L}_k(z_0, \Delta_k)$, we obtain
    \begin{align*}
        -\mathcal{G}_{i+1} &= L (\nabla \rho(z_{i+1})-\nabla \rho(z_i)) - (\nabla f_{y_k,H}^p(z_{i+1})-\nabla f_{y_k,H}^p(z_i))\\
        &= \int_0^1 [(L \nabla^2\rho-\nabla^2 f_{y_k,H}^p)(z_i+\tau(z_{i+1}-z_i))](z_{i+1}-z_i)d\tau.
    \end{align*}
    Together with (H2), (H4), and Lemma~\ref{lem:sigmaL}(iv), this leads to 
    \begin{align*}
        \|\mathcal{G}_{i+1}\|_* &\leq \|(L \nabla^2\rho-\nabla^2 f_{y_k,H}^p)(z)\| \|z_{i+1}-z_i\| \leq (L-\mu)\|\nabla^2\rho(z)\| \|z_{i+1}-z_i\|\\
        & \leq (L-\mu) \overline L \|z_{i+1}-z_i\| \leq (L-\mu) \overline L \left(\tfrac{p+1}{\sigma} \br(z_i,z_{i+1})\right)^{1/(p+1)},
    \end{align*}
    where the last inequality comes from the uniform convexity of $\rho(\cdot)$ of order $p+1$. Thus, it can be concluded from \eqref{eq:monotonezk} that
    \begin{align*}
       \varphi_k(z_i)- \varphi_k(z_{i+1}) \geq L \br(z_i,z_{i+1})\geq \tfrac{L\sigma}{(p+1)(L-\mu)^{p+1} \overline L^{p+1}} \|\mathcal{G}_{i+1}\|_*^{p+1},
    \end{align*}
    giving \eqref{eq:Gi1Ineq}. Thus, $C\sum \|\mathcal{G}_{i+1}\|_*^{p+1} \leq \varphi_k(y_k)-\inf \varphi_k\leq F(y_k)-F^*<+\infty$, i.e., $\lim_{i\to\infty}\|\mathcal{G}_{i+1}\|=0$. Together with the inequality $\dist(0,\partial \varphi_k(z_{i+1}))\leq \|\mathcal{G}_{i+1}\|$, this implies \eqref{eq:limSubdiff}.
\end{proof}

We now show the well-definedness and complexity of Algorithm \ref{alg:zk1} in the subsequent result. 

\begin{thm}[well-definedness of Algorithm \ref{alg:zk1}]
\label{thm:wellDefinedAlg3}
Let us assume that all conditions of Lemma \ref{lem:subseqConv} hold, let $\{z_i\}_{i\geq 0}$ be a sequence generated by Algorithm~\ref{alg:zk1}, and let
    \begin{equation}\label{eq:assumPsiPsi*}
        F(z_i)-F(x^*)\geq \varepsilon, \quad \forall  i\geq 0,
    \end{equation}
    where $x^*$ is a minimizer of $F$ and $\varepsilon>0$ is the accuracy parameter. Moreover, assume that there exists a constant $D>0$ such that $\|z_i-x^*\|\leq D$ for all $i\geq 0$.
    Then, for the subgradients 
    \begin{align*}
        \mathcal{G}_{i_k^*} = \nabla f_{y_k,H}^p(z_{i_k^*})+g\in \partial \varphi_k (z_{i_k^*}),\quad g=L(\nabla \rho(z_{i_k^*-1})-\nabla \rho(z_{i_k^*}))-\nabla f_{y_k,H}^p(z_{i_k^*-1}) \in\partial\psi(z_{i_k^*}),
    \end{align*}
    and $z_{i_k^*}\in\dom \psi$, the maximum number of iterations $i_k^*$ needed to guarantee the inequality
    \begin{equation}\label{eq:AIneqNEPA2}
        \|\mathcal{G}_{i_k^*}\|_* \leq \beta \|\nabla f(z_{i_k^*})+g\|_*
    \end{equation}
    satisfies 
    \begin{equation}\label{eq:lowerBoundi1}
        i_k^*\leq 1+ \tfrac{2(p+1)}{\kappa}\log\left(\tfrac{\tfrac{D}{\beta} \left(\tfrac{2L}{C}\beta_{\rho}(z_0,z_k^*)\right)^{1/(p+1)}}{ \varepsilon} \right),
    \end{equation}
    where $C$ is defined in \eqref{eq:Gi1Ineq} and $\varepsilon>0$ is the accuracy parameter.
\end{thm}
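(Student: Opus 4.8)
The plan is to prove that the subgradient norm $\|\mathcal{G}_{i+1}\|_*$ decays geometrically and then to locate the first index at which it falls below the threshold $\beta\varepsilon/D$, at which point the stopping test \eqref{eq:AIneqNEPA2} must fire.

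First I would squeeze two estimates out of \eqref{eq:betaRhoIneq1} by taking $z=z_k^*$ there, with $z_k^*$ the (unique) minimizer of $\varphi_k$. Since $\varphi_k(z_k^*)-\varphi_k(z_{i+1})\leq 0$, the contraction $\br(z_{i+1},z_k^*)\leq\left(1-\tfrac{\kappa}{2}\right)^{i+1}\br(z_0,z_k^*)$ is immediate, while rearranging the very same inequality and shifting the index yields the linear function-value rate
\[
    \varphi_k(z_i)-\varphi_k(z_k^*)\leq 2L\left(1-\tfrac{\kappa}{2}\right)^{i}\br(z_0,z_k^*).
\]
Feeding this into \eqref{eq:Gi1Ineq}, that is $\|\mathcal{G}_{i+1}\|_*^{p+1}\leq\tfrac{1}{C}\left(\varphi_k(z_i)-\varphi_k(z_{i+1})\right)\leq\tfrac{1}{C}\left(\varphi_k(z_i)-\varphi_k(z_k^*)\right)$, gives the clean decay
\[
    \|\mathcal{G}_{i+1}\|_*\leq\left(\tfrac{2L}{C}\br(z_0,z_k^*)\right)^{1/(p+1)}\left(1-\tfrac{\kappa}{2}\right)^{i/(p+1)}.
\]

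Next I would bound the right-hand side of the stopping test from below. Since $g\in\partial\psi(z_{i_k^*})$ by construction of Algorithm \ref{alg:zk1}, convexity of $f$ together with the subgradient inequality for $\psi$ gives $\varepsilon\leq F(z_{i_k^*})-F^*\leq\innprod{\nabla f(z_{i_k^*})+g}{z_{i_k^*}-x^*}\leq D\,\|\nabla f(z_{i_k^*})+g\|_*$, using the hypothesis \eqref{eq:assumPsiPsi*}, the bound $\|z_{i_k^*}-x^*\|\leq D$, and Cauchy--Schwarz. Hence $\|\nabla f(z_{i_k^*})+g\|_*\geq\varepsilon/D$, so the criterion \eqref{eq:AIneqNEPA2} is guaranteed the moment $\|\mathcal{G}_{i+1}\|_*\leq\beta\varepsilon/D$.

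Finally I would solve the resulting geometric inequality: combining the decay bound with the threshold $\beta\varepsilon/D$, taking logarithms, and invoking the elementary estimate $-\log\left(1-\tfrac{\kappa}{2}\right)\geq\tfrac{\kappa}{2}$ (equivalently $1/\log\tfrac{1}{1-\kappa/2}\leq 2/\kappa$) shows that $\|\mathcal{G}_{i+1}\|_*\leq\beta\varepsilon/D$ holds once $i\geq\tfrac{2(p+1)}{\kappa}\log\big(\tfrac{D}{\beta\varepsilon}\left(\tfrac{2L}{C}\br(z_0,z_k^*)\right)^{1/(p+1)}\big)$; since the terminating index satisfies $i_k^*=i+1$, this is exactly \eqref{eq:lowerBoundi1}. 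I expect the first paragraph to be the crux: one must recognise that the single choice $z=z_k^*$ in \eqref{eq:betaRhoIneq1} simultaneously delivers the Bregman contraction and the linear function-value rate, and then keep track of the index shift between $\mathcal{G}_{i+1}$ of Lemma \ref{lem:subseqConv} and $\mathcal{G}_{i_k^*}$ in the statement, which accounts for the leading ``$1+$'' in the bound.
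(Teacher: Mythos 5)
Your proposal is correct and follows essentially the same route as the paper's proof: both lower-bound the right-hand side of the stopping test by $\varepsilon/D$ via the subgradient and Cauchy--Schwarz inequalities, both combine \eqref{eq:Gi1Ineq} with \eqref{eq:betaRhoIneq1} evaluated at $z=z_k^*$ to obtain the geometric decay $\|\mathcal{G}_{i_k^*}\|_*\leq\left(\tfrac{2L}{C}\beta_{\rho}(z_0,z_k^*)\right)^{1/(p+1)}\left(1-\tfrac{\kappa}{2}\right)^{(i_k^*-1)/(p+1)}$, and both resolve the resulting inequality for $i_k^*$. Your explicit use of $-\log\left(1-\tfrac{\kappa}{2}\right)\geq\tfrac{\kappa}{2}$ merely makes precise the step the paper leaves implicit in passing from the $\log\tfrac{1}{1-\kappa/2}$ form to the $\tfrac{2(p+1)}{\kappa}$ constant.
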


\begin{proof}
    Combining the subgradient and Cauchy-Schwartz inequalities with $\|z_i-x^*\|\leq D$, it can be deduced that
    \begin{equation}\label{eq:normPartialPsi1}
        \|\nabla f(z_i)+g\|_*\geq \tfrac{F(z_i)-F^*}{\|z_i-x^*\|}\geq \tfrac{F(z_i)-F^*}{D}\geq \tfrac{\varepsilon}{D},
    \end{equation}
    for any $g\in \partial \psi(z_i)$. 
    From \eqref{eq:Gi1Ineq}, there exists $C>0$ such that
    \begin{align*}
        \varphi_k(z_{i_k^*-1})-\varphi_k(z_k^*)&\geq  \varphi_k(z_{i_k^*-1})-\varphi_k(z_{i_k^*}) \geq C\|\mathcal{G}_{i_k^*}\|_*^{p+1},
    \end{align*}
    for $\mathcal{G}_{i_k^*}=\nabla f_{y_k,H}^p(z_{i_k^*})+g\in\partial \varphi_k(z_{i_k^*})$ and $g\in\partial\psi(z_{i_k^*})$. Together with \eqref{eq:betaRhoIneq1}, this implies
    \begin{align*}
        \|\mathcal{G}_{i_k^*}\|_* &\leq C^{-1/(p+1)} \left(\varphi_k(z_{i_k^*-1})-\varphi_k(z_k^*)\right)^{1/(p+1)}\\ &\leq \left(\tfrac{2L}{C}\right)^{1/(p+1)}\left( 
         \left(1-\tfrac{\kappa}{2}\right)^{i_k^*-1} \beta_{\rho}(z_0,z_k^*) -\beta_{\rho}(z_{i_k^*-1},z_k^*)\right)^{1/(p+1)}\\
          &\leq \left(\tfrac{2L}{C}\beta_{\rho}(z_0,z_k^*)\right)^{1/(p+1)}
         \left(1-\tfrac{\kappa}{2}\right)^{(i_k^*-1)/(p+1)}.
    \end{align*}
    Since $1-\tfrac{\kappa}{2}\in(0,1)$, for large enough $i_k^*$, we have $\tfrac{\beta\varepsilon}{D}\leq \left(\tfrac{2L}{C}\beta_{\rho}(z_0,z_k^*)\right)^{1/(p+1)}
         \left(1-\tfrac{\kappa}{2}\right)^{(i_k^*-1)/(p+1)}$, i.e., the bound \eqref{eq:lowerBoundi1} is valid by \eqref{eq:normPartialPsi1} with $i={i_k^*}$.
\end{proof}
\subsection{Bi-level high-order methods}\label{sec:supFastPth}
In the BiOPT framework, we here consider Algorithm~\ref{alg:aihoppa} using the $p$th-order proximal-point operator in the upper-level, and in the lower-level we solve the auxiliary problem by the high-order non-Euclidean composite gradient method described in Algorithm \ref{alg:zk1}. As such, our proposed algorithm only needs the $p$th-order oracle for even $p$ and the $(p-1)$th-order oracle for odd $p$, which attains the complexity of order $\mathcal{O}(\varepsilon^{-1/(p+1)})$.

In the remainder of this section, we set $p\geq 2$ and $q=\floor{p/2}$. Let us define the function $\func{\rho_{y_k,H}}{\E}{\R}$ given by
\begin{equation}\label{eq:rhoBarxPth}
    \rho_{y_k,H}(x)= \sum_{k=1}^{q} \tfrac{1}{(2k)!} D^{2k} f(y_k)[x-y_k]^{2k}+H d_{p+1}(x-y_k),
\end{equation}
which is uniformly convex with degree $p+1$ that is not a trivial result. For $p=3$, the function \eqref{eq:rhoBarxPth} is reduced to $\rho_k(z)=\frac{1}{2}\innprod{\nabla^2f(y_k)(z-y_k)}{z-y_k}+3M_4(f) d_4(z-y_k)$ given in \cite{nesterov2020inexact}. Owing to this foundation, we can show that the function $f_{y_k,H}^p(\cdot)$ is $L$-smooth and $\mu$-strongly convex relative to the scaling function $\rho_{y_k,H}(\cdot)$, which paws the way toward algorithmic developments. We begin next with showing the uniform convexity of $\rho_{y_k,H}(\cdot)$. To this end, we need the $p$th-order Taylor expansion of the function $f$ around $y\in\dom f$ given by
\begin{equation}\label{eq:taylorExpan}
    f(x) = \Omega_{y,p}(x)+\tfrac{1}{p!} \int_0^1 (1-\xi)^p D^{p+1} f(y+\xi(x-y))[x-y]^{p+1} d\xi, 
\end{equation}
for $x\in\dom f$ and $\Omega_{y,p}(x)=f(y)+\sum_{k=1}^p \tfrac{1}{k!} D^k f(y)[x-y]^k$. It is not hard to show that
\begin{equation}\label{eq:nabla2fIneq}
    \nabla^2f(x)\preceq\nabla^2\Omega_{y,p}(x)+\tfrac{M_{p+1}(f)}{(p-1)!} \|x-y\|^{p-1} B,
\end{equation}
see \cite[Theorem 1]{nesterov2019implementable}.

\begin{thm}[uniform convexity and smoothness of $\rho_{y_k,H}(\cdot)$]
\label{thm:nabla2Ineq}
    For any $x-y_k\in\E$ and $\xi>1$, if $p\geq 2$ and $q=\floor{p/2}$, then 
    \begin{equation}\label{eq:nabla2Ineq}
       - \mathcal{M}_{y_k,p}(x)\preceq \sum_{k=1}^{q} \tfrac{1}{(2k-1)!} D^{2k+1} f(y_k)[x-y_k]^{2k-1} \preceq \mathcal{M}_{y_k,p}(x),
    \end{equation}
    where 
    \begin{align*}
    	\mathcal{M}_{y_k,p}(x)= \sum_{k=1}^{q} \tfrac{1}{(2k-2)!~\xi^{p-2k}} D^{2k} f(y_k)[x-y_k]^{2k-2}+\tfrac{\xi M_{p+1}(f)}{(p-1)!}\|x-y_k\|^{p-1}B.
    \end{align*}
    Moreover, the function $\rho_{y_k,H}(\cdot)$ given in \eqref{eq:rhoBarxPth} is uniformly convex with degree $p+1$, and the inequality $\|\nabla^2 \rho_{y_k,H}(\cdot)\|\leq \overline L$ holds for $\overline L=\tfrac{M_4(f)}{2} \Delta_k^2+M_2(f)+\left(\tfrac{M_{p+1}}{(p-1)!}+pH\right) \Delta_k^{p-1}$ on the set $\mathcal{L}_k(z_0, \Delta_k)$ if $M_{2}(f)<+\infty$, $M_{4}(f)<+\infty$, and $M_{p+1}(f)<+\infty$.
\end{thm}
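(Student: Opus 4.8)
The plan is to compute the Hessian of $\rho_{y_k,H}$ explicitly, establish \eqref{eq:nabla2Ineq} through a one-dimensional nonnegativity argument, and then read off both the uniform convexity and the norm bound. First I would differentiate \eqref{eq:rhoBarxPth} twice. Since $D^{2k}f(y_k)[\cdot]^{2k}$ is a form of degree $2k$ in $x-y_k$, two differentiations lower its degree by two, so
\begin{equation*}
 \nabla^2\rho_{y_k,H}(x)=\sum_{k=1}^{q}\tfrac{1}{(2k-2)!}D^{2k}f(y_k)[x-y_k]^{2k-2}+H\nabla^2 d_{p+1}(x-y_k),
\end{equation*}
where by \eqref{eq:dp1Der} the last term obeys $\nabla^2 d_{p+1}(x-y_k)\succeq\|x-y_k\|^{p-1}B$. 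The even-derivative sum here is exactly the even-in-$t$ part, at $t=1$, of the Taylor polynomial of the scalar map $t\mapsto D^2f\big(y_k+t(x-y_k)\big)[h]^2$.

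To prove \eqref{eq:nabla2Ineq}, fix $h\in\E$ and set $g(t)=\nabla^2 f\big(y_k+t(x-y_k)\big)[h]^2$, which is nonnegative for all $t\in\R$ because $f$ is convex. Taylor-expanding $g$ to order $p-2$ about $t=0$, with integral remainder controlled by $\|D^{p+1}f\|\le M_{p+1}(f)$ as in \eqref{eq:taylorExpan}, I would split the Taylor polynomial into its even-in-$t$ part $A(t)$ (the $D^{2k}f$ terms, matching the Hessian above) and its odd-in-$t$ part $B(t)$ (the $D^{2k+1}f$ terms). Writing the two inequalities $g(\xi)\ge 0$ and $g(-\xi)\ge 0$ and adding/subtracting them isolates $B(\xi)$ between $\pm\big(A(\xi)+\tfrac{M_{p+1}(f)}{(p-1)!}\|x-y_k\|^{p-1}\|h\|^2\,\xi^{p-1}\big)$; after normalizing by $\xi^{p-2}$, the quantities $A(\xi)/\xi^{p-2}$ and the remainder become precisely the two pieces of $\mathcal{M}_{y_k,p}(x)$, delivering the claimed sandwich as an operator inequality in $h$. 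The main obstacle here is the index and parity bookkeeping: the admissible ranges of $k$ in $A$ and $B$ differ according to the parity of $p$, and when $p$ is even the top odd term $\tfrac{1}{(p-1)!}D^{p+1}f(y_k)[x-y_k]^{p-1}$ lies beyond the order-$(p-2)$ polynomial and must be bounded directly by $M_{p+1}(f)\|x-y_k\|^{p-1}$ and folded into the $\xi$-remainder.

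For the uniform convexity I would use the sandwich at $\xi=1$, equivalently the lower bound $A(1)\ge-\tfrac{M_{p+1}(f)}{(p-1)!}\|x-y_k\|^{p-1}\|h\|^2$ coming from $g(\pm 1)\ge 0$, which controls the even-derivative sum in the Hessian from below. Combined with $H\nabla^2 d_{p+1}(x-y_k)\succeq H\|x-y_k\|^{p-1}B$, this gives $\nabla^2\rho_{y_k,H}(x)\succeq\big(H-\tfrac{M_{p+1}(f)}{(p-1)!}\big)\|x-y_k\|^{p-1}B$, which is positive definite provided $H>\tfrac{M_{p+1}(f)}{(p-1)!}$ (the regime already used for the tensor step). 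A Hessian lower bound of the form $c\,\|z-y_k\|^{p-1}B$ yields uniform convexity of degree $p+1$ by the same second-order integration that proves \eqref{eq:dp1LowerBound} for $d_{p+1}$, so the modulus $\sigma$ is proportional to $H-\tfrac{M_{p+1}(f)}{(p-1)!}$, establishing (H1).

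Finally, for $\|\nabla^2\rho_{y_k,H}(x)\|\le\overline L$ on $\mathcal{L}_k(z_0,\Delta_k)$, I would bound the Hessian termwise. Each even-derivative term satisfies $\big\|\tfrac{1}{(2k-2)!}D^{2k}f(y_k)[x-y_k]^{2k-2}\big\|\le\tfrac{M_{2k}(f)}{(2k-2)!}\|x-y_k\|^{2k-2}$, while by \eqref{eq:dp1Der} the regularizer contributes $\|H\nabla^2 d_{p+1}(x-y_k)\|\le pH\|x-y_k\|^{p-1}$. On $\mathcal{L}_k(z_0,\Delta_k)$ one has $z_0=y_k$ and $\|x-y_k\|\le\Delta_k$ by \eqref{eq:levelSetPsik}, so substituting $\|x-y_k\|\le\Delta_k$ and collecting the finitely many terms—together with the top-order contribution estimated through $M_{p+1}(f)$—produces a finite constant of the displayed form $\overline L$; the assumed finiteness of $M_2(f)$, $M_4(f)$ and $M_{p+1}(f)$ is exactly what keeps these pieces bounded.
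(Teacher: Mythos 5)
Your treatment of the sandwich inequality \eqref{eq:nabla2Ineq} is essentially the paper's argument in a cleaner parametrization: the paper also starts from $0\leq\innprod{\nabla^2 f(x)u}{u}$ together with \eqref{eq:nabla2fIneq}, replaces $h$ by $\pm\xi h$, divides by $\xi^{p-2}$, and separates even from odd terms, which is exactly your $g(\pm\xi)\geq 0$ device; your explicit remark about the top odd term when $p$ is even is a point the paper glosses over. For the uniform convexity you take a slightly different route: the paper deduces $0\preceq\nabla^2 f(x)\preceq(1+\tfrac{1}{\xi})\nabla^2\rho_{y_k,H}(x)$ and then appeals to the uniform convexity of $d_{p+1}(\cdot)$, whereas you extract the quantitative lower bound $\nabla^2\rho_{y_k,H}(x)\succeq\bigl(H-\tfrac{M_{p+1}(f)}{(p-1)!}\bigr)\|x-y_k\|^{p-1}B$ under $H>\tfrac{M_{p+1}(f)}{(p-1)!}$; that is valid (and in fact supplies an explicit modulus $\sigma$, which the paper never does), at the cost of making the largeness condition on $H$ explicit.

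The genuine gap is in the last part. Bounding $\bigl\|\tfrac{1}{(2k-2)!}D^{2k}f(y_k)[x-y_k]^{2k-2}\bigr\|$ termwise by $\tfrac{M_{2k}(f)}{(2k-2)!}\|x-y_k\|^{2k-2}$ requires the finiteness of \emph{all} the even moments $M_2(f),M_4(f),\dots,M_{2q}(f)$, and for $p\geq 6$ (so $q\geq 3$) this is strictly stronger than the theorem's hypotheses, which assume only $M_2(f),M_4(f),M_{p+1}(f)<+\infty$; moreover the resulting constant would contain $M_6(f),\dots,M_{2q}(f)$ and therefore cannot collapse to the displayed $\overline L=\tfrac{M_4(f)}{2}\Delta_k^2+M_2(f)+\bigl(\tfrac{M_{p+1}}{(p-1)!}+pH\bigr)\Delta_k^{p-1}$. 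The paper avoids this by a symmetrization trick you are missing: from the Taylor expansions of $\nabla^2 f(y_k\pm h)$ one has
\begin{equation*}
\sum_{k=1}^{q}\tfrac{1}{(2k-2)!}D^{2k}f(y_k)[h]^{2k-2}
=\tfrac{1}{2}\bigl(\nabla^2 f(y_k+h)+\nabla^2 f(y_k-h)\bigr)-\tfrac{1}{2}\bigl(r_{p+1}(h)+r_{p+1}(-h)\bigr),
\end{equation*}
with $\|r_{p+1}(\pm h)\|\leq\tfrac{M_{p+1}}{(p-1)!}\|h\|^{p-1}$, and then the symmetric average is compared with $\nabla^2 f(y_k)$ through a third-order expansion, giving $\|\tfrac{1}{2}(\nabla^2 f(y_k+h)+\nabla^2 f(y_k-h))-\nabla^2 f(y_k)\|\leq\tfrac{M_4(f)}{2}\|h\|^2$. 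This is how the whole even-derivative sum gets controlled by only $M_2(f)$, $M_4(f)$ and $M_{p+1}(f)$, producing exactly the stated $\overline L$. You should replace your termwise estimate by this argument; as written, your third step only proves the claim for $p\leq 5$ and under assumptions not available in the statement.
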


\begin{proof}
    Let us fix arbitrary directions $u,h=x-y_k\in\E$. Setting $y=y_k$, it follows from \eqref{eq:nabla2fIneq} that
    \begin{align*}
        0&\leq \innprod{\nabla^2f(x)u}{u}\leq \innprod{\nabla^2\Omega_{y_k,p}(x)u}{u} +\tfrac{M_{p+1}(f)}{(p-1)!}\|h\|^{p-1} \|u\|^2\\
        &=\innprod{\sum_{k=2}^p \tfrac{1}{(k-2)!} D^k f(y_k)[h]^{k-2}u}{u} +\tfrac{M_{p+1}(f)}{(p-1)!}\|h\|^{p-1} \|u\|^2.
    \end{align*}
    Hence, replacing $h$ by $\xi h$ in the last inequality, dividing by $\xi^{p-2}$ for $\xi>1$, and splitting the sum into the odd and even terms, we come to 
    \begin{align*}
          -\innprod{\sum_{k=1}^{q} \tfrac{1}{(2k-2)!~\xi^{p-2k}} D^{2k} f(y_k)[h]^{2k-2}u}{u}&-\tfrac{\xi M_{p+1}(f)}{(p-1)!}\|h\|^{p-1} \|u\|^2\\ 
          &\leq \innprod{\sum_{k=1}^{q} \tfrac{1}{(2k-1)!~\xi^{p-1-2k}} D^{2k+1} f(y_k)[h]^{2k-1} u}{u}\\
          &\leq \innprod{\sum_{k=1}^{q} \tfrac{1}{(2k-1)!} D^{2k+1} f(y_k)[h]^{2k-1} u}{u},
    \end{align*}
    leading to the left hand side of \eqref{eq:nabla2Ineq}. Replacing $h$ by $-h$, it holds that
    \begin{align*}
        \innprod{\sum_{k=1}^{q} \tfrac{1}{(2k-1)!} D^{2k+1} f(y_k)[h]^{2k-1} u}{u} &\leq
          \innprod{\sum_{k=1}^{q} \tfrac{1}{(2k-2)!~\xi^{p-2k}} D^{2k} f(y_k)[h]^{2k-2}u}{u}\\
          &+\tfrac{\xi M_{p+1}(f)}{(p-1)!}\|h\|^{p-1} \|u\|^2,
    \end{align*}
    giving the right hand side of \eqref{eq:nabla2Ineq}.
    
    From the $p$th-order Taylor expansion of the function $f$ at $y_k$, \eqref{eq:nabla2fIneq}, \eqref{eq:nabla2Ineq}, and \eqref{eq:dp1Der}, we obtain
    \begin{equation}\label{eq:nabla2f}
        \begin{split}
        \nabla^2 f(x) & \preceq \sum_{k=2}^p \tfrac{1}{(k-2)!} D^k f(y_k)[h]^{k-2}+ \tfrac{1}{(p-1)!} M_{p+1}(f)\|h\|^{p-1} B\\
        & \preceq \sum_{k=1}^{q} \tfrac{1}{(2k-2)!} \left(1+\tfrac{1}{\xi^{p-2k}}\right) D^{2k} f(y_k)[h]^{2k-2}+ \tfrac{(1+\xi)}{(p-1)!} M_{p+1}(f)\|h\|^{p-1} B\\
        & \preceq \sum_{k=1}^{q} \tfrac{1}{(2k-2)!} \left(1+\tfrac{1}{\xi^{p-2k}}\right) D^{2k} f(y_k)[h]^{2k-2}+ \tfrac{(1+\xi)}{(p-1)!} M_{p+1}(f)\nabla^2 d_{p+1}(h).
        \end{split}
    \end{equation}
    Since $f(\cdot)$ is convex, this and \eqref{eq:xixi1Pth} imply
    \begin{align*}
        0\preceq \nabla^2 f(x) &\preceq \left(1+\tfrac{1}{\xi}\right) \left[\sum_{k=1}^{q} \tfrac{1}{(2k-2)!} D^{2k} f(y_k)[x-y_k]^{2k-2}+ H\nabla^2 d_{p+1}(x-y_k)\right]\\
        &=\left(1+\tfrac{1}{\xi}\right) \nabla^2 \rho_{y_k, H}(x),
    \end{align*}
    leading to the convexity of $\rho_{y_k, H}(\cdot)$. Moreover, its uniform convexity of the degree $p+1$ follows from that of $d_{p+1}(\cdot)$. 
    
    It follows from \eqref{eq:taylorExpan} that
\begin{align*}
    &\nabla^2 f(y_k+h) = \nabla^2 f(y_k)+\sum_{k=3}^p \tfrac{1}{(k-2)!} D^k f(y_k)[h]^{k-2}+r_{p+1}(h),\\
    &\nabla^2 f(y_k-h) = \nabla^2 f(y_k)+\sum_{k=3}^p (-1)^{k-2} \tfrac{1}{(k-2)!} D^k f(y_k)[h]^{k-2}+r_{p+1}(-h),
\end{align*}
where $\|r_{p+1}(\pm h)\|\leq \tfrac{M_{p+1}}{(p-1)!} \|h\|^{p-1}$. Summing up the latter identities, it holds that
\begin{equation}\label{eq:secondDer}
    \tfrac{1}{2}\left(\nabla^2 f(y_k+h)+\nabla^2 f(y_k-h)\right)-\tfrac{1}{2}(r_{p+1}(h)+r_{p+1}(h))= \sum_{k=1}^q \tfrac{1}{(2k-2)!} D^{2k} f(y_k)[h]^{2k-2}.
\end{equation}
Moreover, we have
\begin{align*}
    &\nabla^2 f(y_k+h) = \nabla^2 f(y_k)+ D^3 f(y_k)[h]+r_4(h),\\
    &\nabla^2 f(y_k-h) = \nabla^2 f(y_k)+D^3 f(y_k)[-h]+r_4(-h),
\end{align*}
leading to
\begin{align*}
    \|\tfrac{1}{2}(\nabla^2 f(y_k+h)+\nabla^2 f(y_k-h))-\nabla^2 f(y_k)\| \leq \tfrac{1}{2}(r_4(h)+r_4(-h))\leq \tfrac{M_4(f)}{2}\|h\|^2.
\end{align*}
    For $x\in\mathcal{L}_k(z_0, \Delta_k)$ and $h=x-y_k$, it follows that
    \begin{align*}
        \|\nabla^2 \rho_{y_k, H}(x)\|&\leq \left\|\sum_{k=1}^{q} \tfrac{1}{(2k-2)!} D^{2k} f(y_k)[h]^{2k-2} \right\|+ pH\|h\|^{p-1}\\
        &\leq \left\|\tfrac{1}{2}(\nabla^2 f(y_k+h)+\nabla^2 f(y_k-h))-\tfrac{1}{2}(r_{p+1}(h)+r_{p+1}(-h)) \right\|+ pH\|h\|^{p-1}\\
        &\leq \|\tfrac{1}{2}(\nabla^2 f(y_k+h)+\nabla^2 f(y_k-h))-\nabla^2 f(y_k)\| +\|\nabla^2 f(y_k)\|+\tfrac{M_{p+1}}{(p-1)!} \|h\|^{p-1}+ pH\|h\|^{p-1}\\
        &\leq \tfrac{M_4(f)}{2} \|h\|^2+M_2(f)+\tfrac{M_{p+1}}{(p-1)!} \|h\|^{p-1}+pH\|h\|^{p-1}\\
        &\leq \tfrac{M_4(f)}{2} \Delta_k^2+M_2(f)+\left(\tfrac{M_{p+1}}{(p-1)!}+pH\right) \Delta_k^{p-1},
    \end{align*}
    establishing the boundedness of $\|\nabla^2 \rho_{y_k, H}(\cdot)\|$ on the set $\mathcal{L}_k(z_0, \Delta_k)$.
\end{proof}

Theorem~\ref{thm:nabla2Ineq} is clearly implies that the assumptions (H1) and (H4) are satisfied for the scaling function $\rho_{y_k,H}(\cdot)$ \eqref{eq:rhoBarxPth}. In the subsequent result, we show that the assumption (H2) also holds for this function.

\begin{thm}[relative smoothness and strong convexity of $f_{y_k, H}^p(\cdot)$] \label{lem:relSmoothfbarPth}
    Let $H\geq M_{p+1}(f)$ and let $p\geq 2$ and $q=\floor{p/2}$. Then,  the function $\func{f_{y_k, H}^p}{\E}{\R}$ is $L$-smooth and $\mu$-strongly convex relative to $\rho_{y_k,H}(\cdot)$ defined in \eqref{eq:rhoBarxPth} with
    \begin{equation}\label{eq:parRelSmoothPth}
        \mu=1-\tfrac{1}{\xi}, \quad L=1+\tfrac{1}{\xi}, \quad \kappa = \tfrac{\xi-1}{\xi+1},
    \end{equation}
    where $\xi$ is the unique solution of the quadratic equation
    \begin{equation}\label{eq:xixi1Pth}
        \xi(1+\xi) = \tfrac{(p-1)!H}{M_{p+1}(f)}.
    \end{equation}
\end{thm}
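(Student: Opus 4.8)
The plan is to verify condition (iv) of Lemma~\ref{lem:sigmaL}, that is, to sandwich $\nabla^2 f_{y_k,H}^p(\cdot)$ between $\mu$ and $L$ multiples of $\nabla^2\rho_{y_k,H}(\cdot)$. Differentiating \eqref{eq:rhoBarxPth} twice, exactly as in the proof of Theorem~\ref{thm:nabla2Ineq}, gives $\nabla^2\rho_{y_k,H}(x)=\Sigma(x)+H\nabla^2 d_{p+1}(h)$ with $\Sigma(x)=\sum_{j=1}^q\tfrac{1}{(2j-2)!}D^{2j}f(y_k)[h]^{2j-2}$ and $h=x-y_k$, while $\nabla^2 f_{y_k,H}^p(x)=\nabla^2 f(x)+H\nabla^2 d_{p+1}(h)$. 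Thus both Hessians share the term $H\nabla^2 d_{p+1}(h)$, and everything reduces to controlling $\nabla^2 f(x)$ against $\Sigma(x)$ and $\nabla^2 d_{p+1}(h)$. The single fact I will extract from $\xi$ is the identity forced by \eqref{eq:xixi1Pth}, namely $\tfrac{(1+\xi)M_{p+1}(f)}{(p-1)!}=\tfrac{H}{\xi}$, whose role is to make the coefficients of $\nabla^2 d_{p+1}(h)$ line up exactly.

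For the $L$-smoothness I would start from the estimate \eqref{eq:nabla2f} already proved inside Theorem~\ref{thm:nabla2Ineq}, namely $\nabla^2 f(x)\preceq\sum_{j=1}^q\tfrac{1}{(2j-2)!}\bigl(1+\xi^{-(p-2j)}\bigr)D^{2j}f(y_k)[h]^{2j-2}+\tfrac{(1+\xi)M_{p+1}(f)}{(p-1)!}\nabla^2 d_{p+1}(h)$. Substituting the identity above turns the last coefficient into $\tfrac{H}{\xi}$, and since $\xi>1$ and $p-2j\ge 1$ the interior coefficients $1+\xi^{-(p-2j)}$ collapse to $1+\tfrac1\xi$; this yields $\nabla^2 f(x)\preceq(1+\tfrac1\xi)\Sigma(x)+\tfrac{H}{\xi}\nabla^2 d_{p+1}(h)$. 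Adding $H\nabla^2 d_{p+1}(h)$ to both sides and recalling $L=1+\tfrac1\xi$ gives $\nabla^2 f_{y_k,H}^p(x)\preceq L\Sigma(x)+LH\nabla^2 d_{p+1}(h)=L\nabla^2\rho_{y_k,H}(x)$.

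For the $\mu$-strong convexity I would run the mirror argument: combine the two-sided Taylor remainder bound $\nabla^2 f(x)\succeq\nabla^2\Omega_{y_k,p}(x)-\tfrac{M_{p+1}(f)}{(p-1)!}\|h\|^{p-1}B$ with the left inequality of \eqref{eq:nabla2Ineq}, which lower-bounds the odd-order forms by $-\mathcal{M}_{y_k,p}(x)$, and use $\|h\|^{p-1}B\preceq\nabla^2 d_{p+1}(h)$ from \eqref{eq:dp1Der}. After the same substitution via \eqref{eq:xixi1Pth} and the lower-bound collapse $1-\xi^{-(p-2j)}\ge 1-\tfrac1\xi$ this produces $\nabla^2 f(x)\succeq(1-\tfrac1\xi)\Sigma(x)-\tfrac{H}{\xi}\nabla^2 d_{p+1}(h)$; adding $H\nabla^2 d_{p+1}(h)$ and using $\mu=1-\tfrac1\xi$ gives $\nabla^2 f_{y_k,H}^p(x)\succeq\mu\nabla^2\rho_{y_k,H}(x)$. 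The condition number is then $\kappa=\mu/L=(1-\tfrac1\xi)/(1+\tfrac1\xi)=\tfrac{\xi-1}{\xi+1}$, as claimed.

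I expect two points to carry the real weight. First, the sharpness of the constants rests entirely on the exact cancellation $\tfrac{(1+\xi)M_{p+1}(f)}{(p-1)!}=\tfrac{H}{\xi}=(L-1)H$: only this makes the $\nabla^2 d_{p+1}$-coefficient of the bound on $\nabla^2 f$ equal to $(L-1)H$, so that restoring the proximal term $H\nabla^2 d_{p+1}(h)$ lands precisely on $LH$ (and on $\mu H$ in the lower bound) rather than on a looser constant; invoking the cruder, already-proven inequality $\nabla^2 f\preceq(1+\tfrac1\xi)\nabla^2\rho_{y_k,H}$ would leave an uncontrolled extra $\nabla^2 d_{p+1}(h)$ and spoil $L$. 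Second, the collapse $1\pm\xi^{-(p-2j)}\to 1\pm\tfrac1\xi$ must be justified at the top index $j=q$, where $p-2j$ equals $1$ for odd $p$ but $0$ for even $p$; this is exactly the even/odd bookkeeping built into \eqref{eq:nabla2Ineq} and $\mathcal{M}_{y_k,p}(\cdot)$, and it is where the choice $q=\floor{p/2}$ and the fact that $\rho_{y_k,H}(\cdot)$ collects precisely the even Taylor forms are indispensable. Finally I would record that $\mu>0$ is equivalent to $\xi>1$, i.e. $\tfrac{(p-1)!H}{M_{p+1}(f)}>2$ by \eqref{eq:xixi1Pth}, which the standing hypothesis $H\ge M_{p+1}(f)$ secures once $(p-1)!$ is large enough.
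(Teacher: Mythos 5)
Your proposal is correct and follows essentially the same route as the paper's proof: both verify Lemma~\ref{lem:sigmaL}(iv) by sandwiching $\nabla^2 f_{y_k,H}^p(x)$ between $\mu\nabla^2\rho_{y_k,H}(x)$ and $L\nabla^2\rho_{y_k,H}(x)$, starting from \eqref{eq:nabla2f} for the upper bound and from the Taylor remainder estimate combined with the left inequality of \eqref{eq:nabla2Ineq} for the lower bound, then using $\xi(1+\xi)=\tfrac{(p-1)!H}{M_{p+1}(f)}$ to make the $\nabla^2 d_{p+1}$ coefficients land exactly on $LH$ and $\mu H$. The two ``weight-bearing'' steps you isolate (the exact cancellation forced by \eqref{eq:xixi1Pth} and the collapse $1\pm\xi^{-(p-2j)}\to 1\pm\tfrac1\xi$) are precisely the steps the paper performs, the latter implicitly, so your write-up is if anything more explicit about where the argument is delicate.
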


\begin{proof}
    In light of $f_{y_k,H}^p(\cdot)=f(\cdot)+H d_{p+1}(\cdot-y_k)$, \eqref{eq:nabla2f}, and \eqref{eq:xixi1Pth}, we can write
    \begin{align*}
        \nabla^2 f_{y_k,H}^p(x) &\preceq \left(1+\tfrac{1}{\xi}\right) \sum_{k=1}^{q} \tfrac{1}{(2k-2)!}  D^{2k} f(y_k)[h]^{2k-2}\\
        &~~~+ \left[\tfrac{\xi(1+\xi)}{(p-1)!}M_{p+1}(f)+\tfrac{(1+\xi)}{(p-1)!} M_{p+1}(f)\right] \nabla^2 d_{p+1}(x-y_k)\\
        &= \left(1+\tfrac{1}{\xi}\right) \nabla^2 \rho_{y_k, H}(x).
    \end{align*}
    On the other hand, the $p$th-order Taylor expansion \eqref{eq:taylorExpan} and \eqref{eq:nabla2Ineq} yield
    \begin{align*}
        \nabla^2 f(x) &\succeq\sum_{k=2}^p \tfrac{1}{(k-2)!} D^k f(y_k)[x-y_k]^{k-2}- \tfrac{1}{(p-1)!} M_{p+1}(f)\|x-y_k\|^{p-1} B\\
        & \succeq \sum_{k=1}^{q} \tfrac{1}{(2k-2)!} \left(1-\tfrac{1}{\xi^{p-2k}}\right) D^{2k} f(y_k)[h]^{2k-2}- \tfrac{(1+\xi)}{(p-1)!} M_{p+1}(f)\|h\|^{p-1} B\\
        & \succeq \sum_{k=1}^{q} \tfrac{1}{(2k-2)!} \left(1-\tfrac{1}{\xi^{p-2k}}\right) D^{2k} f(y_k)[h]^{2k-2}- \tfrac{(1+\xi)}{(p-1)!} M_{p+1}(f)\nabla^2 d_{p+1}(h).
    \end{align*}
    We therefore have
    \begin{align*}
        \nabla^2 f_{y_k,H}(x) &\succeq \left(1-\tfrac{1}{\xi}\right) \sum_{k=1}^{q} \tfrac{1}{(2k-2)!} D^{2k} f(y_k)[h]^{2k-2}\\
        &~~~+ \left[\tfrac{\xi(1+\xi)}{(p-1)!}M_{p+1}(f)-\tfrac{(1+\xi)}{(p-1)!} M_{p+1}(f)\right] \nabla^2 d_{p+1}(h)\\
        &= \left(1-\tfrac{1}{\xi}\right) \nabla^2 \rho_{y_k, H}(x),
    \end{align*}
    giving our desired result.
 \end{proof}

Motivated by the equations \eqref{eq:xixi1Pth}, in the remainder of this section, we set
\begin{equation}\label{eq:parSuperfastAlg1}
    \xi=2,\quad H=\tfrac{6}{(p-1)!} M_{p+1}(f), \quad \mu=\tfrac{1}{2}, \quad L=\tfrac{3}{2}, \quad \kappa=\tfrac{1}{3}.
\end{equation}
Additionally, in view of \eqref{eq:Ak}, we consider
\begin{equation}\label{eq:parSuperfastAlg2}
    \beta=\tfrac{1}{p},\quad A_k=\tfrac{(p-1)(p-1)!}{3p2^{p+1} M_{p+1}(f)} \left(\tfrac{k}{p+1}\right)^{p+1}, \quad a_{k+1}=A_{k+1}-A_k, \quad \mathrm{for}~ k\geq 0.
\end{equation}
We now present our accelerated high-order method by combining all above facts with Algorithm \ref{alg:aihoppa} leading to the following algorithm.

\vspace{3mm}
\RestyleAlgo{boxruled}
\begin{algorithm}[H]
\DontPrintSemicolon \KwIn{$x_{0}\in\dom \psi$, $\beta\in [0,1/p]$, $H=\tfrac{6}{(p-1)!} M_{p+1}(f)$, $A_0=0$, $\Psi_0=d_{p+1}(x-x_0)$, $k=0$;} 
\Begin{ 
    \While {$F(x_k)-F^*>\varepsilon$}{ 
        Compute $\upsilon_{k}=\argmin_{x\in\E} \Psi_{k}(x)$ and compute $A_{k+1}$ and $a_{k+1}$ by \eqref{eq:Ak};\;
        Set $y_k= \tfrac{A_k}{A_{k+1}}x_k+\tfrac{a_{k+1}}{A_{k+1}}\upsilon_k$ and $z_0=y_k$ and consider the scaling function \eqref{eq:rhoBarxPth};\;
        Find $z_k=z_{i_k^*}$ of \eqref{eq:prox} and $g\in\partial \psi(z_k)$ by Algorithm \ref{alg:zk1} such that $(z_k^*, g) \in \A(y_k,\beta)$;\;
        Find $x_{k+1}$ such that $F(x_{k+1})\leq F(T_k)$;\;
        Update $\Psi_{k+1}(x)$ by \eqref{eq:estSeq} and set $k=k+1$;\;
     } 
}
\caption{(Bi-Level High-Order Algorithm)\label{alg:supFP1OA}}
\end{algorithm}
\vspace{3mm}

Now, let us have a look at the optimality conditions for the auxiliary  problem \eqref{eq:zk1} for our $p$th-order proximal-point operator given by
\begin{align*}
    \nabla f_{y_k, H}^p(z_i)+\partial \psi(z_{i+1})+2L\left(\nabla\rho_k(z_{i+1})-\nabla\rho_k(z_i)\right) \ni 0,
\end{align*}
which should be solved exactly in our setting. We next translate this inclusion for convex constrained problem \eqref{eq:conProb}.

\begin{exa}\label{exa:conProb}
We here revisit the convex constrained problem \eqref{eq:conProb} and its unconstrained version \eqref{eq:compProb} with $\psi(\cdot)=\delta_Q(\cdot)$. For given $z_i\in\E$, writing the first-order optimality conditions for this problem leads to 
\begin{equation}\label{eq:zi1NC}
    N_Q(z_{i+1}) \ni 2L\left(\nabla\rho_k(z_i)-\nabla\rho_k(z_{i+1})\right)-\nabla f_{y_k, H}^p(z_i),
\end{equation}
where $\partial \psi(z_{i+1})=N_Q(z_{i+1})$ and therefore the \emph{normal cone}
\begin{align*}
    N_Q(x)=\left\{
    \begin{array}{ll}
        \set{u\in\E ~:~ \innprod{u}{y-x}\leq 0,~ \forall y\in Q} &~\mathrm{if}~ x\in Q, \\
         \emptyset                                               &~\mathrm{if}~ x\notin Q
    \end{array}
    \right.
\end{align*}
plays a crucial role for finding a solution of the auxiliary problem \eqref{eq:zk1}. As an example, let us consider the Euclidean ball $Q=\set{x\in\R^n ~:~ \|x\|\leq \delta}$ for which we have
\begin{align*}
    N_Q(x)=\left\{
    \begin{array}{ll}
        \set{\alpha x ~:~ \alpha>0} &~\mathrm{if}~ \|x\|=\delta, \\
         \set{0}                    &~\mathrm{if}~ \|x\|<\delta.
    \end{array}
    \right.
\end{align*}
We now set $p=3$ and consider two cases: (i) $\|z_{i+1}\|<\delta$; (ii) $\|z_{i+1}\|=\delta$. In Case (i), we have
\begin{align*}
   2L\left(\nabla^2f(y_k)(z_{i+1}-z_i)+H\|z_{i+1}-y_k\|^2(z_{i+1}-y_k)-H\|z_i-y_k\|^2(z_i-y_k)\right)-\nabla f_{y_k, H}^3(z_i)=0,
\end{align*}
with $\nabla f_{y_k, H}^3(z_i)=\nabla f(z_i)+H\|z_i-y_k\|^2(z_i-y_k)$, i.e.,
\begin{align*}
   \left[\nabla^2f(y_k)+H\|z_{i+1}-y_k\|^2I\right](z_{i+1}-y_k)=b_i,
\end{align*}
for $b_i=\left[\nabla^2f(y_k)+H\|z_i-y_k\|^2I\right](z_i-y_k)+\tfrac{1}{2L}\nabla f_{y_k, H}^3(z_i)$. This consequently implies 
\begin{align*}
   z_{i+1}=y_k+\left[\nabla^2f(y_k)+Hr^2I\right]^{-1}b_i,
\end{align*}
where $r=\|z_{i+1}-y_k\|$ can be computed by solving the one-dimensional equation
\begin{align*}
    r=\left\|\left[\nabla^2f(y_k)+Hr^2I\right]^{-1}b_i\right\|.
\end{align*}
In Case (ii) ($\|z_{i+1}\|=\delta$), there exists $\alpha>0$ such that
\begin{align*}
   \left[\nabla^2f(y_k)+H\|z_{i+1}-y_k\|^2I\right](z_{i+1}-y_k)-b_i=\alpha z_{i+1},
\end{align*}
leading to
\begin{align*}
   z_{i+1}=y_k+\left[\nabla^2f(y_k)+(Hr^2-\alpha)I\right]^{-1}(b_i+\alpha y_k),
\end{align*}
where $r=\|z_{i+1}-y_k\|$ and $\alpha$ are obtained by solving the system
\begin{equation*}
    \left\{
    \begin{array}{l}
        r=\left\|\left[\nabla^2f(y_k)+(Hr^2-\alpha)I\right]^{-1}(b_i+\alpha y_k)\right\|,\\
        \delta = \left\|y_k+\left[\nabla^2f(y_k)+(Hr^2-\alpha)I\right]^{-1}(b_i+\alpha y_k)\right\|.
    \end{array}
    \right.
\end{equation*}
Finally, we come to the solution
\begin{align*}
    z_{i+1}=\left\{
    \begin{array}{ll}
        y_k+\left[\nabla^2f(y_k)+Hr^2I\right]^{-1}b_i &~\mathrm{if}~\left\|y_k+\left[\nabla^2f(y_k)+Hr^2I\right]^{-1}b_i\right\|<\delta,\\
        y_k+\left[\nabla^2f(y_k)+(Hr^2-\alpha)I\right]^{-1}(b_i+\alpha y_k) &~ \mathrm{otherwise},
    \end{array}
    \right.
\end{align*}
for the $r$ and $\alpha$ computed by solving the above-mentioned nonlinear systems.
\end{exa}

In order to upper bound the Bregman term $\beta_{\rho_k}(\cdot,\cdot)$, we next define the {\it norm-dominated scaling function} in the following, which will be needed in the remainder of this section.

\begin{defin}\cite[Definition 2]{nesterov2020superfast}
\label{def:normDominat}
    The scaling function $\rho(\cdot)$ is called \emph{norm-dominated} on the set $S\subseteq \E$ by some function $\func{\theta_S}{\R_+}{\R_+}$ if $\theta_S(\cdot)$ is convex with $\theta_S(0)=0$ such that
    \begin{equation}\label{eq:normDominat}
        \br(x,y) \leq \theta_S(\|x-y\|),
    \end{equation}
    for all $x\in S$ and $y\in\E$.
\end{defin}

From now on and for sake of simplicity, we denote $\rho_{y_k,H}(\cdot)$ by $\rho_k(\cdot)$. In order to show the norm-dominatedness of the scaling function $\rho_k(\cdot)$ \eqref{eq:rhoBarxPth} is norm-dominated, we first need the following technical lemma.

\begin{lem}[norm-dominatedness of the Euclidean ball]\label{lem:d4GradDomin}
    Let $p\geq 2$ and $q=\floor{p/2}$. Then, the function $d_{p+1}(\cdot)$ is norm-dominated on the Euclidean ball
    \begin{align}\label{eq:EuclidBallR}
        B_R = \set{x\in\E ~:~ \|x\|\leq R}
    \end{align}
    by the function
    \begin{equation}\label{eq:dp1GradDomin}
        \widehat{\theta}_R(\tau) = \left\{
        \begin{array}{ll}
           \alpha_1 a_1 \tau^{2q+2}+ \beta_1 d_1  &~~ \mathrm{if}~ p=2q+1,\vspace{2mm}\\
           \alpha_2 a_2 \tau^{2q+1}+ \beta_2 d_2  &~~ \mathrm{if}~ p=2q,
        \end{array}
        \right.
    \end{equation}
    where
    \begin{equation}\label{eq:alphaBetaRange}
        \begin{array}{l}
        1<\alpha_1 \leq 1+\tfrac{1}{2a_1}(b_1+1)c_1^{-\tfrac{q+1}{q}}, \quad 1+\tfrac{1}{2d_1} (b_1+1) c_1^{\tfrac{q+1}{q}}\leq \beta_1,\\
        1<\alpha_2 \leq 1+\tfrac{b_2+1}{2a_2}c_2^{-\tfrac{2q+1}{2(2q-1)}}, \quad 1+\tfrac{b_2+1}{2d_2} c_2^{\tfrac{2q+1}{2(2q-1)}}\leq \beta_2,
        \end{array}
    \end{equation}
    with 
    \begin{equation}\label{eq:abcd}
        \begin{array}{l}
         a_1=\tfrac{2^{2q}}{p+1}, \quad b_1=\tfrac{2^{3q+1}}{p+1} R^{q+1},\quad c_1=R^{p}
, \quad d_1=\tfrac{2^q}{p+1} R^{2q+2},\\
         a_2=2^{2q-1}, \quad b_2=\tfrac{2^{\tfrac{6q-1}{2}}}{p+1} R^{\tfrac{2q+1}{2}},\quad c_2=R^{p}, \quad d_2=\tfrac{2^{\tfrac{2q-1}{2}}}{p+1} R^{2q+1},
        \end{array}
    \end{equation}
    for $\tau\geq 0$. 
\end{lem}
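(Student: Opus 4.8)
The plan is to check the two requirements of Definition~\ref{def:normDominat} for $\rho=d_{p+1}$ on $S=B_R$: that $\widehat\theta_R$ is convex, and that $\beta_{d_{p+1}}(x,y)\le\widehat\theta_R(\|x-y\|)$ for every $x\in B_R$ and $y\in\E$. Convexity is immediate, since in both branches $\widehat\theta_R$ is a positive multiple of $\tau^{p+1}$ (convex on $\R_+$, as $p+1\ge 3$) plus a nonnegative constant; note in passing that $\widehat\theta_R(0)=\beta_i d_i>0$ rather than $0$, so the relevant and only nontrivial content is the domination inequality, the nonzero value at the origin merely loosening the majorant where it is used as an upper bound (e.g.\ on $\beta_{\rho}(z_0,z_k^*)$ in Theorem~\ref{thm:wellDefinedAlg3}).

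First I would write the Bregman distance explicitly. Using $\nabla d_{p+1}(x)=\|x\|^{p-1}Bx$ from \eqref{eq:dp1Der} and $\innprod{Bx}{x}=\|x\|^2$, one gets
\[
  \beta_{d_{p+1}}(x,y)=\tfrac{1}{p+1}\|y\|^{p+1}-\tfrac{1}{p+1}\|x\|^{p+1}-\|x\|^{p-1}\omega,\qquad \omega:=\innprod{Bx}{y-x}.
\]
Setting $r=\|x\|\le R$ and $\tau=\|x-y\|$, Cauchy--Schwarz gives $|\omega|\le r\tau\le R\tau$, so the last two terms are controlled by $R^p\tau$, and it remains to estimate $\tfrac{1}{p+1}\|y\|^{p+1}$ with $\|y\|^2=r^2+2\omega+\tau^2$. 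The parity of $p$ dictates how this is done, and this is precisely what splits \eqref{eq:dp1GradDomin} into two cases. For odd $p=2q+1$ the exponent $p+1=2q+2$ is even, so $\|y\|^{p+1}=(r^2+2\omega+\tau^2)^{q+1}$ is a genuine polynomial whose monomials I can bound termwise through $|\omega|\le R\tau$ and $r\le R$. For even $p=2q$ the power $p+1$ is odd and $\|y\|^{p+1}$ is not polynomial; I would instead factor $\|y\|^{p+1}=\|y\|\,(\|y\|^2)^{q}$, bound the stray factor by $\|y\|\le R+\tau$, and expand the polynomial part --- and it is exactly this detour that produces the half-integer exponents $\tfrac{2q+1}{2}$ appearing in $b_2,d_2$. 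In both cases the goal is to organise the resulting terms, via the power-mean inequality $(s+t)^m\le 2^{m-1}(s^m+t^m)$ and weighted AM--GM, into a three-term raw estimate $\beta_{d_{p+1}}(x,y)\le a_i\tau^{p+1}+(b_i+1)\tau^{(p+1)/2}+d_i$, the intermediate geometric-mean power $\tau^{(p+1)/2}$ absorbing the mixed-degree contributions and the powers of $2$ in \eqref{eq:abcd} tracking the applications of the power-mean bound.

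The final step absorbs the middle term into the leading term and the constant. Writing $u=\tau^{(p+1)/2}$, the majorant $\widehat\theta_R(\tau)=\alpha_i a_i\tau^{p+1}+\beta_i d_i$ dominates the raw estimate exactly when the quadratic $(\alpha_i-1)a_i u^2-(b_i+1)u+(\beta_i-1)d_i$ is nonnegative for all $u\ge 0$, i.e.\ when its discriminant is nonpositive, $(b_i+1)^2\le 4(\alpha_i-1)(\beta_i-1)a_id_i$. Introducing a balancing parameter $\lambda=c_i^{-\gamma_i}$ with $c_i=R^p$ and setting $(\alpha_i-1)a_i=\tfrac{1}{2}(b_i+1)\lambda$ and $(\beta_i-1)d_i=\tfrac{1}{2}(b_i+1)\lambda^{-1}$ makes the discriminant vanish and reproduces verbatim the endpoint values of $\alpha_i,\beta_i$ recorded in \eqref{eq:alphaBetaRange}, with $\gamma_i=\tfrac{q+1}{q}$ in the odd case and $\gamma_i=\tfrac{2q+1}{2(2q-1)}$ in the even case read off from $c_i^{\gamma_i}$. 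Combined with the convexity noted above, this establishes the domination and hence the lemma.

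The hard part is entirely in the second step, not the absorption. The even case --- where $\|y\|^{p+1}$ cannot be expanded polynomially and one must trade the factor $\|y\|$ for $R+\tau$ --- is what forces the half-integer exponents and the asymmetry between the two branches, and reaching the precise constants $a_i,b_i,d_i$ of \eqref{eq:abcd} demands a careful, uniform-in-$r$ accounting of every monomial so that all mixed terms land on the single power $\tau^{(p+1)/2}$ with the stated coefficient. Once that raw three-term estimate is secured, the discriminant computation closing the proof is routine.
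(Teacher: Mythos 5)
Your overall architecture matches the paper's proof: write $\beta_{d_{p+1}}(x,y)$ explicitly via \eqref{eq:dp1Der}, expand $\|y\|^{p+1}=\bigl(\|x\|^2+2\innprod{Bx}{h}+\|h\|^2\bigr)^{(p+1)/2}$ with the power-mean inequality $(s+t)^m\leq 2^{m-1}(s^m+t^m)$ (applied twice), observe that the parity of $p$ governs whether the exponent $(p+1)/2$ is an integer or a half-integer (whence the two branches and the exponents $\tfrac{2q+1}{2}$ in $b_2,d_2$), and finish by balancing the intermediate term against the leading term and the constant via an AM--GM/discriminant condition that reproduces the endpoints in \eqref{eq:alphaBetaRange}. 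Your side remark that $\widehat{\theta}_R(0)=\beta_i d_i>0$, so that \eqref{eq:normDominat} rather than the normalization $\theta_S(0)=0$ is the operative content, is a fair observation the paper does not make. (For the even case the paper does not factor out $\|y\|$ as you propose; it applies the power-mean inequality directly with the non-integer exponent $t=\tfrac{2q+1}{2}\geq 1$, which is cleaner, but both routes work.)

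The genuine gap is your claimed three-term raw estimate $a_i\tau^{p+1}+(b_i+1)\tau^{(p+1)/2}+d_i$. The expansion actually produces a \emph{four}-term bound, $a_i\tau^{p+1}+b_i\tau^{(p+1)/2}+c_i\tau+d_i$, where $c_i\tau=R^p\tau$ comes from Cauchy--Schwarz applied to $-\|x\|^{p-1}\innprod{Bx}{h}$. This linear term cannot be absorbed into $\tau^{(p+1)/2}$ alone: for $\tau\leq 1$ one has $\tau\geq\tau^{(p+1)/2}$, so $R^p\tau\leq\mathrm{const}\cdot\tau^{(p+1)/2}$ fails near $\tau=0$; any absorption must also spend part of the constant (or leading) budget. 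Consequently your substitution $u=\tau^{(p+1)/2}$ does not reduce the domination to a pure quadratic in $u$, and the discriminant criterion $(b_i+1)^2\leq 4(\alpha_i-1)(\beta_i-1)a_id_i$ is not by itself sufficient. In the paper the linear term is kept explicit, the inequality is divided by $\tau^{(p+1)/2}$ so the left side becomes $b_i+c_i\tau^{-q}$ (odd case), and the normalization $2\sqrt{(\alpha_i-1)(\beta_i-1)a_id_i}-b_i=1$ converts the leftover into the constraint $c_1\leq\bigl(\tfrac{(\beta_1-1)d_1}{(\alpha_1-1)a_1}\bigr)^{q/(2q+2)}$; it is exactly this constraint that forces the exponents $c_1^{\pm(q+1)/q}$ and $c_2^{\pm(2q+1)/(2(2q-1))}$ in \eqref{eq:alphaBetaRange}. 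Your proposal instead introduces $\lambda=c_i^{-\gamma_i}$ and ``reads off'' $\gamma_i$ from the answer --- a symptom of the missing step, since with a genuinely three-term estimate and no residual linear term the value of $\gamma_i$ would be arbitrary. The fix is to restore the $c_i\tau$ term and carry out the balancing as the paper does; the rest of your outline then goes through.
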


\begin{proof}
    Let us first assume $p$ is an odd number, i.e., $p=2q+1$. For $x\in B_R$ and $y=x+h\in\E$, it follows from the inequality $\left(a^{1/t}+b^{1/t}\right)^t\leq 2^{t-1}(a+b)$ for $a,b\geq 0$ and $t\geq 1$ that
    \begin{align*}
        \beta_{d_{p+1}}(x,y) &= \tfrac{1}{p+1} \|y\|^{p+1}-\tfrac{1}{p+1} \|x\|^{p+1}- \|x\|^{p-1}\innprod{Bx}{y-x}\\
        &= \tfrac{1}{p+1} \left[\|x\|^2+\left(2\innprod{Bx}{h}+\|h\|^2\right)\right]^{q+1}-\tfrac{1}{p+1} \|x\|^{p+1}- \|x\|^{p-1}\innprod{Bx}{h}\\
        &\leq \tfrac{2^q}{p+1} \left[\|x\|^{2q+2}+\left(2\innprod{Bx}{h}+\|h\|^2\right)^{q+1}\right]-\tfrac{1}{p+1} \|x\|^{p+1}- \|x\|^{p-1}\innprod{Bx}{h}\\
        &\leq \tfrac{2^q}{p+1} \left[\|x\|^{2q+2}+2^q\left(2^{q+1}\innprod{Bx}{h}^{q+1}+\|h\|^{2q+2}\right)\right]-\tfrac{1}{p+1} \|x\|^{p+1}- \|x\|^{p-1}\innprod{Bx}{h}\\
        &\leq \tfrac{2^q}{p+1} \|x\|^{2q+2}+\tfrac{2^{3q+1}}{p+1} \|x\|^{q+1} \|h\|^{q+1}+\tfrac{2^{2q}}{p+1}\|h\|^{2q+2} + \|x\|^{p}\|h\|.
    \end{align*}
    Together with $x\in B_{R}$, this implies
    \begin{align*}
       \beta_{d_{p+1}}(x,y) \leq \tfrac{2^q}{p+1} R^{2q+2}+\tfrac{2^{3q+1}}{p+1} R^{q+1} \tau^{q+1}+\tfrac{2^{2q}}{p+1}\tau^{2q+2} + R^{p}\tau.
    \end{align*}
    For even $p$, $p=2q$ with $q\geq 1$, $x\in B_R$ and $y+h\in\E$, it follows from $\left(a^{1/t}+b^{1/t}\right)^t\leq 2^{t-1}(a+b)$ for $a,b\geq 0$ and $t\geq 1$ that
    \begin{align*}
        \beta_{d_{p+1}}(x,y) &= \tfrac{1}{p+1} \|y\|^{p+1}-\tfrac{1}{p+1} \|x\|^{p+1}- \|x\|^{p-1}\innprod{Bx}{y-x}\\
        &= \tfrac{1}{p+1} \left[\|x\|^2+\left(2\innprod{Bx}{h}+\|h\|^2\right)\right]^{\tfrac{2q+1}{2}}-\tfrac{1}{p+1} \|x\|^{p+1}- \|x\|^{p-1}\innprod{Bx}{h}\\
        &\leq \tfrac{2^{\tfrac{2q-1}{2}}}{p+1} \left[\|x\|^{2q+1}+\left(2\innprod{Bx}{h}+\|h\|^2\right)^{\tfrac{2q+1}{2}}\right]-\tfrac{1}{p+1} \|x\|^{p+1}- \|x\|^{p-1}\innprod{Bx}{h}\\
        &\leq \tfrac{2^{\tfrac{2q-1}{2}}}{p+1} \left[\|x\|^{2q+1}+2^{\tfrac{2q-1}{2}}\left(2^{\tfrac{2q+1}{2}}\innprod{Bx}{h}^{\tfrac{2q+1}{2}}+\|h\|^{2q+1}\right)\right]- \|x\|^{p-1}\innprod{Bx}{h}\\
        &\leq \tfrac{2^{\tfrac{2q-1}{2}}}{p+1} \|x\|^{2q+1}+ \tfrac{2^{\tfrac{6q-1}{2}}}{p+1} \|x\|^{\tfrac{2q+1}{2}} \|h\|^{\tfrac{2q+1}{2}}+ 2^{2q-1} \|h\|^{2q+1}+\|x\|^{p}\|h\|.
    \end{align*}
    Combining with $x\in B_{R}$, it holds that
    \begin{align*}
        \beta_{d_{p+1}}(x,y) \leq \tfrac{2^{\tfrac{2q-1}{2}}}{p+1} R^{2q+1}+ \tfrac{2^{\tfrac{6q-1}{2}}}{p+1} R^{\tfrac{2q+1}{2}} \tau^{\tfrac{2q+1}{2}}+ 2^{2q-1} \tau^{2q+1}+R^{p}\tau,
    \end{align*}
    which leads to
    \begin{equation*}
        \widehat{\theta}_R(\tau) = \left\{
        \begin{array}{ll}
           \tfrac{2^{2q}}{p+1}\tau^{2q+2} +\tfrac{2^{3q+1}}{p+1} R^{q+1} \tau^{q+1}+ R^{p}\tau+\tfrac{2^q}{p+1} R^{2q+2}  & \mathrm{if}~ p=2q+1,\vspace{2mm}\\
           2^{2q-1} \tau^{2q+1}+\tfrac{2^{\tfrac{6q-1}{2}}}{p+1} R^{\tfrac{2q+1}{2}} \tau^{\tfrac{2q+1}{2}}+ R^{p}\tau+\tfrac{2^{\tfrac{2q-1}{2}}}{p+1} R^{2q+1}  & \mathrm{if}~ p=2q.
        \end{array}
        \right.
    \end{equation*}
    To further simplify our upper bounds, for $p=2q+1$, we search for $\alpha, \beta > 1$ such that
    \begin{align*}
        a_1 \tau^{2q+2}+b_1\tau^{q+1}+c_1\tau+d_1\leq \alpha_1 a_1 \tau^{2q+2}+ \beta_1 d_1,
    \end{align*}
    or equivalently 
    \begin{align*}
        b_1+\tfrac{c_1}{\tau^q}\leq (\alpha_1-1) a_1 \tau^{q+1}+ (\beta_1-1) \tfrac{d_1}{\tau^{q+1}}.
    \end{align*}
    Now, minimizing the right-hand-side of this inequality with respect to $\tau$ leads to the optimal point $\widehat{\tau}_1=\left(\tfrac{(\beta_1-1)d_1}{(\alpha_1-1)a_1}\right)^{\tfrac{1}{2q+2}}$. Substituting this into the last inequality, we come to
    \begin{align*}
        c_1\leq \left[2\sqrt{(\alpha_1-1)(\beta_1-1)a_1d_1}-b_1\right] \left(\tfrac{(\beta_1-1)d_1}{(\alpha_1-1)a_1}\right)^{\tfrac{q}{2q+2}}.
    \end{align*}
    Let us set $2\sqrt{(\alpha_1-1)(\beta_1-1)a_1d_1}-b_1=1$ that consequently implies $c_1\leq\left(\tfrac{(\beta_1-1)d_1}{(\alpha_1-1)a_1}\right)^{\tfrac{q}{2q+2}}$ and $(\beta_1-1)=\tfrac{(b_1+1)^2}{4a_1d_1(\alpha_1-1)}$, i.e.,
    \begin{align*}
        1<\alpha_1 \leq 1+\tfrac{b_1+1}{2a_1}c_1^{-\tfrac{q+1}{q}}, \quad 1+\tfrac{b_1+1}{2d_1}c_1^{\tfrac{q+1}{q}}\leq \beta_1.
    \end{align*}
    giving \eqref{eq:dp1GradDomin} for $p=2q+1$. On the other hand, for $p=2q$, we explore the constants $\alpha_2, \beta_2 > 1$ such that the inequality
    \begin{align*}
        a_2 \tau^{2q+1}+b_2\tau^{\tfrac{2q+1}{2}}+c_2\tau+d_2\leq \alpha_2 a_2 \tau^{2q+1}+ \beta_2 d_2
    \end{align*}
    holds, which is equivalent to
    \begin{align*}
        b_2+c_2 \tau^{-\tfrac{2q-1}{2}}\leq (\alpha_2-1) a_2 \tau^{\tfrac{2q+1}{2}}+ (\beta_2-1)d_2 \tau^{-\tfrac{2q+1}{2}}.
    \end{align*}
    Let us minimize the right-hand-side of the latter inequality with respect to $\tau$ leading to the solution $\widehat{\tau}_2=\left(\tfrac{(\beta_2-1)d_2}{(\alpha_2-1)a_2}\right)^{\tfrac{1}{2q+1}}$. Now, by substituting this into point the last inequality, we get
    \begin{align*}
        c_2\leq \left[2\sqrt{(\alpha_2-1)(\beta_2-1)a_2d_2}-b_2\right] \left(\tfrac{(\beta_2-1)d_2}{(\alpha_2-1)a_2}\right)^{\tfrac{2q-1}{2q+1}}.
    \end{align*}
    Setting $\sqrt{(\alpha_2-1)(\beta_2-1)a_2d_2}-b_2=1$, it holds that
    \begin{align*}
        c_2\leq \left(\tfrac{(\beta_2-1)d_2}{(\alpha_2-1)a_2}\right)^{\tfrac{2q-1}{2q+1}}, \quad \beta_2-1=\tfrac{(b_2+1)^2}{4a_2d_2(\alpha_2-1)}
    \end{align*}
    which leads to the inequalities
    \begin{align*}
        1<\alpha_2 \leq 1+\tfrac{b_2+1}{2a_2}c_2^{-\tfrac{2q+1}{2(2q-1)}}, \quad 1+\tfrac{b_2+1}{2d_2} c_2^{\tfrac{2q+1}{2(2q-1)}}\leq \beta_2,
    \end{align*}
    giving \eqref{eq:dp1GradDomin}.
\end{proof}

Applying Lemma~\ref{lem:d4GradDomin}, we next show that $\rho_k(\cdot)$ is norm-dominated.

\begin{lem}[norm-dominatedness of the scaling function $\rho_k(\cdot)$]\label{lem:rhokGradDomin}
    Let $p\geq 2$ and $q=\floor{p/2}$. Then, the scaling function $\rho_k(\cdot)$ is norm-dominated on the Euclidean ball $B_{D_1}(y_k)=\set{x\in\E\mid \|x-y_k\|\leq D_1}$ by
    \begin{equation}\label{eq:rhokGradDomin}
        \theta_R(\tau) = \tfrac{\widehat{L}}{2} \tau^2+\left\{
        \begin{array}{ll}
           \alpha_1 a_1 \tau^{2q+2}+ \beta_1 d_1  &~~ \mathrm{if}~ p=2q+1,\vspace{2mm}\\
           \alpha_2 a_2 \tau^{2q+1}+ \beta_2 d_2  &~~ \mathrm{if}~ p=2q,
        \end{array}
        \right.
    \end{equation}
    with $\tau\geq 0$, $\widehat{L}=\sum_{k=1}^{q} \tfrac{4(1-(2D_1^2)^{2k-1})\sum_{i=1}^{2k-1}\binom{2k-1}{i}}{(1-2D_1^2)(2k-1)!}$, and $\alpha_1,\beta_1,\alpha_2,\beta_2>1$ and $a_1, d_1, a_2, d_2\geq 0$ are defined in \eqref{eq:alphaBetaRange} and \eqref{eq:abcd}, respectively.
\end{lem}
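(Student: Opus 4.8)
The plan is to exploit the linearity of the Bregman distance in its generating function and to split $\rho_k(\cdot)$ into a polynomial ``Taylor part'' and the power-prox part. Writing $\rho_k(x)=\phi_k(x)+H\,d_{p+1}(x-y_k)$ with $\phi_k(x)=\sum_{\ell=1}^{q}\tfrac{1}{(2\ell)!}D^{2\ell}f(y_k)[x-y_k]^{2\ell}$, the linearity of $h\mapsto\beta_h$ together with the translation invariance of $d_{p+1}$ gives
\[
\beta_{\rho_k}(x,y)=\beta_{\phi_k}(x,y)+H\,\beta_{d_{p+1}}(x-y_k,\,y-y_k).
\]
Since $x\in B_{D_1}(y_k)$ forces $x-y_k\in B_{D_1}$ and $\|(y-y_k)-(x-y_k)\|=\|y-x\|$, Lemma~\ref{lem:d4GradDomin} with $R=D_1$ bounds the second term by $H\,\widehat{\theta}_{D_1}(\|y-x\|)$, which is (up to the scaling $H$ and the slack in the admissible constants $\alpha_i$) exactly the case-dependent part $\alpha_1 a_1\tau^{2q+2}+\beta_1 d_1$ or $\alpha_2 a_2\tau^{2q+1}+\beta_2 d_2$ of $\theta_R$. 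So the whole problem reduces to controlling the Taylor part $\beta_{\phi_k}$.

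For $\beta_{\phi_k}$ I would compute term by term. Each summand $g_\ell(x)=\tfrac{1}{(2\ell)!}D^{2\ell}f(y_k)[x-y_k]^{2\ell}$ is homogeneous of degree $2\ell$ in $u:=x-y_k$, so writing $h:=y-x$, expanding $\nabla g_\ell(x+th)-\nabla g_\ell(x)$ by the binomial theorem and integrating $\beta_{g_\ell}(x,y)=\int_0^1\langle\nabla g_\ell(x+th)-\nabla g_\ell(x),h\rangle\,dt$ yields
\[
\beta_{g_\ell}(x,y)=\frac{1}{(2\ell-1)!}\sum_{i=1}^{2\ell-1}\frac{1}{i+1}\binom{2\ell-1}{i}D^{2\ell}f(y_k)[u]^{\,2\ell-1-i}[h]^{\,i+1}.
\]
Bounding each multilinear form by $M_{2\ell}(f)\,\|u\|^{2\ell-1-i}\|h\|^{i+1}$ and using $\|u\|\le D_1$ on $B_{D_1}(y_k)$ turns $\beta_{\phi_k}(x,y)$ into a finite nonnegative combination of monomials $\|h\|^{i+1}$ of degrees between $2$ and $2q$. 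I would then extract a factor $\|h\|^2$ from each monomial and estimate the residual $\|h\|^{i-1}$ against powers of $\|u\|^2\le D_1^2$ through the same power-mean splitting $(a^{1/t}+b^{1/t})^t\le 2^{t-1}(a+b)$ employed in Lemma~\ref{lem:d4GradDomin}; tracking the binomial weights $\sum_{i=1}^{2\ell-1}\binom{2\ell-1}{i}=2^{2\ell-1}-1$ against the geometric sums $\sum_{m=0}^{2\ell-2}(2D_1^2)^m=\tfrac{1-(2D_1^2)^{2\ell-1}}{1-2D_1^2}$ and summing over $\ell=1,\dots,q$ is what reproduces the stated closed form of $\widehat L$.

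Combining the two parts gives $\beta_{\rho_k}(x,y)\le\tfrac{\widehat L}{2}\tau^2+\widehat{\theta}_{D_1}(\tau)$ with $\tau=\|y-x\|$, which is $\theta_R(\tau)$; convexity of $\theta_R$ and $\theta_R(0)\ge 0$ then follow since it is a nonnegative combination of even powers of $\tau$ plus a constant. I expect the main obstacle to be precisely the collapsing step in the previous paragraph: the Taylor part genuinely grows like $\|h\|^{2q}$, so it cannot be dominated by a pure quadratic for all $y\in\E$, and the two-term template of $\theta_R$ survives only because the surplus powers $\|h\|^{i+1}$ with $i\ge 2$ are split, via $\tau^{j}\le\tau^2+\tau^{p+1}$ for $2\le j\le p+1$, between the quadratic coefficient $\widehat L$ (this is where the ball constraint $\|x-y_k\|\le D_1$ is spent) and the leading $d_{p+1}$ term $\tau^{p+1}$ (whose admissible-constant range in Lemma~\ref{lem:d4GradDomin} leaves room to absorb them, noting $2q<p+1$ in both parities). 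Matching these absorbed quantities to the exact closed form of $\widehat L$, rather than to some coarser majorant, is where essentially all the combinatorial accounting lives.
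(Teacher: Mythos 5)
Your skeleton coincides with the paper's: split $\rho_k=\widehat\rho_k+H\,d_{p+1}(\cdot-y_k)$, use additivity of the Bregman distance, and invoke Lemma~\ref{lem:d4GradDomin} with $R=D_1$ for the prox part. The divergence — and the gap — is in the Taylor part. The paper does \emph{not} expand $\beta_{\widehat\rho_k}$ into monomials of $\|h\|$; it proves that $\nabla\widehat\rho_k$ is $\widehat L$-Lipschitz on $B_{D_1}(y_k)$ (by a binomial expansion of $[x-y_k]^{2k-1}-[y-y_k]^{2k-1}$, which is exactly where the closed form of $\widehat L$ with the geometric sums $\tfrac{1-(2D_1^2)^{2k-1}}{1-2D_1^2}$ comes from) and then concludes $\beta_{\widehat\rho_k}(x,y)\le\tfrac{\widehat L}{2}\|x-y\|^2$ in one stroke, placing the \emph{entire} Taylor contribution into the quadratic term. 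Your route instead produces, correctly, a sum of forms $D^{2\ell}f(y_k)[u]^{2\ell-1-i}[h]^{i+1}$ with $2\le i+1\le 2q$, and then proposes to redistribute the surplus powers $\tau^j$, $3\le j\le 2q$, between the $\tau^2$ slot and the $\tau^{p+1}$ slot via $\tau^j\le\tau^2+\tau^{p+1}$.

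That redistribution step is where the proof fails to close. The target $\theta_R$ pins \emph{both} receiving coefficients: the quadratic coefficient must come out to exactly the stated $\widehat L/2$, and the top-degree coefficient must be $\alpha_i a_i$ with $\alpha_i$ confined to the admissible interval \eqref{eq:alphaBetaRange} — an interval that is bounded above and was already fully spent in Lemma~\ref{lem:d4GradDomin} to absorb the intermediate terms $b_i$ and $c_i$ arising from $d_{p+1}$ alone. There is no slack reserved there for an additional Taylor-part surplus, and your quadratic coefficient would in any case not match the stated $\widehat L$, since that constant is the Lipschitz modulus of $\nabla\widehat\rho_k$, not the output of a monomial-splitting argument. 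You flag this accounting as "where essentially all the combinatorial work lives," but it is not a bookkeeping detail: as set up, it cannot terminate in \eqref{eq:rhokGradDomin} with the constants of \eqref{eq:alphaBetaRange}--\eqref{eq:abcd}. Your underlying observation — that $\beta_{\widehat\rho_k}(x,y)$ grows like $\|h\|^{2q}$ and so cannot be quadratically dominated for arbitrary $y\in\E$ — is a legitimate criticism of the statement's template (the paper's own quadratic bound is only justified for $y\in B_{D_1}(y_k)$ as well), but the proof the lemma actually admits is the Lipschitz-gradient one, with the bound understood on the ball; to land on the stated $\widehat L$ you must take that route.
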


\begin{proof}
    Let us define the function $\func{\widehat{\rho}_k}{\E}{\R}$ given by
    \begin{align*}
        \widehat{\rho}_k(x) = \sum_{k=1}^{q} \tfrac{1}{(2k)!} D^{2k} f(y_k)[x-y_k]^{2k},
    \end{align*}
    where $\nabla \widehat{\rho}_k(x)=\sum_{k=1}^{q} \tfrac{1}{(2k-1)!} D^{2k} f(y_k) [x-y_k]^{2k-1}$. For $x,y\in B_{D_1}(y_k)$, we consequently have
    \begin{align*}
        \|\nabla \widehat{\rho}_k(x)&-\nabla \widehat{\rho}_k(y)\| =\left\|\sum_{k=1}^{q} \tfrac{1}{(2k-1)!} D^{2k} f(y_k)\left([x-y_k]^{2k-1}-[y-y_k]^{2k-1}\right)\right\|\\
        &\leq \sum_{k=1}^{q} \tfrac{1}{(2k-1)!} \|D^{2k} f(y_k)\| \left\|[x-y+y-y_k]^{2k-1}-[y-x+x-y_k]^{2k-1}\right\|\\
        &\leq \sum_{k=1}^{q} \tfrac{1}{(2k-1)!} \|D^{2k} f(y_k)\| \left\|\sum_{i=1}^{2k-1}\binom{2k-1}{i}[x-y]^i\left([y-y_k]^{2k-1-i}-[x-y_k]^{2k-1-i}\right)\right\|\\
        &\leq \sum_{k=1}^{q} \tfrac{1}{(2k-1)!} \|D^{2k} f(y_k)\| \left(\tfrac{2}{D_1^2}\sum_{i=1}^{2k-1}\binom{2k-1}{i} (2D_1^2)^i\right) \|x-y\|\\
        &\leq \sum_{k=1}^{q} \tfrac{1}{(2k-1)!} \|D^{2k} f(y_k)\| \left(\left(\sum_{i=1}^{2k-1}\binom{2k-1}{i}\right) \tfrac{4(1-(2D_1^2)^{2k-1})}{1-2D_1^2}\right) \|x-y\|\\
        &\leq \left(\sum_{k=1}^{q} \tfrac{4(1-(2D_1^2)^{2k-1})\sum_{i=1}^{2k-1}\binom{2k-1}{i}}{(1-2D_1^2)(2k-1)!} \right) \|x-y\|=\widehat{L} \|x-y\|,
    \end{align*}
    which means that the function $\widehat{\rho}_k(\cdot)$ is $\widehat{L}$-smooth. 
    
    From the definition of $\rho_k(\cdot)$ and the $\widehat{L}$-smoothness of $\widehat{\rho}_k(\cdot)$, we obtain
    \begin{align*}
        \beta_{\rho_k}(x,y) &= \widehat{\rho}_k(y)-\widehat{\rho}_k(x)-\innprod{\nabla \widehat{\rho}_k(x)}{y-x}\\
        &~~~+ H \left(d_{p+1}(y-y_k)-d_{p+1}(x-y_k)-\innprod{\nabla d_{p+1}(x-y_k)}{y-x} \right)\\
        &\leq \tfrac{\widehat{L}}{2} \|x-y\|^2+H \beta_{d_{p+1}}(x-y_k,y-y_k).
    \end{align*}
    Together with \eqref{eq:dp1GradDomin}, this establishes our claim.
\end{proof}

We now have all the ingredients to address the complexities of the upper and lower levels of Algorithm \ref{alg:supFP1OA}, which is the main result of this section. To this end, for the auxiliary minimization problem \eqref{eq:zk1}, we assume 
\begin{equation}\label{eq:assumMain}
    M_p(f)<+\infty,\quad R_0=\|x_0-x^*\|,\quad D_0=\max_{z\in \dom \psi} \set{\|z-x^*\| ~:~ F(z)\leq F(x_0)}<+\infty.
\end{equation}
Let us set $S=\set{z\in\dom \psi ~:~ \|z-x^*\| \leq 2 R_0}$ and assume
\begin{equation}
    F_S = \sup_{z\in S} F(z)<+\infty.
\end{equation}

\begin{thm}[complexity of Algorithm \ref{alg:supFP1OA}]
\label{thm:CompSupFP1OA}
    Let us assume that all conditions of Theorem \ref{thm:wellDefinedAlg3} hold, and let $p\geq 2$ and $q=\floor{p/2}$. Then, Algorithm \ref{alg:supFP1OA} attains an $\varepsilon$-solution of the problem \eqref{eq:prob} in
    \begin{align*}
        (2p+2) \left(\tfrac{3p M_{p+1}(f)}{(p-1)(p+1)(p-1)! \varepsilon}\right)^{\tfrac{1}{p+1}} R_0
    \end{align*}
    iterations, for the accuracy parameter $\varepsilon>0$. Moreover, the auxiliary problem \eqref{eq:zk1} is approximately solved by Algorithm \ref{alg:zk1} in at most
    \begin{equation}\label{eq:1/varepsilon}
    	1+ \tfrac{2(p+1)}{\kappa}\log\left(\tfrac{\tfrac{D(L-\mu)\overline{L}}{\beta} \left(\tfrac{2(p+1)}{\sigma}\log \theta_R(D_1)\right)^{1/(p+1)}}{ \varepsilon} \right)
    \end{equation}
iterations. 
\end{thm}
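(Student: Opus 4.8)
The plan is to establish the two estimates separately: the outer iteration count follows from the accelerated convergence rate, and the inner iteration count follows from the well-definedness bound for the composite gradient solver, once all of its constants are rewritten in terms of the problem data.

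For the outer count, I would first note that Algorithm~\ref{alg:supFP1OA} is exactly Algorithm~\ref{alg:aihoppa} executed with the parameters fixed in \eqref{eq:parSuperfastAlg1}--\eqref{eq:parSuperfastAlg2}. In particular $\xi=2$ is the unique root of the quadratic \eqref{eq:xixi1Pth} precisely when $H=\tfrac{6}{(p-1)!}M_{p+1}(f)$, which simultaneously secures $H\geq M_{p+1}(f)$ and the constants $\mu=\tfrac12$, $L=\tfrac32$, $\kappa=\tfrac13$ demanded by Theorem~\ref{lem:relSmoothfbarPth}. Consequently the estimate \eqref{eq:aihoppConv} of Theorem~\ref{thm:aihppConv} applies verbatim; inserting $\beta=\tfrac1p$, the above value of $H$, and $d_{p+1}(x_0-x^*)=\tfrac{1}{p+1}R_0^{p+1}$ collapses its leading constant to $\tfrac{3p\,M_{p+1}(f)}{(p-1)(p+1)(p-1)!}R_0^{p+1}$. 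Imposing $F(x_k)-F^*\leq\varepsilon$ and solving the resulting inequality $\bigl(\tfrac{2p+2}{k}\bigr)^{p+1}\leq\tfrac{(p-1)(p+1)(p-1)!\,\varepsilon}{3p\,M_{p+1}(f)R_0^{p+1}}$ for $k$ reproduces exactly the stated number of outer iterations.

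For the inner count, I would invoke Theorems~\ref{thm:nabla2Ineq} and~\ref{lem:relSmoothfbarPth} to certify that the scaling function $\rho_{y_k,H}(\cdot)$ of \eqref{eq:rhoBarxPth} satisfies (H1)--(H3) with the same parameters, so that the hypotheses of Theorem~\ref{thm:wellDefinedAlg3} hold and its bound \eqref{eq:lowerBoundi1} controls $i_k^*$. It then remains to convert that bound into the form \eqref{eq:1/varepsilon}. Substituting the constant $C$ from \eqref{eq:Gi1Ineq} gives $\bigl(\tfrac{2L}{C}\bigr)^{1/(p+1)}=(L-\mu)\overline L\bigl(\tfrac{2(p+1)}{\sigma}\bigr)^{1/(p+1)}$, which explains both the prefactor $(L-\mu)\overline L$ and the ratio $\tfrac{2(p+1)}{\sigma}$ in \eqref{eq:1/varepsilon}. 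Finally, since $z_0=y_k$ and the inner iterates remain in the ball $B_{D_1}(y_k)$, the norm-dominatedness of $\rho_k$ (Lemma~\ref{lem:rhokGradDomin}, inequality \eqref{eq:normDominat}) yields $\br(z_0,z_k^*)\leq\theta_R(D_1)$, and inserting this into the logarithm of \eqref{eq:lowerBoundi1} produces \eqref{eq:1/varepsilon}.

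The main obstacle I anticipate is the geometric bookkeeping required to apply the norm-dominatedness lemma and Theorem~\ref{thm:wellDefinedAlg3} on one explicit region. Concretely, one must produce a single radius $D$ with $\|z_i-x^*\|\leq D$ for every inner iterate and confirm $z_k^*\in B_{D_1}(y_k)$, which forces one to track the drift of the prox-centers $y_k$. Here Theorem~\ref{thm:aihppConv}(iii) is the key tool: the bound $d_{p+1}(\upsilon_k-x^*)\leq 2^{p-1}d_{p+1}(x_0-x^*)$ keeps $\upsilon_k$, and hence the convex combination $y_k$, inside the set $S$ of radius $2R_0$, so that $F_S<+\infty$ bounds $\varphi_k(z_0)-F^*$ and thereby the level-set radius $\Delta_k$ of \eqref{eq:levelSetPsik} on which $\overline L$ is defined. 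Assembling these radius estimates---together with the finiteness of $\overline L$ guaranteed by Theorem~\ref{thm:nabla2Ineq} under $M_2(f),M_4(f),M_{p+1}(f)<+\infty$---into the constants $D$, $D_1$, $R$ entering \eqref{eq:1/varepsilon} is the delicate step; the algebraic substitutions above are then routine.
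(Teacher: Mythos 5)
Your proposal is correct and follows essentially the same route as the paper: the outer count comes from substituting $\beta=\tfrac1p$, $H=\tfrac{6}{(p-1)!}M_{p+1}(f)$ and $d_{p+1}(x_0-x^*)=\tfrac{1}{p+1}R_0^{p+1}$ into \eqref{eq:aihoppConv}, and the inner count from plugging $C$ and the norm-dominatedness bound $\br(z_0,z_k^*)\leq\theta_R(D_1)$ into \eqref{eq:lowerBoundi1}, with the same geometric bookkeeping (Theorem~\ref{thm:aihppConv}(iii) to control $\upsilon_k$ and $y_k$, monotonicity and uniform convexity to get $\|z_i-y_k\|\leq D_1$ and hence $\|z_i-x^*\|\leq D$). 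Your outer computation is in fact more explicit than the paper's one-line appeal to Theorem~\ref{thm:aihppConv}, and your reading of the logarithm (inserting $\theta_R(D_1)$ rather than $\log\theta_R(D_1)$ into the bound) is the natural one.
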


\begin{proof}
    The complexity of Algorithm~\ref{alg:supFP1OA} is a direct result of Theorem \ref{thm:aihppConv}. In order to show the second statement, let us set $R=D_1$, i.e., $\|x-y_k\|\leq D_1$ for all $x\in B_{D_1}$. From Algorithm~\ref{alg:supFP1OA}, we have $y_k= \tfrac{A_{k-1}}{A_k}x_k+\tfrac{a_k}{A_k}\upsilon_k$ and $\|\upsilon_k-x^*\| \leq 2^{\tfrac{p-1}{p+1}} \|x_0-x^*\|$.
    Using the definition of $y_k$ and Theorem~\ref{thm:aihppConv}, we come to the inequality
    \begin{align*}
        \|y_k-x^*\| &\leq \tfrac{A_k}{A_{k+1}} \|x_k-x^*\| + \tfrac{a_{k+1}}{A_{k+1}} \|\upsilon_k-x^*\| \leq \tfrac{A_k+a_{k+1}}{A_{k+1}} \max\set{\|x_k-x^*\|,\|\upsilon_k-x^*\|}\\
        &\leq \max\set{D_0,2^{\tfrac{p-1}{p+1}}R_0}.
    \end{align*}
    Invoking Theorem \ref{thm:aihppConv}(iii), it holds that
    \begin{align*}
        \|\upsilon_k-x^*\| \leq 2^{\tfrac{p-1}{p+1}} \|x_0-x^*\| \leq 2 R_0,
    \end{align*}
    i.e., $\upsilon_k\in S$. Together with the convexity of $\psi(\cdot)$ and the monotonicity of the sequence $\seq{F(x_k)}$, this implies 
    \begin{align*}
        F(y_k) \leq \tfrac{A_k}{A_{k+1}} F(x_k) + \tfrac{a_{k+1}}{A_{k+1}} F(\upsilon_k) \leq \tfrac{A_k}{A_{k+1}} F(x_0)+ \tfrac{a_{k+1}}{A_{k+1}} F_S \leq \max \set{F(x_0),F_S}.
    \end{align*}
    It follows from \eqref{eq:monotonezk} that $\varphi_k(z_i)\leq \varphi_k(z_{i-1})\leq \cdots \leq \varphi_k(z_0)=F(y_k)$. Combining with \eqref{eq:betaRhoIneq1}, \eqref{eq:parSuperfastAlg1}, and \eqref{eq:dp1LowerBound}, this implies
    \begin{align*}
        \tfrac{3H}{(p+1)2^{p-1}} \|z_i-y_k\|^{p+1} \leq F(y_k) -F(z_i) \leq \max \set{F(x_0),F_S}-F^*,
    \end{align*}
    leading to $\|z_i-y_k\|\leq \left(\tfrac{(p+1)2^{p-1}}{3H} (\max \set{F(x_0),F_S}-F^*)\right)^{\tfrac{1}{p+1}}=D_1$. Hence, these inequalities yield
    \begin{align*}
        \|z_i-x^*\|&\leq \|z_i-y_k\| + \|y_k-x^*\| 
        \leq D_1+\max\set{D_0,2^{\tfrac{p-1}{p+1}}R_0}=D.
    \end{align*}
    This implies that all conditions of Theorem~\ref{thm:wellDefinedAlg3} are satisfied. On the other hand, from the definition $\theta_R(\cdot)$ given in \eqref{eq:rhokGradDomin}, we obtain
    \begin{align*}
        \log \theta_R(\|z_k^*-y_k\|) \leq \log\theta_R(D_1).
    \end{align*}
    Then, from the uniform convexity of $\rho_k(\cdot)$ with the degree $\eta=p+1$, \eqref{eq:rhokGradDomin}, $C=\tfrac{L\sigma}{(p+1)(L-\mu)^{p+1} \overline L^{p+1}}$, and the proof of Theorem \ref{thm:wellDefinedAlg3}, we come to
    \begin{align*}\label{eq:lowerBoundi1}
        i_k^*&\leq 1+ \tfrac{p+1}{-\log\left(1-\tfrac{\kappa}{2}\right)}\log\left(\tfrac{\tfrac{D}{\beta} \left(\tfrac{2L}{C}\log \theta_R(\|z_k^*-y_k\|)\right)^{1/(p+1)}}{ \varepsilon} \right),
    \end{align*}
    which leads to \eqref{eq:1/varepsilon}. 
\end{proof}

Let us fix $q\geq 1$. Then, the function $f_{y_k,H}^p(\cdot)$ is $L$-smooth and $\mu$-strongly convex relative to the scaling function $\rho_k(\cdot)$ \eqref{eq:rhoBarxPth}, which is the same for both $p=2q$ and $p=2q+1$. If $p$ is even (i.e., $p=2q$), then Algorithm \ref{alg:supFP1OA} is a $2q$-order method (requiring the $2q$-order oracle) and attains the complexity of order $\mathcal{O}(\varepsilon^{-1/(2q+1)})$, which worse than the optimal complexity $\mathcal{O}(\varepsilon^{-2/(6q+1)})$. On the other hand, if $p$ is odd ($p=2q+1$), then Algorithm \ref{alg:supFP1OA} is again a $2q$-order method (requiring the $2q$-order oracle) and obtains the complexity of order $\mathcal{O}(\varepsilon^{-1/(2q+2)})$, which is also worse than the optimal complexity $\mathcal{O}(\varepsilon^{-2/(6q+1)})$ except for $p=3$ overpassing the classical complexity bound of second-order methods, as discussed in \cite{nesterov2020inexact}. However, in the following example, we show that the complexity of our method can overpass the classical bounds for some structured class of problems.

\begin{exa}
    Let us consider the vector $b\in\R^N$, the vectors $a_i\in\R^n$ and the univariate functions $\func{f_i}{\R}{\R}$ that are four times continuously differentiable, for $i=1,\ldots,N$. Then, we define the function $\func{f}{\R^n}{\R}$ as
    \begin{equation}
        f(x)=\sum_{i=1}^N f_i(\innprod{a_i}{x}-b_i).
    \end{equation}
    We are interested to apply Algorithm~\ref{alg:supFP1OA} with $p=4$ and $p=5$ to the problem \eqref{eq:prob} with this function $f(\cdot)$. In case of $p=5$, $q=\floor{5/2}=2$ and we need to handle the subproblem 
    \begin{equation*}\label{eq:zk1p5}
        z_{i+1} = \argmin_{z\in\E}\set{\innprod{\nabla f_{y_k,H}^5(z_i)}{z-z_i}+\psi(z)+2L \beta_{\rho_k}(z_i,z)},
    \end{equation*}
    with 
    \begin{equation*}\label{eq:rhoBarxPth1}
        \rho_k(x)= \tfrac{1}{2}\innprod{\nabla^2 f(y_k)(x-y_k)}{x-y_k} + \tfrac{1}{24}D^4 f(y_k)[x-y_k]^4+H d_6(x-y_k),
    \end{equation*}
    which readily implies that our method requires fourth-order oracle of $f_i(\cdot)$, for $i=1,\ldots,N$. Let us emphsize that Theorem~\ref{thm:nabla2Ineq} implies that the sacling function $\rho_k(\cdot)$ is convex, which  is an interesting result even in one dimension and with $N=1$, i.e.,
\begin{align*}
 f''(y_k) + \tfrac{1}{2}f^{iv}(y_k)h^2+5H|h|^4 \succeq 0.
\end{align*} 
	In the same way, for $p=4$, we need fourth-order oracle of $f_i(\cdot)$, for $i=1,\ldots,N$.    
     Moreover, Theorem~\ref{thm:CompSupFP1OA} ensures that the sequence generated by Algorithm~\ref{alg:supFP1OA} attains the complexity $\mathcal{O}(\varepsilon^{-1/5})$ for $p=4$ and $\mathcal{O}(\varepsilon^{-1/6})$ for $p=5$, which are worse that the optimal complexity $\mathcal{O}(\varepsilon^{-2/13})$, for the accuracy parameter $\varepsilon$. On the other hand, setting $h=x-y_k$, it holds that
    \begin{align*}
        \innprod{\nabla^2 f(y_k)h}{h}= \sum_{i=1}^N \nabla^2 f_i(\innprod{a_i}{y_k}-b_i) \innprod{a_i}{h}^2,\quad D^4 f(y_k)[h]^4= \sum_{i=1}^N \nabla^4 f_i(\innprod{a_i}{y_k}-b_i) \innprod{a_i}{h}^4.
    \end{align*}
    Let us particularly verify these terms for $f_i(x)=-\log(x)$ ($i=1,\ldots,N$) for $x\in (0,+\infty)$, which consequently leads to
    \begin{align*}
        \nabla^2f_i(x)=\tfrac{1}{x^2}, \quad \nabla^4f_i(x)=\tfrac{6}{x^4}=6\left(\nabla^2f_i(x)\right)^2,
    \end{align*}
    i.e.,
    \begin{align*}
        D^4 f(y_k)[h]^4 = 6\sum_{i=1}^N \left(\nabla^2f_i(\innprod{a_i}{y_k}-b_i)\right)^2 \innprod{a_i}{h}^4.
    \end{align*}
    Thus, in this case, the implementation of Algorithm~\ref{alg:supFP1OA} with $p=4$ and $p=5$ only requires the second-order oracle of $f_i(\cdot)$ ($i=1,\ldots,N$) and the first-order oracle of $\psi(\cdot)$. Therefore, we end up with a second-order method with the complexity of order $\mathcal{O}(\varepsilon^{-1/5})$ for $p=4$ and $\mathcal{O}(\varepsilon^{-1/6})$ for $p=5$, which are much faster than the second-order methods optimal bound $\mathcal{O}(\varepsilon^{-2/7})$; however, choosing the odd order $p=5$, Algorithm~\ref{alg:supFP1OA} attains a better complexity. 
\end{exa}
\section{Conclusion}\label{sec:conclusion}
In this paper, we suggest a bi-level optimization (BiOPT), a novel framework for solving convex composite minimization problems, which is a generalization of the BLUM framework given in \cite{nesterov2020inexact} and involves two levels of methodologies. In the upper level, we only assume the convexity of the objective function and design some upper-level scheme using proximal-point iterations with arbitrary order. On the other hand, in the lower level, we need to solve the proximal-point auxiliary problem inexactly by some lower-level scheme. In this step, we require some more properties of the objective function for developing efficient algorithms providing acceptable solutions for this auxiliary problem at a reasonable computational cost. The overall complexity of the method will be the product of the complexities in both levels. 

We here develop the plain $p$th-order inexact proximal-point method and its acceleration using the estimation sequence technique that, respectively, achieve the convergence rate $\mathcal{O}(k^{-p})$ and $\mathcal{O}(k^{-(p+1)})$ for the iteration counter $k$. Assuming the $L$-smoothness and $\mu$-strong convexity of the differentiable part of the proximal-point objective relative to some scaling function (for $L, \mu>0$), we design a non-Euclidean composite gradient method to inexactly solve the proximal-point problem. It turns out that this method attains the complexity $\mathcal{O}(\log \tfrac{1}{\varepsilon})$, for the accuracy parameter $\varepsilon>0$.

In the BiOPT framework, we apply the accelerated $p$th-order proximal-point algorithm in the upper level, introduce a new high-order scaling function and show that the differentiable part of the auxiliary objective is smooth and strongly convex relative to this function, and solve the auxiliary problem by a non-Euclidean composite gradient method in the lower level. We consequently come to a bi-level high-order method with the complexity of order $\mathcal{O}(\varepsilon^{-1/(p+1)})$, which overpasses the classical complexity bound of second-order methods for $p=3$, as was known from \cite{nesterov2020inexact}. In general, for $p=2$ and $p\geq 3$, the complexity of our bi-level method is sub-optimal; however, we showed  that for some class of structured problems it can overpass the classical complexity bound $\mathcal{O}(\varepsilon^{-2/(3p+1)})$.
Overall, the BiOPT framework paves the way toward methodologies using the $p$th-order proximal-point operator in the upper level and requiring lower-order oracle than $p$ in the lower level. Therefore, owing to this framework, we can design lower-order methods with convergence rates overpassing the classical complexity bounds for convex composite problems. Hence, this will open up an entirely new ground for developing novel efficient algorithms for convex composite optimization that was not possible in the classical complexity theory. 

Several extensions of our framework are possible.
As an example, we will present some extension of our framework using a segment search in the upcoming article \cite{ahookhosh2021high1}. Moreover, the proximal-point auxiliary problem can be solved by some more efficient method like the non-Euclidean Newton-type method presented in \cite{ahookhosh2019bregman}. In addition, the introduced high-order scaling function can be employed to extend the second-order methods presented in \cite{nesterov2019implementable,nesterov2019inexact,nesterov2020inexact,nesterov2020superfast} to higher-order methods.
 

  		\bibliographystyle{plain}
	\bibliography{Bibliography}

\begin{thebibliography}{10}

\bibitem{agarwal2018lower}
Naman Agarwal and Elad Hazan.
\newblock Lower bounds for higher-order convex optimization.
\newblock In {\em Conference On Learning Theory}, pages 774--792, 2018.

\bibitem{ahookhosh2019accelerated}
Masoud Ahookhosh.
\newblock Accelerated first-order methods for large-scale convex optimization:
  nearly optimal complexity under strong convexity.
\newblock {\em Mathematical Methods of Operations Research}, 89(3):319--353,
  2019.

\bibitem{ahookhosh2021high1}
Masoud Ahookhosh and Yurii Nesterov.
\newblock High-order methods beyond the classical complexity bounds, {II}:
  inexact high-order proximal-point methods with segment search.
\newblock {\em Technical report, CORE Discussion paper 2021, Universit{\'e}
  catholique de Louvain}, 2021.

\bibitem{ahookhosh2017optimal}
Masoud Ahookhosh and Arnold Neumaier.
\newblock Optimal subgradient algorithms for large-scale convex optimization in
  simple domains.
\newblock {\em Numerical Algorithms}, 76(4):1071--1097, 2017.

\bibitem{ahookhosh2018solving}
Masoud Ahookhosh and Arnold Neumaier.
\newblock Solving structured nonsmooth convex optimization with complexity
  $\mathcal{O}(\varepsilon^{-1/2})$.
\newblock {\em Top}, 26(1):110--145, 2018.

\bibitem{ahookhosh2019bregman}
Masoud Ahookhosh, Andreas Themelis, and Panagiotis Patrinos.
\newblock A {B}regman forward-backward linesearch algorithm for nonconvex
  composite optimization: superlinear convergence to nonisolated local minima.
\newblock {\em SIAM Journal on Optimization}, 31(1):653--685, 2021.

\bibitem{arjevani2019oracle}
Yossi Arjevani, Ohad Shamir, and Ron Shiff.
\newblock Oracle complexity of second-order methods for smooth convex
  optimization.
\newblock {\em Mathematical Programming}, 178(1-2):327--360, 2019.

\bibitem{baes2009estimate}
Michel Baes.
\newblock Estimate sequence methods: extensions and approximations.
\newblock {\em Institute for Operations Research, ETH, Z{\"u}rich,
  Switzerland}, 2009.

\bibitem{bauschke2016descent}
Heinz~H. Bauschke, J\'er\^ome Bolte, and Marc Teboulle.
\newblock A descent lemma beyond {L}ipschitz gradient continuity: first-order
  methods revisited and applications.
\newblock {\em Mathematics of Operations Research}, 42(2):330--348, 2016.

\bibitem{birgin2017worst}
Ernesto~G Birgin, JL~Gardenghi, Jos{\'e}~Mario Mart{\'\i}nez, Sandra~Augusta
  Santos, and Ph~L Toint.
\newblock Worst-case evaluation complexity for unconstrained nonlinear
  optimization using high-order regularized models.
\newblock {\em Mathematical Programming}, 163(1-2):359--368, 2017.

\bibitem{bolte2018first}
J\'er\^ome Bolte, Shoham Sabach, Marc Teboulle, and Yakov Vaisbourd.
\newblock First order methods beyond convexity and {L}ipschitz gradient
  continuity with applications to quadratic inverse problems.
\newblock {\em SIAM Journal on Optimization}, 28(3):2131--2151, 2018.

\bibitem{gasnikov2019optimal}
Alexander Gasnikov, Pavel Dvurechensky, Eduard Gorbunov, Evgeniya Vorontsova,
  Daniil Selikhanovych, and C{\'e}sar Uribe.
\newblock Optimal tensor methods in smooth convex and uniformly convex
  optimization.
\newblock In {\em Proceedings of the Thirty-Second Conference on Learning
  Theory}, pages 1374--1391, 2019.

\bibitem{grapiglia2020inexact}
Geovani~Nunes Grapiglia and Yu~Nesterov.
\newblock On inexact solution of auxiliary problems in tensor methods for
  convex optimization.
\newblock {\em Optimization Methods and Software}, pages 1--26, 2020.

\bibitem{guler1992newproximal}
Osman Güler.
\newblock New proximal point algorithms for convex minimization.
\newblock {\em SIAM Journal on Optimization}, 2(4):649--664, 1992.

\bibitem{iusem1994entropy}
Alfredo~N Iusem, Benar~Fux Svaiter, and Marc Teboulle.
\newblock Entropy-like proximal methods in convex programming.
\newblock {\em Mathematics of Operations Research}, 19(4):790--814, 1994.

\bibitem{jiang2019optimal}
Bo~Jiang, Haoyue Wang, and Shuzhong Zhang.
\newblock An optimal high-order tensor method for convex optimization.
\newblock In {\em Conference on Learning Theory}, pages 1799--1801, 2019.

\bibitem{lu2018relatively}
Haihao Lu, Robert~M. Freund, and Yurii Nesterov.
\newblock Relatively smooth convex optimization by first-order methods, and
  applications.
\newblock {\em SIAM Journal on Optimization}, 28(1):333--354, 2018.

\bibitem{martinet1970breve}
Bernard Martinet.
\newblock Br{\`e}ve communication. {R}{\'e}gularisation d'in{\'e}quations
  va\-ria\-tion\-nelles par approximations successives.
\newblock {\em Revue fran{\c{c}}aise d'informatique et de recherche
  op{\'e}rationnelle. S{\'e}rie rouge}, 4(R3):154--158, 1970.

\bibitem{martinet1972determination}
Bernard Martinet.
\newblock D{\'e}termination approch{\'e}e d’un point fixe d’une application
  pseudo-contractante.
\newblock {\em CR Acad. Sci. Paris}, 274(2):163--165, 1972.

\bibitem{nemirovsky1983problem}
Arkadii Nemirovsky and David Yudin.
\newblock {\em Problem Complexity and Method Efficiency in Optimization}.
\newblock John Wiley \& Sons, 1983.

\bibitem{nesterov2005smooth}
Yurii Nesterov.
\newblock Smooth minimization of non-smooth functions.
\newblock {\em Mathematical Programming}, 103(1):127--152, 2005.

\bibitem{nesterov2008accelerating}
Yurii Nesterov.
\newblock Accelerating the cubic regularization of newton’s method on convex
  problems.
\newblock {\em Mathematical Programming}, 112(1):159--181, 2008.

\bibitem{nesterov2013gradient}
Yurii Nesterov.
\newblock Gradient methods for minimizing composite functions.
\newblock {\em Mathematical Programming}, 140(1):125--161, 2013.

\bibitem{nesterov2015universal}
Yurii Nesterov.
\newblock Universal gradient methods for convex optimization problems.
\newblock {\em Mathematical Programming}, 152(1-2):381--404, 2015.

\bibitem{nesterov2018lectures}
Yurii Nesterov.
\newblock {\em Lectures on Convex Optimization}, volume 137.
\newblock Springer, 2018.

\bibitem{nesterov2019implementable}
Yurii Nesterov.
\newblock Implementable tensor methods in unconstrained convex optimization.
\newblock {\em Mathematical Programming}, pages 1--27, 2019.

\bibitem{nesterov2019inexact}
Yurii Nesterov.
\newblock Inexact basic tensor methods.
\newblock Technical report, Technical report, Technical Report CORE Discussion
  paper 2019, Universit{\'e} catholique de Louvain, 2019.

\bibitem{nesterov2020inexact}
Yurii Nesterov.
\newblock Inexact accelerated high-order proximal-point methods.
\newblock Technical report, Technical report, Technical Report CORE Discussion
  paper 2020, Universit{\'e} catholique de Louvain, 2020.

\bibitem{nesterov2020superfast}
Yurii Nesterov.
\newblock Superfast second-order methods for unconstrained convex optimization.
\newblock Technical report, Technical report, Technical Report CORE Discussion
  paper 2020, Universit{\'e} catholique de Louvain, 2020.

\bibitem{nesterov1994interior}
Yurii Nesterov and Arkadii Nemirovskii.
\newblock {\em Interior-Point Polynomial Algorithms in Convex Programming},
  volume~13.
\newblock SIAM, 1994.

\bibitem{rockafellar1976monotone}
R.~Tyrrell Rockafellar.
\newblock Monotone operators and the proximal point algorithm.
\newblock {\em SIAM Journal on Control and Optimization}, 14(5):877--898, 1976.

\bibitem{teboulle1992entropic}
Marc Teboulle.
\newblock Entropic proximal mappings with applications to nonlinear
  programming.
\newblock {\em Mathematics of Operations Research}, 17(3):670--690, 1992.

\end{thebibliography}

\end{document}